\DeclareMathOperator*{\argminB}{argmin}
\newcommand{\A}[0]{\mathcal{A}}
\newcommand{\Lam}[0]{\varLambda}
\newcommand{\Bf}[0]{\mathcal{B}}
\newcommand{\Lf}[0]{\mathcal{L}}
\newcommand{\Eps}[0]{\boldsymbol{\varepsilon}}
\newcommand{\Kappa}[0]{\boldsymbol{K}}
\newcommand{\ConstRel}[0]{\mathbb{C}\,}
\newcommand{\Mom}[0]{\boldsymbol{M}}
\newcommand{\Shear}[0]{\boldsymbol{Q}}
\newcommand{\Div}[0]{\mathrm{div}\,}
\newcommand{\Vn}[0]{V_{n}}
\newcommand{\mnn}[0]{M_{nn}}
\newcommand{\qn}[0]{Q_n}
\newcommand{\effs}[0]{V_n}
\newcommand{\mns}[0]{M_{ns}}
\newcommand{\msn}[0]{M_{sn}}
\newcommand{\Bh}[0]{\mathcal{B}_h}
\newcommand{\Lh}[0]{\mathcal{L}_h}
\newcommand{\osc}[0]{\mathrm{osc}}
\newcommand{\Ch}[0]{\mathcal{C}_h}
\newcommand{\Ehint}[0]{\mathcal{E}_h^I}
\newcommand{\sub}{R}
\newcommand{\vertiii}[1]{{\left\vert\kern-0.25ex\left\vert\kern-0.25ex\left\vert #1 
            \right\vert\kern-0.25ex\right\vert\kern-0.25ex\right\vert}}
\newtheorem{thm}{Theorem}
\newtheorem{lem}{Lemma}
\newtheorem{rem}{Remark}
\newtheorem{prob}{Problem}
\newtheorem{ass}{Assumption}
\numberwithin{equation}{section}
\journalname{BIT}
\begin{document}

\title{A stabilised finite element method for the plate obstacle problem\thanks{Funding from Tekes (Decision number 3305/31/2015), the Finnish Cultural Foundation,
the Portuguese Science Foundation (FCOMP-01-0124-FEDER-029408) and the Finnish Society of Science and Letters is greatly acknowledged.
}}


\author{Tom Gustafsson       \and
        Rolf Stenberg \and Juha Videman
}


\institute{Tom Gustafsson  \at
            Department of Mathematics and Systems Analysis,
Aalto University, P.O. Box 11100, 00076 Aalto, Finland\\
              \email{tom.gustafsson@aalto.fi}           
           \and
           Rolf Stenberg \at
             Department of Mathematics and Systems Analysis,
Aalto University, P.O. Box 11100, 00076 Aalto, Finland\\
    \email{rolf.stenberg@aalto.fi}   
    \and
    Juha Videman \at
    CAMGSD/Departamento de Matem\'atica, Instituto Superior T\'ecnico, Universidade   de Lisboa, Av. Rovisco Pais 1, 1049-001 Lisboa, Portugal \\
      \email{jvideman@math.tecnico.ulisboa.pt}
}

\date{Received: date / Accepted: date}

\maketitle

\begin{abstract}
We introduce a stabilised finite element formulation for the
Kirchhoff plate obstacle problem and derive both a priori and residual-based a posteriori error
estimates using conforming $C^1$-continuous finite elements. We implement the method as a Nitsche-type scheme and
give numerical evidence for its effectiveness in the case of an elastic and a rigid
obstacle.
\keywords{Obstacle problem \and Kirchhoff plate \and stabilised FEM \and a posteriori estimate \and Nitsche's method}
\subclass{65N30 \and 65K15 \and 74S05}
\end{abstract}

\section{Introduction}
The goal of this paper is to introduce a stabilised finite element method 
for the obstacle problem of clamped Kirchhoff plates and perform an a priori and a posteriori error analysis based on conforming finite element approximation of the displacement field. 
To our knowledge, stabilised  $C^1$-continuous finite elements have not been previously analysed for fourth-order
obstacle problems. Moreover,  
only a few articles exist on the a posteriori error analysis of fourth-order
obstacle problems (cf. \cite{GudiPorwal2016,Brenneretal2017}) and none on
conforming $C^1$-continuous finite elements, most probably due to the limited
regularity of the underlying continuous problem. Here, we consider a stabilised  method
based on a
saddle point formulation which introduces the contact force as an
additional unknown (Lagrange multiplier).  We establish an a priori estimate with minimal regularity assumptions and derive residual-based a posteriori
error estimators. The Lagrange multiplier formulation
has the advantage of providing an approximation for the contact force
and the unknown contact domain. Moreover, it can easily be implemented as
a Nitsche-type method with only the primal displacement variable as an unknown in the
resulting linear system.

In a recent paper~\cite{GSV16}, we considered two families of finite
element methods for a second-order obstacle problem using a
Lagrange multiplier formulation for including the obstacle constraint. The first 
 was a family of mixed finite element methods  for which the discrete spaces need to
satisfy the Babu$\check{\rm s}$ka--Brezzi condition. This was achieved by  using ``bubble'' degrees of freedom. The second
 was a family of stabilised methods for which the stability is guaranteed, for all finite element space pairs, by
adding properly weighted residual terms to the discrete formulation.  In the analysis of the stabilised
formulation, we made use of recently developed tools  for the 
 Stokes problem \cite{SV15}.

In \cite{GSV16}, the analysis was focused on the membrane obstacle problem.
The approach followed  is, however,  quite general and should thus, up to some modifications, be extendable
to  other problems. In this paper, we consider conforming $C^{1}$-continuous elements for 
clamped Kirchhoff plates constrained by a rigid or elastic obstacle. This kind of
elements are rather complicated to work with and hence it does not seem reasonable to add
artificial bubble degrees of freedom, in particular since the bubbles should
belong to $H^{2} _{0}(K)$ at each element $K$. Therefore, we only address a
stabilised formulation.

Numerical approximation of fourth-order obstacle-type problems has been
previously studied in \cite{FusciardiScarpini80,Glowinskietal84,Scholtz87,Brenneretal2012a,Brenneretal2012b,Brenneretal2013,Brenneretal2017}.  In
\cite{FusciardiScarpini80,Glowinskietal84} the authors considered mixed finite
element methods and presented general convergence theorems without convergence
rates. In \cite{Scholtz87}, it was shown that using the penalty method and
piecewise quadratic elements, the method converges with the (suboptimal)  rate
of $h^{1/3}$ in the energy norm. Brenner {et al.}~\cite{Brenneretal2012a}
made a unified a priori error analysis for classical conforming and
non-conforming ($C^1$-continuous and $C^0$-continuous) finite element methods
(see, {e.g.},  \cite{CiarletBook}) as well as for  the $C^0$ interior
penalty methods and showed $\mathcal{O}(h)$ convergence rate for all methods in
convex domains, see also \cite{Brenneretal2012b,Brenneretal2013} for some
generalisations. The only existing a posteriori analyses on fourth-order
obstacle-type problems are due to Brenner {et al.}~\cite{Brenneretal2017}
and Gudi and Porwal~\cite{GudiPorwal2016}, both performed on the $C^0$ interior
penalty methods. In \cite{GudiPorwal2016}, the authors also derive a priori
error estimates with minimal regularity assumptions using the techniques
developed by Gudi in \cite{Gudi2010} much in the same spirit as we do here, see
also \cite{SV15,GSV16}.

All the above mentioned papers address the problem with a rigid obstacle. For
the plate bending problem with an elastic obstacle, we refer to
\cite{TosoneMaceri2003} for general convergence results in a mixed formulation
and to \cite{Hanetal2006} for optimal a priori estimates for conforming and
non-conforming methods in the primal formulation.

 The paper is organised as follows. In
Section~\ref{sec:cont}, we formulate the continuous problem  and
show its stability. In Section~\ref{sec:fem}, we define the stabilised finite
element method and establish a  discrete stability estimate as well as a priori
and a posteriori error estimates. In Section~\ref{sec:nitsche}, we derive
the corresponding Nitsche's method and discuss its implementation. Finally, in
Section~\ref{sec:numerical}, we report results of numerical computations on two example problems. 
In Sections 2 and 3, we will shorten (or omit) derivations that can be
inferred from our work on the Kirchhoff plate source problem~\cite{GSV18} and on the membrane obstacle
problem~\cite{GSV16}.


\section{The continuous problem}

\label{sec:cont}

Let us first recall the  Kirchhoff--Love theory for thin plates (see, {e.g.}, \cite{FS}).
We denote the infinitesimal strain tensor  as
\begin{equation}
    \Eps(\boldsymbol{v})=\frac12(\nabla \boldsymbol{v}+\nabla \boldsymbol{v}^T), \quad \forall \boldsymbol{v} \in \mathbb{R}^2,
\end{equation}
and consider the following isotropic linear elastic constitutive relationship, valid under plane stress conditions,
\begin{equation}
    \ConstRel \boldsymbol{A} = \frac{E}{1+\nu}\left( \boldsymbol{A} + \frac{\nu}{1-\nu} (\text{tr}\,\boldsymbol{A}) \boldsymbol{I}\right), \quad \forall \boldsymbol{A} \in \mathbb{R}^{2 \times 2},
\end{equation}
where $E$ and $\nu$ are the Young's modulus and the Poisson ratio. Letting $u$ stand for the deflection of the mid-surface of the plate and $d$ for the plate's thickness, the curvature $\Kappa$ and the bending moment $\Mom$ are defined as
\begin{equation}
    \Kappa(u) = -\Eps(\nabla u), \qquad \Mom(u) = \frac{d^3}{12} \ConstRel \Kappa(u).
\end{equation}
Assume that $\Omega \subset \mathbb{R}^2$ is a   polygonal domain occupied by  (the mid-surface of) the thin plate.
Since our interest lies in the obstacle problem, we will consider only clamped boundary conditions. The strain energy corresponding to a displacement $v$ of the plate
is 
$\frac{1}{2} a(v,v)$, with 
\begin{equation}
a(w,v) = \int_\Omega \Mom(w) : \Kappa(v) \,\mathrm{d}x.
\end{equation}
The displacement is constrained by an obstacle, denoted by $g$, which is allowed to be either rigid or elastic. The energy resulting from contact with an elastic obstacle can be written as
\begin{equation}
\frac{1}{2\epsilon} \int_{\Omega}(u-g)_-^2 \,\mathrm{d}x,
\end{equation}
where $\epsilon>0$ is the inverse of an appropriately scaled "spring constant" and 
\[
(u-g)_ {-} = \min\{u-g,0\}.
\]
The loading consists of a distributed load $f \in L^2(\Omega)$ with the potential energy
\begin{equation}
l(v)= \int_\Omega fv  \,\mathrm{d}x.
\end{equation}
The total energy thus reads as
\begin{equation}I(v)=\frac{1}{2} a(v,v) + \frac{1}{2\epsilon} \int_{\Omega}(v-g)_-^2 \,\mathrm{d}x- l(v).
\end{equation}

The space of kinematically admissible displacements is denoted by
$
V=H^2_0(\Omega).
$ The displacement function $u$ is thus obtained by minimising the energy, viz.
\begin{equation}
I(u) \leq I(v) \quad \forall v\in V,
\end{equation}
or by solving the weak formulation: Find $u\in V$ such that 
\begin{equation}  a(u,v) + \frac{1}{\epsilon} \big((u-g)_{-},v\big)  = l(v), \quad \forall v\in V,
\label{firstweak}
\end{equation}
where $(\cdot,\cdot) $ is the usual $L^2(\Omega)$ inner product.
The reaction force   between the obstacle and the plate  is given by 
\begin{equation}\label{lagdef}
\lambda=  -\frac{1}{\epsilon} (u-g)_{-}.
\end{equation}
In the limit $\epsilon\to 0$, the obstacle becomes rigid and the  problem reduces to that of  constrained minimisation
\begin{equation}\label{constprob}
u=\argminB_{v\in K} \Big[  \frac{1}{2} a(v,v)-l(v) \Big],
\end{equation}
with 
\begin{equation}\label{constraint}
K=\{ \, v\in H^2_0(\Omega) : v\geq g \ \mbox{in} \ \Omega\,\}.
\end{equation}
At the same time, the reaction force  $\lambda$ converges to the Lagrange multiplier associated with the constraint $v\geq g$.

The plate obstacle problem can be investigated based on the variational inequality formulation of problem \eqref{constprob} (see \cite{Scholtz87,Brenneretal2012a,Brenneretal2012b,Brenneretal2013}): Find $u\in K$ such that 
\begin{equation}
a(u, v-u)\geq l(v-u), \quad \forall v\in K.
\end{equation}
Here, we rewrite the problem using $\lambda$ as an independent
unknown to obtain a perturbed saddle point problem. From \eqref{lagdef} it
follows that the reaction force is non-negative, i.e. it belongs to  the set
\begin{equation}
    \Lam = \{ \mu \in Q : \langle v, \mu \rangle \geq 0~\forall v \in V  \text{~s.t.} \ v \geq 0 \text{~a.e.~in $\Omega$}\},
\end{equation}
where the function space for the Lagrange multiplier  is defined as
\begin{equation}
    Q = \begin{cases}
        L^2(\Omega), & \text{if $\epsilon>0$}, \\
        H^{-2}(\Omega), & \text{if $\epsilon=0$},
    \end{cases}
\end{equation}
and $\langle \cdot,\cdot \rangle : Q^\prime \times Q \rightarrow \mathbb{R}$ denotes the duality
pairing.
We denote by $\|\cdot\|_k$ the usual norm in the Hilbert space  $H^k(\Omega), k\in\mathbb{N}$, let $\|\cdot\|_0$ be the norm in $L^2(\Omega)$ and equip the space $ H^{-2}(\Omega)=[H^2_0(\Omega)]^\prime$  with the norm
\begin{equation}
    \|\xi\|_{-2} = \sup_{v \in V} \frac{\langle v, \xi \rangle}{\|v\|_2}.
    \label{negatnorm}
\end{equation}
Note that since the Lagrange multiplier in general belongs to $ H^{-2}(\Omega)$ in case of a rigid obstacle, the obstacle $g$ and the load $f$ could be such that the contact domain reduces to a point (or a finite number of points).

Under appropriate smoothness assumptions, the solution to the plate bending problem over a rigid obstacle is in $H^{3}_{\rm loc}(\Omega)\cap C^2(\Omega)$, in convex domains in $H^{3}(\Omega)$, cf.~\cite{Frehse71,CaffarelliFriedman79}, but it cannot belong to $H^4(\Omega)$. The exact solutions given in \cite{Brenneretal2013,Aleksanyan2016} seem to indicate that  the smoothness threshold is $C^{2,1/2}(\Omega)$ or $H^{7/2-\varepsilon}(\Omega), \varepsilon>0$. The solution to the clamped plate bending problem is more regular if the obstacle is elastic. In fact, assuming that the obstacle and the loading term are in $L^2(\Omega)$, the regularity of the solution is determined by the regularity of the source problem, cf.~\cite{Hanetal2006}. In particular, the solution belongs to $H^4(\Omega)$ if the interior angles of the domain $\Omega$ are smaller than $\approx126.284^\circ$, cf. \cite{BlumRannacher80}.

Formulation (\ref{firstweak}) can be written as: Find $u\in V$ and 
$\lambda \in \Lam$ such that 
\begin{align}
    a(u,v) - \langle v, \lambda \rangle &= l(v),  \quad \forall v\in V,
     \label{weakform1}  \\
    \langle u - g + \epsilon \lambda, \mu - \lambda \rangle &\geq 0, \quad \quad  \forall \mu \in \Lam.
   \label{weakform2} \end{align}
    
The stabilised finite element method exploits the strong form of the equations which we recall next. The static variables,   the moment tensor $\Mom$ and the  
shear force $\Shear$, satisfy the following equilibrium equations which, due to the loading and the Lagrange multiplier, have to be interpreted in the sense of distributions 
\begin{equation}
    \boldsymbol{\Div} \Mom(u) = \Shear(u), \qquad - \Div \Shear(u)-\lambda = l.
\end{equation}
A simple elimination leads to the equation
\begin{equation}
    \A(u) -\lambda = l,
\end{equation}
with the biharmonic operator $ \A(u)$ given by
\begin{equation}
    \A(u) := D \Delta^2 u,
\end{equation}
where  $D$ stands for the   bending stiffness defined through
\begin{equation}
    D = \frac{E d^3}{12(1-\nu^2)}.
\end{equation}

 The strong form of  problem (\ref{weakform1})-(\ref{weakform2}) is thus: Find $u$ and $\lambda$ such that
\begin{align}
    \left.
    \begin{aligned}
                  \mathcal{A}(u) - \lambda &= l\\
                                   \lambda &\geq 0 \\
        \frac{1}{\epsilon}(u-g)+\lambda &\geq 0 \\
        \lambda\left(\frac{1}{\epsilon}(u-g)+\lambda\right) &= 0
           \label{strongform}
    \end{aligned}
    \quad \right\} \quad & \text{in $\Omega$,} \\[0.3cm]
        u=0\  \mbox{ and } \  
        \frac{\partial u}{\partial n} =0 \quad \mbox{ on } \partial \Omega.
 \end{align}
 
\begin{rem}
In case of a rigid obstacle, the first two equations in $(\ref{strongform})$  remain the same and the last two reduce to
\[
u-g\geq 0 \ \ {\rm in}\ \Omega, \qquad  \lambda(u-g) = 0  \ \ {\rm in}\ \Omega.
\]
\end{rem}

Defining the  bilinear and linear forms $\Bf:(V \times Q)\times(V \times Q)\rightarrow \mathbb{R}$ and \mbox{$\Lf:V \times Q\rightarrow \mathbb{R}$} through
 \begin{align}
    \Bf(w,\xi;v,\mu) &= a(w,v)-\langle v, \xi \rangle - \langle w, \mu \rangle - \epsilon \langle \xi, \mu \rangle, \\
    \Lf(v,\mu) &= (f,v) - \langle g, \mu \rangle,
\end{align}
the variational problem (\ref{weakform1})-(\ref{weakform2})  can be reformulated as 
\begin{prob}[Variational formulation]
Find $(u,\lambda) \in V \times \Lam$ such that
\begin{equation}
    \Bf(u,\lambda;v,\mu-\lambda) \leq \Lf(v,\mu-\lambda) \quad \forall (v,\mu) \in V \times \Lam.
    \label{bigvarform}
\end{equation}
\end{prob}

In the sequel, we will use the following norm in $V\times Q$
\begin{equation}
    \vertiii{(w,\xi)} = \big(\|w\|_2^2 + \|\xi\|_{-2}^2 + \epsilon \, \|\xi\|_0^2\big)^{1/2},
\end{equation}
with respect to which the bilinear form $\Bf$ is continuous.  Moreover, we write $a \gtrsim b$ (or $a \lesssim b$) when $a \geq C b$ (or $a \leq C b$) for some positive constant $C$ independent of the finite element mesh and of the parameter $\epsilon$.

\begin{thm}[Continuous stability]
    For every $(v,\mu) \in V \times Q$ there exists $w \in V$ such that
    \begin{equation}
        \Bf(v,\mu;w,-\mu) \gtrsim \vertiii{(v,\mu)}^2 \quad \text{and} \quad \|w\|_2 \lesssim \vertiii{(v,\mu)}.
        \label{contstab}
    \end{equation}
\end{thm}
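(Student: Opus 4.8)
The plan is to establish the inf-sup type condition \eqref{contstab} by constructing the test function $w$ as a suitable combination of $v$ itself and a lifting of the multiplier $\mu$. First I would exploit the coercivity of $a(\cdot,\cdot)$ on $V = H^2_0(\Omega)$: by Korn-type/Poincar\'e inequalities for the clamped plate, $a(v,v) \gtrsim \|v\|_2^2$, so the choice $w = v$ already controls the $\|v\|_2^2$ term in the triple norm, since
\begin{equation*}
\Bf(v,\mu;v,-\mu) = a(v,v) - \langle v,\mu\rangle + \langle v,\mu\rangle + \epsilon\langle\mu,\mu\rangle = a(v,v) + \epsilon\|\mu\|_0^2 \gtrsim \|v\|_2^2 + \epsilon\|\mu\|_0^2.
\end{equation*}
This handles two of the three contributions to $\vertiii{(v,\mu)}^2$ at once, for free; what is missing is control of $\|\mu\|_{-2}^2$.

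To recover the negative norm of $\mu$, I would use the definition \eqref{negatnorm}: pick $z \in V$ with $\|z\|_2 = 1$ (or $\|z\|_2 \le 1$) such that $\langle z, \mu\rangle \ge \tfrac12\|\mu\|_{-2}$, and test with $w = v - \delta\|\mu\|_{-2}\, z$ for a small parameter $\delta > 0$ to be fixed. Computing,
\begin{equation*}
\Bf(v,\mu;w,-\mu) = a(v,v) + \epsilon\|\mu\|_0^2 - \delta\|\mu\|_{-2}\,a(v,z) + \delta\|\mu\|_{-2}\langle z,\mu\rangle,
\end{equation*}
where I have used that the cross terms $-\langle v,\mu\rangle$ and $+\langle v,\mu\rangle$ cancel and the extra $-\langle w,\mu\rangle$ piece picks up $+\delta\|\mu\|_{-2}\langle z,\mu\rangle \ge \tfrac12\delta\|\mu\|_{-2}^2$. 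The troublesome term is $-\delta\|\mu\|_{-2}\,a(v,z)$, which by continuity of $a$ and $\|z\|_2 \le 1$ is bounded below by $-C\delta\|\mu\|_{-2}\|v\|_2$; applying Young's inequality, $C\delta\|\mu\|_{-2}\|v\|_2 \le \tfrac14\|v\|_2^2 + C^2\delta^2\|\mu\|_{-2}^2$. Choosing $\delta$ small enough that $C^2\delta^2 \le \tfrac14\delta$, i.e. $\delta \le 1/(4C^2)$, we obtain
\begin{equation*}
\Bf(v,\mu;w,-\mu) \gtrsim \|v\|_2^2 + \epsilon\|\mu\|_0^2 + \delta\|\mu\|_{-2}^2 \gtrsim \vertiii{(v,\mu)}^2,
\end{equation*}
with all constants independent of $\epsilon$ and the mesh.

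Finally I would verify the norm bound on $w$: since $w = v - \delta\|\mu\|_{-2}z$ with $\|z\|_2 \le 1$ and $\delta$ a fixed absolute constant, the triangle inequality gives $\|w\|_2 \le \|v\|_2 + \delta\|\mu\|_{-2} \lesssim \vertiii{(v,\mu)}$, as required. I do not anticipate a serious obstacle here; the only point needing mild care is that the supremum in \eqref{negatnorm} is attained only approximately, so one works with an $\eta$-almost maximiser $z$ (or normalises $\|z\|_2 = 1$ and scales), and that the coercivity constant of $a$ and the continuity constant of $a$ are both $\epsilon$-independent, which they are since $a$ does not involve $\epsilon$. The main conceptual step is simply recognising that the perturbation term $-\epsilon\langle\xi,\mu\rangle$ in $\Bf$ has the right sign when tested against $(w,-\mu)$, so it contributes positively rather than fighting the estimate.
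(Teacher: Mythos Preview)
Your argument is correct. The approach differs from the paper's in the way the dual norm of $\mu$ is recovered. The paper defines the Riesz representative $p\in V$ via $a(p,q)=\langle q,\mu\rangle$ for all $q\in V$, so that $\|p\|_2\simeq\|\mu\|_{-2}$, and then sets $w=v-p$. The key advantage is the exact identity $\langle v,\mu\rangle=a(v,p)$, which turns $a(v,v)-\langle v,\mu\rangle+\langle p,\mu\rangle$ into $\tfrac12 a(v,v)+\tfrac12 a(v-p,v-p)+\tfrac32 a(p,p)$ by completing the square; no small parameter $\delta$ or Young's inequality is needed, and coercivity is applied only at the very end. Your construction instead takes a near-maximiser $z$ of the supremum defining $\|\mu\|_{-2}$ and scales it by $\delta\|\mu\|_{-2}$; since you only have $\langle z,\mu\rangle\ge\tfrac12\|\mu\|_{-2}$ rather than an identity tying $a(v,z)$ to $\langle v,\mu\rangle$, the cross term $a(v,z)$ must be absorbed via Young, which forces the extra parameter.

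Both routes are standard for saddle-point stability; the paper's is slightly cleaner algebraically, while yours is more elementary in that it avoids invoking a Riesz-type auxiliary problem. One small point to tidy: the constant $\tfrac14$ in your Young step should be tuned to the coercivity constant of $a$ (so that $a(v,v)-c\|v\|_2^2$ stays positive), but since you conclude with ``$\gtrsim$'' and ``$\delta$ small enough'' this is cosmetic rather than a gap.
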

\begin{proof}
	Define $p \in V$ through
	\begin{equation}
		a(p,q) = \langle q, \mu \rangle \quad \forall q \in V.
	\end{equation}
	From the continuity of the bilinear form $a$ it follows that
	\begin{equation}
		\frac{\langle q, \mu \rangle}{\|q\|_2} = \frac{a(p,q)}{\|q\|_2} \lesssim \|p\|_2 \quad \forall q \in V.
	\end{equation}
	Since $q$ is arbitrary, we have
	\begin{equation}
		\label{cstab:dir1}
		\|\mu\|_{-2} = \sup_{q \in V} \frac{\langle q, \mu \rangle}{\|q\|_2} \lesssim \|p\|_2.
	\end{equation}
	Moreover, the coercivity of the bilinear form $a$ gives
	\begin{equation}
		\label{cstab:dir2}
		\|p\|_2^2 \lesssim a(p,p) = \langle p, \mu \rangle \leq \|\mu\|_{-2} \|p\|_2 \quad \Rightarrow \quad \|p\|_2 \lesssim \|\mu\|_{-2}.
	\end{equation}
   Choosing $w = v-p$, noting that
   \begin{align*}
    \Bf(v,\mu;v-p,-\mu) & = a(v,v)-\langle v, \mu \rangle+\langle p, \mu \rangle+\epsilon \langle \mu, \mu \rangle \\ 
    & = \frac{1}{2} \big( a(v,v)+a(p,p)\big) + \frac{1}{2} a(v-p,v-p)+a(p,p) +\epsilon\langle\mu,\mu\rangle 
    \end{align*}
       and applying inequalities \eqref{cstab:dir1} and \eqref{cstab:dir2} proves the result.
\end{proof}

\section{The finite element method}

\label{sec:fem}

Let $\Ch$ be a conforming shape regular triangulation of $\Omega$ which we assume to be polygonal.
The finite element subspaces are
\begin{equation}
    V_h \subset V, \quad Q_h \subset Q.
\end{equation}
Moreover, we define
\begin{equation}
    \Lam_h = \{ \mu_h \in Q_h : \mu_h \geq 0 \text{~in $\Omega$}\} \subset \Lam.
\end{equation}
Let us introduce the stabilised bilinear and linear forms $\Bh$ and $\Lh$ by
\begin{align}
    \Bh(w,\xi;v,\mu) &= \Bf(w,\xi;v,\mu) - \alpha \sum_{K \in \Ch} h_K^4(\A(w)-\xi,\A(v)-\mu)_K, \\
    \Lh(v,\mu) &= \Lf(v,\mu) - \alpha \sum_{K \in \Ch} h_K^4(f,\A(v)-\mu)_K,
\end{align}
where $\alpha > 0$ is the stabilisation parameter.
\begin{prob}[The stabilised method]
    \label{prob:stab}
    Find $(u_h,\lambda_h) \in V_h \times \Lam_h$ such that
    \begin{equation}
        \Bh(u_h,\lambda_h;v_h,\mu_h-\lambda_h) \leq \Lh(v_h,\mu_h-\lambda_h) \quad \forall (v_h,\mu_h) \in V_h \times \Lam_h.
        \label{stabform}
    \end{equation}
\end{prob}
For the existence of a unique solution to Problem \ref{prob:stab}, see, {\sl e.g.}, \cite{BHR78}.

Let us define the  mesh-dependent norms
\begin{align}
    \|\xi_h\|_{-2,h}^2 &= \sum_{K \in \Ch} h_K^4 \|\xi_h\|_{0,K}^2, \\
    \vertiii{(w_h,\xi_h)}_h^2 &= \|w_h\|_2^2 + \|\xi_h\|_{-2}^2+ \|\xi_h\|_{-2,h}^2 + \epsilon \|\xi_h\|_0^2,
\end{align}
and recall the following  estimate.
\begin{lem}[Inverse inequality]
    There exists $C_I>0$ such that
    \begin{equation}
        C_I \|\A(w_h)\|_{-2,h}^2 \leq a(w_h,w_h) \quad \forall w_h \in V_h.
            \label{inverse}
    \end{equation}
\end{lem}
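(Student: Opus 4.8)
The plan is to establish the inverse inequality \eqref{inverse} elementwise and then sum over the triangulation. Fix $w_h \in V_h$ and an element $K \in \Ch$. Since the restriction $w_h|_K$ is a polynomial of fixed degree (say $p$) on a shape-regular triangle, the operator $\A(w_h)|_K = D\Delta^2 w_h|_K$ is also a polynomial of degree at most $p$, and on a finite-dimensional polynomial space all norms are equivalent. Working on a reference element $\hat{K}$ and scaling, a standard inverse estimate gives
\[
h_K^4 \, \|\A(w_h)\|_{0,K}^2 = h_K^4 D^2 \|\Delta^2 w_h\|_{0,K}^2 \lesssim \|D^2 w_h\|_{0,K}^2 \lesssim a(w_h,w_h)|_K,
\]
where the last step uses that the bilinear form $a(\cdot,\cdot)$ restricted to $K$ controls the full Hessian seminorm (the constitutive tensor $\ConstRel$ is coercive, so $\Mom(w_h):\Kappa(w_h) \gtrsim |\Kappa(w_h)|^2 = |D^2 w_h|^2$). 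The scaling works out because $\Delta^2$ involves four derivatives and each differentiation costs a factor $h_K^{-1}$ under the change of variables, so four derivatives give $h_K^{-4}$, exactly cancelled by the $h_K^4$ weight against two derivatives; the residual $h_K^2$ mismatch is absorbed since the reference inverse inequality bounds fourth derivatives by second derivatives with a constant.

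Summing the elementwise bounds over $K \in \Ch$ and recalling the definition $\|\A(w_h)\|_{-2,h}^2 = \sum_K h_K^4 \|\A(w_h)\|_{0,K}^2$ together with $a(w_h,w_h) = \sum_K a(w_h,w_h)|_K$ yields $\|\A(w_h)\|_{-2,h}^2 \lesssim a(w_h,w_h)$, which is \eqref{inverse} with $C_I$ the reciprocal of the hidden constant. The constant depends only on the polynomial degree $p$, the shape-regularity parameter of $\Ch$, the stabilisation parameter $\alpha$ is not involved here, and the material constants $E,\nu,d$; crucially it is independent of $h$ and of $\epsilon$, as required by the $\gtrsim$/$\lesssim$ convention.

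The main obstacle — really the only subtlety — is the change-of-variables bookkeeping for a fourth-order operator on an affine-equivalent family: one must verify that mapping $K$ to $\hat{K}$ via an affine map $F_K$ with $\|DF_K\| \sim h_K$ and $\|DF_K^{-1}\| \sim h_K^{-1}$ (valid by shape-regularity) transforms $\|\Delta^2 w_h\|_{0,K}^2$ correctly, picking up $h_K^{-8}$ from the eight derivatives in $\|\Delta^2 w_h\|^2$ against $h_K^{+2}$ from the Jacobian of the volume element, and that $\|D^2 w_h\|_{0,K}^2$ picks up $h_K^{-4}$ from four derivatives against $h_K^{+2}$ from the volume element — the net ratio being $h_K^{-4}$, precisely the weight $h_K^{4}$ supplies. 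A clean way to sidestep repeating this is simply to cite the analogous inverse estimate already used for the Kirchhoff plate source problem in \cite{GSV18}, since $V_h$ and the operator $\A$ are unchanged; the obstacle term plays no role in a purely discrete-space inequality. I would present the short reference-element argument and append the citation.
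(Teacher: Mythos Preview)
Your argument is the standard scaling proof and is correct. The paper itself does not prove this lemma at all: it simply ``recalls'' the estimate without argument, so there is no paper proof to compare against. Your suggestion at the end --- to cite the identical inverse estimate from the Kirchhoff plate source problem \cite{GSV18} --- is exactly in the spirit of what the paper does, and your explicit reference-element argument supplies the details the paper omits. One cosmetic slip: $\A(w_h)|_K$ is a polynomial of degree at most $p-4$, not $p$, but this is immaterial since the inverse estimate only needs that it lies in a fixed finite-dimensional space on $\hat K$.
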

    The inverse estimate of the following lemma is valid in an arbitrary piecewise polynomial finite element space $Q_h$.
\begin{lem} It holds that
\begin{equation} 
\|\xi_h\|_{-2,h}  \lesssim \|\xi_h \|_{-2} \ \ \forall \xi_h\in Q_h\, .
\label{neginverse}
\end{equation}
\end{lem}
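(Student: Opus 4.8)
The plan is to estimate the negative norm from below by testing it against a bubble‑weighted multiple of $\xi_h$ and then to exploit the locality of the bubble functions. First I would fix, on the reference triangle $\hat K$, a bubble $\hat b\in H^2_0(\hat K)$ which is strictly positive in the interior of $\hat K$ — for instance the sextic $\hat b=(\hat\lambda_1\hat\lambda_2\hat\lambda_3)^2$ written in barycentric coordinates, which vanishes together with its gradient on $\partial\hat K$ — and transport it affinely to each element $K\in\Ch$, obtaining $b_K\in H^2_0(K)$. Note that the familiar cubic bubble is \emph{not} admissible here, since its normal derivative does not vanish on $\partial K$; this is the one point that requires care. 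Extended by zero outside $K$, such a $b_K$ lies in $V=H^2_0(\Omega)$, and hence so does any finite sum $\sum_{K\in\Ch}c_K b_K\xi_h$, the summands having pairwise disjoint supports.

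Next I would record two scaling estimates, valid for any polynomial $q$ of the fixed degree $r$ of the space $Q_h$ on $K$, with constants depending only on $r$ and the shape regularity of $\Ch$:
\begin{equation*}
\int_K b_K\,q^2\,\mathrm{d}x\ \gtrsim\ \|q\|_{0,K}^2,\qquad \|b_K q\|_{2,K}\ \lesssim\ h_K^{-2}\,\|q\|_{0,K}.
\end{equation*}
The first follows by pulling back to $\hat K$, where $(\hat p,\hat q)\mapsto\int_{\hat K}\hat b\,\hat p\,\hat q$ is an inner product on the finite‑dimensional space of polynomials of degree $\le r$ and is therefore equivalent to the $L^2(\hat K)$ inner product, together with $|K|\sim h_K^2$; the second is the standard polynomial inverse inequality on a shape‑regular element combined with $\|b_K\|_{\infty}\lesssim 1$.

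Finally I would choose the global test function $w=\sum_{K\in\Ch}h_K^4\,b_K\,\xi_h\in V$. Using the disjoint supports and the first estimate,
\begin{equation*}
\langle w,\xi_h\rangle=\sum_{K\in\Ch}h_K^4\int_K b_K\,\xi_h^2\,\mathrm{d}x\ \gtrsim\ \sum_{K\in\Ch}h_K^4\|\xi_h\|_{0,K}^2=\|\xi_h\|_{-2,h}^2,
\end{equation*}
while the second estimate and the disjoint supports give $\|w\|_2^2=\sum_{K\in\Ch}h_K^8\|b_K\xi_h\|_{2,K}^2\lesssim\sum_{K\in\Ch}h_K^4\|\xi_h\|_{0,K}^2=\|\xi_h\|_{-2,h}^2$. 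Combining these with the definition \eqref{negatnorm} of $\|\cdot\|_{-2}$ yields
\begin{equation*}
\|\xi_h\|_{-2}\ \geq\ \frac{\langle w,\xi_h\rangle}{\|w\|_2}\ \gtrsim\ \frac{\|\xi_h\|_{-2,h}^2}{\|\xi_h\|_{-2,h}}=\|\xi_h\|_{-2,h},
\end{equation*}
which is the assertion. Everything except the choice of an $H^2_0$‑conforming bubble is routine scaling; once the bubble is fixed, no further difficulties arise, and in particular the argument is insensitive to whether $Q_h$ consists of continuous or fully discontinuous piecewise polynomials.
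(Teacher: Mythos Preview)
Your proof is correct and is essentially the same as the paper's: the paper also uses the sextic bubble $b_K=(\lambda_{1,K}\lambda_{2,K}\lambda_{3,K})^2$, takes the test function $v_h|_K=h_K^4 b_K\xi_h|_K$, and invokes norm equivalence and inverse estimates to obtain $(v_h,\xi_h)\gtrsim\|\xi_h\|_{-2,h}^2$ and $\|v_h\|_2\lesssim\|\xi_h\|_{-2,h}$. Your write-up is somewhat more explicit about the scaling arguments, but the idea and the test function are identical.
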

\begin{proof} Let $b_K\in P_6(K)$ be the sixth order bubble function
   \begin{equation}
        b_K = (\lambda_{1,K} \lambda_{2,K} \lambda_{3,K})^2 ,
    \label{bubble}
    \end{equation}
    where $\lambda_{j,K}, j\in\{1,2,3\}$, denote the barycentric coordinates for $K\in\Ch$, and
define the auxiliary space 
\[
W_h=\{ \, v_h\in H^2_0(\Omega)\, \vert \, v_h\vert _K=b_K \xi_h \vert_K,  \ \xi_h\in Q_h\, \}.
\]

Given  $\xi\in Q_h$, we now define $v_h\in W_h$ by
\[
v_h\vert _K =h_K^4 b_K \xi_h\vert _K, \quad K\in \Ch.
\]
From the norm equivalence and the inverse estimates, it follows that
\[
(v_h,\xi_h)\gtrsim \|\xi_h\|_{-2,h} ^2
\]
and
\[
\Vert v_h\Vert_2 \lesssim \vert v_h\vert_2\lesssim \|\xi_h\|_{-2,h} .
\]
Therefore
\[
\|\xi_h\|_{-2,h}  \lesssim \frac{(v_h,\xi_h)}{ \Vert v_h\Vert_2}
\]
and the assertion follows from the definition of the negative norm \eqref{negatnorm}. 
\end{proof}

For the proof of the following result, we refer to \cite{GSV16} (with minor modifications). 
\begin{lem}\label{pvtrick} There exist positive constants $C_1$ and $C_2$ such that 
 \begin{equation}\label{aux}
 \sup_{v_h\in V_h} \frac{\langle  v_h, \xi_h\rangle   }{\Vert   v_h\Vert_{2}} \ge C_1 \Vert \xi_h\Vert_{-2} -C_2 \Vert \xi_{h}\Vert_{-2,h} \quad \forall \xi_h \in Q_h.
 \end{equation}
 \end{lem}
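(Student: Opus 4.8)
The plan is to reduce the estimate to the very definition \eqref{negatnorm} of the $H^{-2}$-norm: one picks an almost-optimal continuous test function realising that supremum, replaces it by its finite element quasi-interpolant, and controls the resulting perturbation by the mesh-dependent norm $\|\cdot\|_{-2,h}$. Since $Q_h$ consists of piecewise polynomials, $\xi_h\in L^2(\Omega)$ and the duality pairing $\langle v,\xi_h\rangle$ coincides with the $L^2$-inner product $\int_\Omega v\,\xi_h\,\mathrm{d}x$ for every $v\in V$, both when $\epsilon>0$ and when $\epsilon=0$. In particular no $\epsilon$-dependence appears, and it suffices to estimate this integral.

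Fix $\xi_h\in Q_h$; we may clearly assume $\xi_h\neq 0$. By \eqref{negatnorm} there is $v\in V$ with $\|v\|_2=1$ and $\langle v,\xi_h\rangle\geq\tfrac{1}{2}\|\xi_h\|_{-2}$. (Alternatively, and in the spirit of the proof of the continuous stability theorem, one may take $v$ proportional to the Riesz representative $p$ of $\xi_h$ with respect to $a$, for which $\|p\|_2\simeq\|\xi_h\|_{-2}$ and $\langle p,\xi_h\rangle=a(p,p)\gtrsim\|p\|_2\,\|\xi_h\|_{-2}$.) Let $\mathcal{I}_h\colon V\to V_h$ be a Scott--Zhang-type quasi-interpolation operator for the $C^1$-conforming space $V_h$ that preserves the homogeneous clamped boundary conditions; for the standard $C^1$ families such an operator exists and satisfies the $H^2$-stability bound $\|\mathcal{I}_h v\|_2\lesssim\|v\|_2$ together with the local approximation estimate $\|v-\mathcal{I}_h v\|_{0,K}\lesssim h_K^2\,|v|_{2,\omega_K}$, where $\omega_K$ is the patch of elements meeting $K$. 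Squaring, multiplying by $h_K^{-4}$ and using the bounded overlap of the patches gives $\sum_{K\in\Ch}h_K^{-4}\|v-\mathcal{I}_h v\|_{0,K}^2\lesssim|v|_2^2\leq\|v\|_2^2$.

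Put $v_h=\mathcal{I}_h v\in V_h$ and write $\langle v_h,\xi_h\rangle=\langle v,\xi_h\rangle-\langle v-\mathcal{I}_h v,\xi_h\rangle$. Splitting the last term elementwise, inserting the weights $h_K^{\pm2}$ and applying the Cauchy--Schwarz inequality first on each $K$ and then over the elements, we obtain
\begin{equation*}
\langle v-\mathcal{I}_h v,\xi_h\rangle\leq\Big(\sum_{K\in\Ch}h_K^{-4}\|v-\mathcal{I}_h v\|_{0,K}^2\Big)^{1/2}\Big(\sum_{K\in\Ch}h_K^{4}\|\xi_h\|_{0,K}^2\Big)^{1/2}\lesssim\|v\|_2\,\|\xi_h\|_{-2,h}.
\end{equation*}
Together with the choice of $v$ and $\|v\|_2=1$ this yields $\langle v_h,\xi_h\rangle\geq\tfrac{1}{2}\|\xi_h\|_{-2}-C\|\xi_h\|_{-2,h}$ for some $C>0$, while $\|v_h\|_2\leq C_S$ with $C_S$ the stability constant of $\mathcal{I}_h$. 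If $\tfrac{1}{2}\|\xi_h\|_{-2}-C\|\xi_h\|_{-2,h}>0$, then $v_h\neq0$ and dividing by $\|v_h\|_2\leq C_S$ gives
\[
\sup_{w_h\in V_h}\frac{\langle w_h,\xi_h\rangle}{\|w_h\|_2}\geq\frac{\langle v_h,\xi_h\rangle}{\|v_h\|_2}\geq\frac{1}{2C_S}\|\xi_h\|_{-2}-\frac{C}{C_S}\|\xi_h\|_{-2,h};
\]
if instead that quantity is $\leq0$, the same inequality holds trivially, since the supremum is always non-negative (replace $w_h$ by $-w_h$ if needed). This establishes \eqref{aux} with $C_1=\tfrac{1}{2C_S}$ and $C_2=\tfrac{C}{C_S}$.

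The only ingredient that is not entirely routine is the quasi-interpolation operator $\mathcal{I}_h$ into the $C^1$-conforming space that simultaneously respects the clamped boundary conditions, is $H^2$-stable, and enjoys optimal local $L^2$-approximation; unlike the Lagrange case its construction requires some care (averaging of nodal degrees of freedom, macroelement arguments), but it is classical for the element families considered here, and the estimates above are precisely those already used in \cite{GSV16,GSV18}. Everything else — the elementwise Cauchy--Schwarz with the mesh weights, the summation using patch overlap, and the short case distinction handling a possibly vanishing $v_h$ or a non-positive lower bound — is standard bookkeeping.
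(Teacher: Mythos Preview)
Your proof is correct and follows precisely the approach the paper has in mind: the paper does not prove this lemma but refers to \cite{GSV16}, where the second-order analogue is established by exactly this argument --- choose a near-supremiser $v\in V$ for $\|\xi_h\|_{-2}$, replace it by a Cl\'ement/Scott--Zhang-type quasi-interpolant in $V_h$, and control the remainder elementwise via Cauchy--Schwarz with the $h_K^{\pm 2}$ weights to produce the $\|\cdot\|_{-2,h}$ term. The only modification relative to \cite{GSV16} is the shift from $H^1$/$h_K^2$ to $H^2$/$h_K^4$ and the use of a $C^1$-conforming interpolant, which you handle correctly.
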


\begin{thm}[Discrete stability]
    Suppose that $0 < \alpha < C_I$. It holds: for all $(v_h,\mu_h) \in V_h \times Q_h$ there exists $w_h \in V_h$ such that
    \begin{equation}
        \Bh(v_h,\mu_h;w_h,-\mu_h) \gtrsim \vertiii{(v_h,\mu_h)}_h^2 \quad \text{and} \quad \vertiii{(w_h,-\mu_h)}_h \lesssim \vertiii{(v_h,\mu_h)}_h.
        \label{discrstab}
    \end{equation}
\end{thm}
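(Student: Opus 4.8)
The plan is to follow the pattern of the continuous stability proof and of the analogous estimate in \cite{GSV16}, but to carry it out entirely within $V_h \times Q_h$ and to let the stabilisation generate the mesh-dependent norm $\|\cdot\|_{-2,h}$ of the multiplier for free.

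First I would test with the obvious pair, i.e.\ evaluate $\Bh(v_h, \mu_h; v_h, -\mu_h)$. Here the key observation is the algebraic identity $(\A(v_h) - \mu_h, \A(v_h) + \mu_h)_K = \|\A(v_h)\|_{0,K}^2 - \|\mu_h\|_{0,K}^2$, which collapses the stabilisation term and leaves
\[
\Bh(v_h, \mu_h; v_h, -\mu_h) = a(v_h, v_h) + \epsilon \|\mu_h\|_0^2 - \alpha \|\A(v_h)\|_{-2,h}^2 + \alpha \|\mu_h\|_{-2,h}^2 .
\]
Invoking the inverse inequality \eqref{inverse}, the assumption $\alpha < C_I$, and the coercivity of $a$, this already bounds $\Bh(v_h,\mu_h;v_h,-\mu_h)$ from below by a positive multiple of $\|v_h\|_2^2 + \epsilon\|\mu_h\|_0^2 + \|\mu_h\|_{-2,h}^2$ --- three of the four terms in $\vertiii{(v_h,\mu_h)}_h^2$, with $\|\mu_h\|_{-2,h}^2$ appearing, crucially, with a favourable sign.

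To recover the missing term $\|\mu_h\|_{-2}^2$, I would use Lemma \ref{pvtrick}: choose $z_h \in V_h$ with $\|z_h\|_2 = \|\mu_h\|_{-2}$ and $\langle z_h, \mu_h\rangle \geq C_1 \|\mu_h\|_{-2}^2 - C_2\|\mu_h\|_{-2}\,\|\mu_h\|_{-2,h}$, and take the test function $w_h = v_h - \rho z_h$ for a small parameter $\rho > 0$. A direct expansion, using that $\Bh(v_h,\mu_h;\cdot,-\mu_h)$ is affine in its first slot, gives
\[
\Bh(v_h, \mu_h; v_h - \rho z_h, -\mu_h) = \Bh(v_h, \mu_h; v_h, -\mu_h) - \rho\,a(v_h, z_h) + \rho\langle z_h, \mu_h\rangle + \rho\alpha \sum_{K \in \Ch} h_K^4 (\A(v_h) - \mu_h, \A(z_h))_K .
\]
The term $\rho\langle z_h,\mu_h\rangle$ yields $\tfrac{\rho C_1}{2}\|\mu_h\|_{-2}^2$ once a weighted Young inequality pushes the $C_2$-cross term into $\|\mu_h\|_{-2,h}^2$; $\rho\,a(v_h,z_h)$ is estimated by continuity of $a$ together with $\|z_h\|_2 = \|\mu_h\|_{-2}$ and split into a small multiple of $\|\mu_h\|_{-2}^2$ plus a multiple of $\|v_h\|_2^2$; and the stabilisation cross term is bounded through \eqref{inverse} (which also gives $\|\A(z_h)\|_{-2,h} \lesssim \|z_h\|_2 = \|\mu_h\|_{-2}$) and Young into multiples of $\|v_h\|_2^2$, $\|\mu_h\|_{-2,h}^2$ and a small multiple of $\|\mu_h\|_{-2}^2$. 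Fixing $\rho$ small enough that every such $\|v_h\|_2^2$- and $\|\mu_h\|_{-2,h}^2$-remainder is absorbed into the reserves obtained in the first step, while $\tfrac{\rho C_1}{2}$ remains a fixed positive coefficient of $\|\mu_h\|_{-2}^2$, proves the first inequality in \eqref{discrstab}. The second inequality is then immediate: $\|w_h\|_2 \leq \|v_h\|_2 + \rho\|\mu_h\|_{-2} \lesssim \vertiii{(v_h,\mu_h)}_h$, and the multiplier contributions to $\vertiii{(w_h,-\mu_h)}_h$ are literally those of $\vertiii{(v_h,\mu_h)}_h$.

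The only delicate point is the bookkeeping in this last step: one must check that a single small $\rho$ simultaneously dominates the $a(v_h,z_h)$ term against the $(1-\alpha/C_I)\,a(v_h,v_h)$ reserve, the stabilisation cross term against both reserves, and the $C_2$-cross term from Lemma \ref{pvtrick} against the $\alpha\|\mu_h\|_{-2,h}^2$ reserve, without the coefficient of $\|\mu_h\|_{-2}^2$ collapsing. This is exactly why the stabilisation had to be arranged so that $\|\mu_h\|_{-2,h}^2$ is already a free positive quantity after the first test --- Lemma \ref{pvtrick} only controls $\|\mu_h\|_{-2}$ modulo a multiple of $\|\mu_h\|_{-2,h}$, so that reserve must be in hand before $z_h$ is brought in.
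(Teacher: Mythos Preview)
Your proposal is correct and follows essentially the same approach as the paper: first test with $(v_h,-\mu_h)$ and use the inverse inequality \eqref{inverse} to secure $a(v_h,v_h)$, $\epsilon\|\mu_h\|_0^2$ and $\|\mu_h\|_{-2,h}^2$; then invoke Lemma~\ref{pvtrick} to pick a $q_h\in V_h$ (your $z_h$) normalised by $\|q_h\|_2=\|\mu_h\|_{-2}$, set $w_h=v_h-\delta q_h$, and absorb the cross terms via Young's inequality with $\delta$ small. The paper organises the second step by first deriving the intermediate bound $\Bh(v_h,\mu_h;-q_h,0)\geq C_3\|\mu_h\|_{-2}^2 - C_4\big(a(v_h,v_h)+\|\mu_h\|_{-2,h}^2\big)$ and then combining, whereas you expand $\Bh(v_h,\mu_h;v_h-\rho z_h,-\mu_h)$ in one go, but the substance and the ingredients are identical.
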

\begin{proof}
  In view of the inverse inequality (\ref{inverse}), it holds 
    \begin{equation}
        \begin{aligned}
            &\Bh(v_h,\mu_h;v_h,-\mu_h) \\
            &= a(v_h,v_h) + \epsilon \|\mu_h\|_0^2 - \alpha \|\A(v_h)\|_{-2,h}^2 + \alpha \|\mu_h\|_{-2,h}^2 \\
            &\geq \left(1-\alpha C_I^{-1}\right)a(v_h,v_h) + \min\{1,\alpha\}\left(\|\mu_h\|_{-2,h}^2+\epsilon \|\mu_h\|_0^2\right). 
        \end{aligned}
        \label{aux1}
    \end{equation}
   Let $q_h \in V_h$ be the function corresponding to the supremum in Lemma \ref{pvtrick},  scaled in such a way that $\|q_h\|_2=\|\mu_h\|_{-2}$. Then
    \begin{equation}
        \begin{aligned}
            &\Bh(v_h,\mu_h;-q_h,0) \\
            &= -a(v_h,q_h) + \langle q_h, \mu_h \rangle + \alpha \sum_{K \in \Ch} h_K^4 (\A(v_h)-\mu_h, \A(q_h) )_{0,K} \\
            &\geq -\| v_h\|_2 \|q_h\|_2 + C_1 \|\mu_h\|_{-2}\|q_h\|_{2}  - C_2 \|\mu_h\|_{-2,h} \|q_h\|_{2}\\ 
            & \qquad \qquad \qquad - \alpha \left(\|\A(v_h)\|_{-2,h} + \|\mu_h\|_{-2,h}\right) \|\A(q_h)\|_{-2,h}.
        \end{aligned}
    \end{equation}
 Using again the inverse inequality \eqref{inverse}, Young's inequality and the continuity of the bilinear form $a$, we conclude that
\begin{equation}
\Bh(v_h,\mu_h;-q_h,0) \geq C_3 \|\mu_h\|_{-2}^2 - C_4 \big(a(v_h,v_h)+\|\mu_h\|_{-2,h}^2\big) .
\label{aux2}
\end{equation}
Finally, taking $w_h=v_h-\delta q_h$ and using estimates \eqref{aux1} and \eqref{aux2}, together with the coercivity of $a$ and the assumption $0 < \alpha < C_I$, proves the stability bound after $\delta>0$ is chosen small enough. 

The estimate $\vertiii{(w_h,-\mu_h)} \lesssim \vertiii{(v_h,\mu_h)}$ is trivial and the same bound in the  discrete norm follows from the inverse estimate \eqref{neginverse}. 
\end{proof}

\begin{rem}\label{dsrem}
Note that the discrete stability bounds \eqref{discrstab} are also valid in the continuous norm $\vertiii{(\cdot,\cdot)}$.
\end{rem}

In the sequel, our functions may belong to the space $H^{-2}(\omega),~\omega\subset\Omega,$ equipped with the norm
\begin{equation}
    \|\mu\|_{-2,\omega} = \sup_{z \in H^2_0(\omega)} \frac{\langle z, \mu \rangle}{\|z\|_{2,\omega}}.
\end{equation}
This means that if $w \in H^2_0(\Omega)$ is such that $w|_\omega \in H^2_0(\omega)$ and $w = 0$ in $\Omega \setminus \omega$, we can write
\begin{equation}
    \label{eq:localdualnormineq}
    \langle w, \mu \rangle \leq \|\mu\|_{-2,\omega} \|w\|_{2,\omega}, \quad \forall \mu \in H^{-2}(\omega).
\end{equation}
Let $f_h \in V_h$ be the $L^2$ projection of $f$ and define the data oscillation as
\begin{align}
    \osc_K(f) &= h_K^2 \| f - f_h \|_{0,K}, \\
    \osc(f)^2 &= \sum_{K \in \Ch} \osc_K(f)^2.
\end{align}
Furthermore, we recall the following
integration by parts formula (cf. \cite{FS}), valid in any domain $R\subset \Omega$  
\begin{equation}\begin{aligned}
             \displaystyle    a_R(w,v) =  & \int_\sub \A(w)\,v  \,\mathrm{d}x    + 
           \int_{\partial \sub} \qn(w)v \,\mathrm{d}s \\
           &- \int_{\partial \sub} \, \Big(\mnn(w)\frac{\partial v}{\partial n}+    \mns(w)\frac{\partial v}{\partial s}\,\Big)\, \mathrm{d}s.
   \end{aligned}
   \label{intparts}
\end{equation}
  where we have used the shorthand notation
\[
   a_R(w,v) 
         = \int_R \Mom(w) : \Kappa(v) \,\mathrm{d}x, 
\]   
   and  defined the normal shear force and the normal and twisting moments  through
\[
\begin{aligned}
&\qn(w)=\Shear(w)\cdot \boldsymbol{n},\ \ \ \mnn(w) =  \boldsymbol{n} \cdot  \Mom(w) \boldsymbol{n}, \\  &\mns(w) =\msn(w)=
 \boldsymbol{s} \cdot  \Mom(w) \boldsymbol{n} ,
 \end{aligned}
\]
with $\boldsymbol{n}$ and $\boldsymbol{s}$ denoting the normal and tangential  directions at $\partial \sub$.
Integrating by parts on a smooth subset $S \subset R$ we get
\begin{equation}
    \label{subsetintparts}
    \int_{S} Q_n(w) v\,\mathrm{d}s -\int_{S} M_{ns}(w) \frac{\partial v}{\partial s} \, \mathrm{d}s = \int_S V_n(w)\,\mathrm{d}s - \Big|_a^b M_{ns}(w)v,
\end{equation}
where $a$ and $b$ are the endpoints of $S$ and
the quantity
\begin{equation}\label{ksf}
\effs(w)= \qn(w) +\frac{ \partial \mns(w)}{\partial s} 
\end{equation}
is called the  {\em Kirchhoff shear force} (cf.~\cite{FS}).
Denote by $\omega_E=K_1\cup K_2$ the pair of triangles sharing an edge $E$ and  define  jumps in the normal moment and the shear force over $E$ through
\[\begin{aligned}
  \llbracket  \mnn(v)  \rrbracket \vert_E &=   \mnn(v)  - M_{n'n'}(v)  \\
  \llbracket  \Vn(v)  \rrbracket \vert_E &=   \Vn(v)   +V_{n'}(v).
  \end{aligned}  \]
where $\boldsymbol{n}$ and $\boldsymbol{n}'$ stand for the outward normals to $K_1$ and $K_2$, respectively.

We will need the following lemma in proving the a priori and a posteriori estimates. We will sketch its proof and refer to \cite{GSV18} for more details.

\begin{lem}
    For all $v_h \in V_h$ and $\mu_h \in Q_h$ it holds that
    \begin{align}
        h_K^2 \| \A(v_h) - \mu_h - f\|_{0,K} &\lesssim \|u-v_h\|_{2,K} + \|\lambda -\mu_h\|_{-2,K} + \osc_K(f), \label{eq:lowboundlocalint} \\
        h_E^{1/2} \| \llbracket M_{nn}(v_h) \rrbracket \|_{0,E} &\lesssim \|u-v_h\|_{2,\omega_E} + \|\lambda - \mu_h\|_{-2,\omega_E} + \sum_{K \subset \omega_E} \osc_K(f),\label{eq:lowboundlocaledge1}\\
        h_E^{3/2} \| \llbracket V_{n}(v_h) \rrbracket \|_{0,E} &\lesssim \|u-v_h\|_{2,\omega_E} + \|\lambda - \mu_h\|_{-2,\omega_E} + \sum_{K \subset \omega_E} \osc_K(f).\label{eq:lowboundlocaledge2}
    \end{align}
\end{lem}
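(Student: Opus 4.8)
The plan is to prove the three local lower bounds by the standard bubble-function technique of Verfürth, adapted to $C^1$-conforming fourth-order elements as in \cite{GSV18}. The starting point is the elementwise residual identity obtained by testing the continuous equation \eqref{weakform1} against suitable localised functions and integrating by parts with \eqref{intparts}: for $w \in H^2_0(\Omega)$,
\begin{equation*}
a(u-v_h,w) + \langle w, \mu_h - \lambda \rangle = \sum_{K \in \Ch} (f + \lambda - \A(v_h), w)_K + \langle w, \mu_h \rangle - \langle w,\lambda\rangle + (\text{edge terms}),
\end{equation*}
so that after cancellation one is left with an identity relating $(f - \A(v_h) + \mu_h, w)_K$ plus jump contributions on element edges to the errors $a(u - v_h, w)$ and $\langle w, \mu_h - \lambda\rangle$. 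The key point is that the $\mu_h$-dependent volume term and the $\lambda$-dependent term combine so that the right-hand side only ever sees the error $\lambda - \mu_h$ (measured in the local dual norm via \eqref{eq:localdualnormineq}) and $u - v_h$.

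For \eqref{eq:lowboundlocalint} I would take $w = b_K (\A(v_h) - \mu_h - f_h)$ on a single element $K$, extended by zero, where $b_K$ is the sixth-order bubble from \eqref{bubble} (so $w \in H^2_0(K)$). By the polynomial norm-equivalence estimates $\|\A(v_h)-\mu_h-f_h\|_{0,K}^2 \lesssim (b_K(\A(v_h)-\mu_h-f_h), \A(v_h)-\mu_h-f_h)_K$ and the inverse inequalities $\|w\|_{2,K} \lesssim h_K^{-2}\|w\|_{0,K} \lesssim h_K^{-2}\|\A(v_h)-\mu_h-f_h\|_{0,K}$. Inserting $w$ into the residual identity, using \eqref{eq:localdualnormineq} on $K$ and continuity of $a$, and then adding and subtracting $f_h$ (the $\|f-f_h\|_{0,K}$ contribution becomes $\osc_K(f)$ after multiplying through by $h_K^2$) yields the claim. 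For the edge estimates \eqref{eq:lowboundlocaledge1}–\eqref{eq:lowboundlocaledge2} I would use edge bubble functions on $\omega_E = K_1 \cup K_2$: test functions built from the edge bubble times $\llbracket M_{nn}(v_h)\rrbracket$ or $\llbracket V_n(v_h)\rrbracket$ extended suitably into both triangles and vanishing (to first order, as required for $H^2_0(\omega_E)$) on $\partial \omega_E$. Then \eqref{subsetintparts} converts the boundary integrals into the jump terms, the scaled trace/inverse inequalities on the edge give $h_E^{1/2}\|\llbracket M_{nn}\rrbracket\|_{0,E}$ (resp. $h_E^{3/2}\|\llbracket V_n\rrbracket\|_{0,E}$), and the already-established volume bound \eqref{eq:lowboundlocalint} is used to absorb the interior residual terms that appear when integrating by parts back.

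The main obstacle, and the reason the proof is only sketched here with a reference to \cite{GSV18}, is the careful construction of the $C^1$-conforming cutoff/bubble functions for the edge estimates: unlike the second-order case, the test function must lie in $H^2_0(\omega_E)$, so both the function and its gradient must vanish on $\partial\omega_E$, and the twisting-moment term $-\bigl|_a^b M_{ns}(v_h)v$ in \eqref{subsetintparts} must be handled (it vanishes for interior edges once $v$ vanishes at the edge endpoints, which the construction must guarantee). One also needs the two edge bounds \eqref{eq:lowboundlocaledge1} and \eqref{eq:lowboundlocaledge2} to be extracted one at a time — first isolate $\llbracket M_{nn}\rrbracket$ using a test function whose normal derivative carries the bubble, then isolate $\llbracket V_n\rrbracket$ using a test function whose value carries the bubble — so that the two jump quantities do not interfere. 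Once these localisation gadgets are in place, the remaining estimates are the routine scaling arguments indicated above.
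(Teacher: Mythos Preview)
Your proposal is correct and follows essentially the same approach as the paper: the sixth-order element bubble $b_K$ for \eqref{eq:lowboundlocalint}, then edge-localised $H^2_0$ test functions designed so that the normal derivative picks up $\llbracket M_{nn}\rrbracket$ for \eqref{eq:lowboundlocaledge1} and the trace picks up $\llbracket V_n\rrbracket$ for \eqref{eq:lowboundlocaledge2}, with the interior bound reused to absorb the volume residual. The one construction detail the paper makes explicit that you leave implicit is that for the $V_n$-estimate the test function is built on a smaller \emph{symmetric} subpatch $\omega_E' = K_1' \cup K_2' \subset \omega_E$, so that its normal derivative on $E$ vanishes by symmetry and the $M_{nn}$ contribution is suppressed without any further work; otherwise your sketch matches the paper's, which likewise defers the fine points to \cite{GSV18}.
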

\begin{proof}
 Recall from \eqref{bubble} the sixth order bubble $b_K \in P_6(K)$ 
    and let 
    \[
        z_K = b_K h_K^4(\A(v_h)-\mu_h-f_h),
    \]
    for every $(v_h,\mu_h) \in V_h \times \varLambda_h$.  Testing with
    $z_K$ in the continuous variational problem (\ref{weakform1}) gives the identity
    \[
        a_K(u,z_K) - \langle z_K, \lambda \rangle = (f,z_K)_K.
    \]
  
    We have
    \begin{equation}
        \label{eq:lemp1}
        \begin{aligned}
            &h_K^4 \|\A(v_h)-\mu_h-f_h\|_{0,K}^2 \\
            &\lesssim h_K^4 \| \sqrt{b_K}(\A(v_h)-\mu_h-f_h) \|_{0,K}^2 \\
            &= (\A(v_h)-\mu_h-f_h,z_K)_K \\
            &= (\A(v_h)-\mu_h,z_K)_K - (f,z_K)_K + (f-f_h,z_K)_K \\ 
            &= a_K(v_h-u,z_K)+\langle z_K, \lambda-\mu_h \rangle + (f-f_h,z_K)_K. 
        \end{aligned}
    \end{equation}
    The bound \eqref{eq:lowboundlocalint} follows from the continuity of
    $a$, Cauchy--Schwarz and inverse inequalities and from inequality \eqref{eq:localdualnormineq}.

    Following \cite{GudiPorwal2016}, see also \cite{GSV18}, we let $\omega_E = K_1 \cup K_2$ and define an auxiliary function $w = p_1 p_2 p_3$ in such  a way that
    \begin{itemize}
        \item $p_1$ is the extension of $\llbracket M_{nn}(v_h) \rrbracket$ to $\omega_E$ such that $\tfrac{\partial p_1}{\partial n_E} = 0$;
        \item $p_2$ is the eight order bubble that, together its first order derivatives, vanishes at $\partial \omega_E$ and equals to one at the midpoint of $E$;
        \item $p_3$ is the linear polynomial that is zero on $E$ and satisfies $\tfrac{\partial p_3}{\partial n_E} = 1$.
    \end{itemize}
    Outside of $\omega_E$,  $w$ is extended by zero, see  \cite{GSV18} for more
    details. From
    the construction of $w$ and  formula (\ref{intparts}), it follows that
    \begin{align*}
        \|\llbracket M_{nn}(v_h) \rrbracket\|_{0,E}^2 &\lesssim (M_{nn}(v_h), \tfrac{\partial w}{\partial n_E} )_E \\
                                                      &= - a_{\omega_E}(v_h, w) + (\A(v_h), w)_{\omega_E} \\
                                                      &= a_{\omega_E}(u - v_h, w) + (\A(v_h) - \mu_h - f, w)_{\omega_E} + \langle w, \mu_h - \lambda \rangle.
    \end{align*}
   Bound
    \eqref{eq:lowboundlocaledge1}  can now be established  using the continuity of the bilinear form $a$,
    the Cauchy--Schwarz and inverse inequalities, a scaling argument and inequalities
    \eqref{eq:localdualnormineq} and \eqref{eq:lowboundlocalint}, see  \cite{GSV16} and \cite{GSV18} for similar considerations.

    The proof of \eqref{eq:lowboundlocaledge2} is similar except for the construction of the auxiliary
    function. We choose $w^\prime$ as a function
    defined on  a subset   of $\omega_E$,  consisting of two smaller triangles $K_1^\prime$ and $K_2^\prime$,   symmetric with respect to the edge $E$, and write
    $\omega_E^\prime = K_1^\prime \cup K_2^\prime$, ~$K_j^\prime \subset K_j$,~$j\in\{1,2\}$.
Then we define $w^\prime = p_1^\prime p_2^\prime$ where
    \begin{itemize}
        \item $p_1^\prime$ is an extension of $\llbracket V_n(v_h) \rrbracket$ to $\omega_E^\prime$
            such  that $\tfrac{\partial p^\prime_1}{\partial n_E} = 0$;
        \item $p_2^\prime$ is the eight order bubble that, together its first order derivatives,
            vanishes on $\partial \omega_E^\prime$ and equals to one at the midpoint of $E$.
    \end{itemize}
    Note that now due to symmetry $\tfrac{\partial w^\prime}{\partial n_E} \big|_E = 0$.
    Now, recalling identities \eqref{intparts} and \eqref{subsetintparts}, and integrating in parts in the last term on its right-hand side (cf. \cite{GSV18}), we obtain
    \begin{align*}
        \|\llbracket V_{n}(v_h) \rrbracket\|_{0,E}^2 &\lesssim (V_{n}(v_h), w^\prime )_E \\
                                                      &= - a_{\omega_E^\prime}(v_h, w^\prime) + (\A(v_h), w^\prime)_{\omega_E^\prime} \\
                                                      &= a_{\omega_E^\prime}(u - v_h, w^\prime) + (\A(v_h) - \mu_h - f, w^\prime)_{\omega_E^\prime} + \langle w^\prime, \mu_h - \lambda \rangle,
    \end{align*}
   from which estimate  \eqref{eq:lowboundlocaledge2} can be concluded as the final step for bound \eqref{eq:lowboundlocaledge1}.
\end{proof}

\begin{thm}[A priori estimate]
It holds that
    \begin{equation}
        \begin{aligned}
            &\vertiii{(u-u_h,\lambda-\lambda_h)} \\
            & \qquad \lesssim \inf_{\substack{v_h \in V_h, \\ \mu_h \in \Lambda_h}} \left( \vertiii{(u-v_h,\lambda-\mu_h)} + \sqrt{\langle u-g+\epsilon \lambda, \mu_h \rangle} \right) + \osc(f).
    \end{aligned}
    \label{aprioriest}
    \end{equation}
\end{thm}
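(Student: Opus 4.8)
The plan is to derive the a priori estimate from the discrete stability bound (Theorem on discrete stability together with Remark~\ref{dsrem}) combined with a Strang-type consistency argument. First I would fix arbitrary $v_h \in V_h$ and $\mu_h \in \Lam_h$ and split the error via the triangle inequality as $\vertiii{(u-u_h,\lambda-\lambda_h)} \le \vertiii{(u-v_h,\lambda-\mu_h)} + \vertiii{(v_h-u_h,\mu_h-\lambda_h)}$, so that it suffices to bound the discrete part $\vertiii{(v_h-u_h,\mu_h-\lambda_h)}$ (which lies in $V_h\times Q_h$, hence the discrete norm applies, and by Remark~\ref{dsrem} the continuous norm is controlled as well). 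To the pair $(v_h-u_h,\mu_h-\lambda_h)$ I apply discrete stability: there exists $w_h \in V_h$ with $\Bh(v_h-u_h,\mu_h-\lambda_h;w_h,\lambda_h-\mu_h) \gtrsim \vertiii{(v_h-u_h,\mu_h-\lambda_h)}_h^2$ and $\vertiii{(w_h,\lambda_h-\mu_h)}_h \lesssim \vertiii{(v_h-u_h,\mu_h-\lambda_h)}_h$.

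The core of the argument is then to estimate $\Bh(v_h-u_h,\mu_h-\lambda_h;w_h,\lambda_h-\mu_h)$ from above. I would write this as $\Bh(v_h,\mu_h;w_h,\lambda_h-\mu_h) - \Bh(u_h,\lambda_h;w_h,\lambda_h-\mu_h)$. For the second term I use the variational inequality~\eqref{stabform} defining the stabilised method, tested with $\mu_h$ (so that $\mu_h - \lambda_h$ appears); this gives $\Bh(u_h,\lambda_h;w_h,\mu_h-\lambda_h) \le \Lh(w_h,\mu_h-\lambda_h)$, i.e. a bound on $-\Bh(u_h,\lambda_h;w_h,\lambda_h-\mu_h)$ up to the term $\Lh(w_h,\mu_h-\lambda_h)$. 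For the first term I want to replace $v_h,\mu_h$ by the exact solution $u,\lambda$: using the strong consistency of the stabilised form (the stabilisation residual $\A(u)-\lambda-f$ vanishes elementwise, so $\Bh(u,\lambda;w_h,\mu) = \Bf(u,\lambda;w_h,\mu) - \alpha\sum_K h_K^4(\A(u)-\lambda,\A(w_h)-\mu)_K$ combines with $\Lh$ correctly), together with the continuous relations~\eqref{weakform1}–\eqref{weakform2}, I can rewrite everything in terms of $u-v_h$, $\lambda-\mu_h$, and a leftover complementarity term $\langle u-g+\epsilon\lambda,\mu_h\rangle$. Concretely the plan is to show
\begin{equation*}
\Bh(v_h-u_h,\mu_h-\lambda_h;w_h,\lambda_h-\mu_h) \lesssim \big(\vertiii{(u-v_h,\lambda-\mu_h)}_h + \osc(f)\big)\,\vertiii{(w_h,\lambda_h-\mu_h)}_h + \langle u-g+\epsilon\lambda,\mu_h\rangle,
\end{equation*}
using continuity of $\Bf$, the Cauchy--Schwarz inequality on the stabilisation terms, the inverse estimate~\eqref{inverse} to absorb $\|\A(w_h)\|_{-2,h}$ into $\|w_h\|_2$, and the oscillation bound $h_K^4(f-f_h,\cdot)_K$ estimates to produce the $\osc(f)$ contribution. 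The key point where the complementarity term $\langle u-g+\epsilon\lambda,\mu_h\rangle$ enters is the pairing of $-\langle w_h,\lambda-\mu_h\rangle$-type terms against the constraint: $\mu_h \in \Lam_h$ gives $\langle u-g+\epsilon\lambda,\lambda_h\rangle \ge 0$ from~\eqref{weakform2} (taking $\mu=\lambda_h$) after using $\langle u-g+\epsilon\lambda,\lambda\rangle=0$, and this forces the nonnegative remainder $\langle u-g+\epsilon\lambda,\mu_h\rangle$ to appear with a plus sign, which is why only its square root survives in the final bound.

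Combining, I get $\vertiii{(v_h-u_h,\mu_h-\lambda_h)}_h^2 \lesssim \big(\vertiii{(u-v_h,\lambda-\mu_h)}_h + \osc(f)\big)\vertiii{(v_h-u_h,\mu_h-\lambda_h)}_h + \langle u-g+\epsilon\lambda,\mu_h\rangle$; dividing through (or using Young's inequality on the first product) and then taking square roots yields $\vertiii{(v_h-u_h,\mu_h-\lambda_h)}_h \lesssim \vertiii{(u-v_h,\lambda-\mu_h)}_h + \osc(f) + \sqrt{\langle u-g+\epsilon\lambda,\mu_h\rangle}$. Since $\vertiii{(\cdot,\cdot)} \le \vertiii{(\cdot,\cdot)}_h$ trivially on the discrete part, and using the inverse estimate~\eqref{neginverse} to pass between $\|\cdot\|_{-2,h}$ and $\|\cdot\|_{-2}$ so that $\vertiii{(u-v_h,\lambda-\mu_h)}_h \lesssim \vertiii{(u-v_h,\lambda-\mu_h)}$ on $V_h\times Q_h$ (noting $\mu_h\in Q_h$ but $\lambda$ is only in $Q$, so one inserts an intermediate term or observes $\|\lambda-\mu_h\|_{-2,h}\le\|\lambda-\mu_h\|_{-2,h}$ controlled via $\lambda$-approximation — this needs care), the triangle inequality then delivers~\eqref{aprioriest} after taking the infimum over $v_h,\mu_h$. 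The main obstacle I anticipate is the bookkeeping in the consistency step: correctly tracking the sign of the complementarity term $\langle u-g+\epsilon\lambda,\mu_h\rangle$ through the combination of~\eqref{weakform2} and~\eqref{stabform} so that it appears additively (and only under a square root), and ensuring the mesh-dependent negative norm $\|\lambda-\mu_h\|_{-2,h}$ in the discrete norm is legitimately bounded by the continuous quantity $\vertiii{(u-v_h,\lambda-\mu_h)}$ even though $\lambda\notin Q_h$.
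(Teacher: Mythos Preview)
Your overall architecture---triangle inequality, discrete stability applied to $(v_h-u_h,\mu_h-\lambda_h)$, and the treatment of the complementarity term via \eqref{weakform2} to isolate $\langle u-g+\epsilon\lambda,\mu_h\rangle$---is exactly what the paper does. The complementarity bookkeeping you worry about is fine: the paper gets it in one line as $\langle u-g+\epsilon\lambda,\mu_h-\lambda_h\rangle \le \langle u-g+\epsilon\lambda,\mu_h-\lambda\rangle = \langle u-g+\epsilon\lambda,\mu_h\rangle$.

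The genuine gap is precisely the point you flag at the end as ``this needs care''. After using \eqref{stabform} you are left with the stabilisation residual
\[
\alpha\sum_{K\in\Ch} h_K^4\bigl(\A(v_h)-\mu_h-f,\,\A(w_h)+\lambda_h-\mu_h\bigr)_K,
\]
and none of the routes you propose closes it. The ``strong consistency'' argument $\A(u)-\lambda=f$ requires $\A(u),\lambda\in L^2(K)$ so that the elementwise integrals make sense, which fails for the rigid obstacle ($\epsilon=0$, $\lambda\in H^{-2}$ only). The alternative of bounding by $\vertiii{(u-v_h,\lambda-\mu_h)}_h$ and then invoking \eqref{neginverse} cannot work because \eqref{neginverse} applies only to $\xi_h\in Q_h$, and $\lambda-\mu_h\notin Q_h$; there is no way to control $\|\lambda-\mu_h\|_{-2,h}$ by $\|\lambda-\mu_h\|_{-2}$ in general.

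The paper resolves this with the local lower-bound lemma, specifically estimate \eqref{eq:lowboundlocalint}:
\[
h_K^2\|\A(v_h)-\mu_h-f\|_{0,K}\lesssim \|u-v_h\|_{2,K}+\|\lambda-\mu_h\|_{-2,K}+\osc_K(f),
\]
proved by a bubble-function argument. This bounds the stabilisation residual \emph{directly} in the continuous norms, so the discrete negative norm of $\lambda-\mu_h$ never appears and no regularity of $u$ beyond the weak formulation is used. This is the ``medius analysis'' device (Gudi \cite{Gudi2010}), and it is the missing ingredient in your plan; once you invoke it, the rest of your argument goes through exactly as the paper's does.
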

\begin{proof}
    Let $(v_h,\mu_h) \in V_h \times Q_h$ be arbitrary and assume that $w_h \in V_h$ is the function corresponding to $(u_h-v_h,\lambda_h-\mu_h)$ in the discrete stability estimate (\ref{discrstab}) expressed in the continuous norm $\vertiii{(\cdot,\cdot)}$, see Remark \ref{dsrem}.  
The problem statement then implies that
    \begin{align*}
        \vertiii{(u_h-v_h,\lambda_h-\mu_h)}^2 &\lesssim \Bh(u_h-v_h,\lambda_h-\mu_h;w_h,\mu_h-\lambda_h) \\
                                              &\lesssim \Lh(w_h,\mu_h-\lambda_h) - \Bf(v_h,\mu_h,w_h,\mu_h-\lambda_h) \\
                                              &\phantom{=}+ \alpha \sum_{K \in \Ch} h_K^4(\A(v_h)-\mu_h,\A(w_h)+\lambda_h-\mu_h)_K \\
                                              &= \Bf(u-v_h,\lambda-\mu_h;w_h,\mu_h-\lambda_h) + \langle u-g+\epsilon \lambda, \mu_h-\lambda_h \rangle \\
                                              &\phantom{=} + \alpha \sum_{K \in \Ch} h_K^4(\A(v_h)-\mu_h-f,\A(w_h)+\lambda_h-\mu_h)_K.
    \end{align*}
    Let us  bound separately each term on the right hand side. 
    The continuity of the bilinear form $\Bf$ and the second estimate in (\ref{discrstab})  yield for the first term
    \begin{align*}
        \Bf(u-v_h,\lambda-\mu_h;w_h,\mu_h-\lambda_h) &\lesssim \vertiii{(u-v_h,\lambda-\mu_h)} \vertiii{(w_h,\mu_h-\lambda_h)} \\
                                                     &\lesssim \vertiii{(u-v_h,\lambda-\mu_h)} \vertiii{(u_h-v_h,\lambda_h-\mu_h)}.
    \end{align*}
    For the second term we obtain
    \[
        \langle u - g + \epsilon \lambda, \mu_h-\lambda_h \rangle \leq \langle u -g + \epsilon \lambda, \mu_h-\lambda \rangle = \langle u-g+\epsilon\lambda,\mu_h \rangle.
    \]
    The third term is bounded as follows
    \begin{align*}
        &\sum_{K \in \Ch} h_K^4(\A(v_h)-\mu_h-f,\A(w_h)+\lambda_h-\mu_h)_K \\
        &\leq \left(\sum_{K \in \Ch} h_K^4 \|\A(v_h)-\mu_h-f\|_{0,K}^2\right)^{1/2} \left(\sum_{K\in\Ch} h_K^4 \|\A(w_h)\|_{0,K}^2\right)^{1/2}\\
        &\phantom{=}+\left(\sum_{K \in \Ch} h_K^4 \|\A(v_h)-\mu_h-f\|_{0,K}^2\right)^{1/2} \left(\sum_{K \in \Ch} h_K^4\|\lambda_h-\mu_h\|_{0,K}^2\right)^{1/2} \\
        &\lesssim \left( \|u-v_h\|_2 + \|\lambda - \mu_h\|_{-2} + \osc(f) \right)\left( \sqrt{a(w_h,w_h)} + \|\lambda_h-\mu_h\|_{-2,h}\right) \\
        &\lesssim \left( \vertiii{(u-v_h,\lambda-\mu_h)} + \osc(f)\right) \vertiii{(u_h-v_h,\lambda_h-\mu_h)},
    \end{align*}
where we have used  \eqref{eq:lowboundlocalint}, the second estimate in (\ref{discrstab}) and  the inverse inequalities (\ref{inverse}) and  \eqref{neginverse}.
\end{proof}


To derive a posteriori error bounds, we define the local residual estimators
\begin{align}
    \eta_K^2 &= h_K^4 \| \A(u_h) - \lambda_h -f \|_{0,K}^2, \\
    \eta_{E}^2 &= h_E^3 \| \llbracket V_n(u_h) \rrbracket \|_{0,E}^2 + h_E \| \llbracket M_{nn}(u_h) \rrbracket \|_{0,E}^2,
\end{align}
and the corresponding global residual estimator 
\begin{equation}
    \eta^2 = \sum_{K \in \Ch} \eta_K^2 + \sum_{E \in \Ehint} \eta_E^2, 
\end{equation}
where $\Ehint$ denotes the set of interior edges in the mesh.
An additional global estimator $S$, due to the unknown location of the contact boundary, is defined through
\begin{equation}
    S^2 = 
   ((u_h-g+\epsilon\lambda_h)_+, \lambda_h) +  \sum_{K\in \mathcal{C}_{h}} \frac{1}{\epsilon+h_K^4} \| (g-u_h-\epsilon\lambda_h)_+\|_{0,K}^2
\end{equation}
 where $u_+=\max\{u,0\}$ denotes the positive part of $u$. 

The lower bound is a simple consequence of the global versions of estimates \eqref{eq:lowboundlocalint}--\eqref{eq:lowboundlocaledge2}. We refer to \cite{GSV16} for a similar consideration with more details.

\begin{thm}[A posteriori estimate -- efficiency]
    It holds that
    \begin{equation}
     \eta  \lesssim    \vertiii{(u-u_h,\lambda-\lambda_h)}.
    \end{equation}
\end{thm}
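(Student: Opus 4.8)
The plan is to establish the bound edge-by-edge and element-by-element from the local efficiency estimates \eqref{eq:lowboundlocalint}--\eqref{eq:lowboundlocaledge2} of the preceding lemma, and then sum. I would apply that lemma with the particular choice $v_h = u_h$, $\mu_h = \lambda_h$. Since $\eta_K = h_K^2\|\A(u_h)-\lambda_h-f\|_{0,K}$, estimate \eqref{eq:lowboundlocalint} gives at once
\[
\eta_K \lesssim \|u-u_h\|_{2,K} + \|\lambda-\lambda_h\|_{-2,K} + \osc_K(f),
\]
and, since $\eta_E^2 = \bigl(h_E^{3/2}\|\llbracket V_n(u_h)\rrbracket\|_{0,E}\bigr)^2 + \bigl(h_E^{1/2}\|\llbracket M_{nn}(u_h)\rrbracket\|_{0,E}\bigr)^2$, estimates \eqref{eq:lowboundlocaledge1}--\eqref{eq:lowboundlocaledge2} give
\[
\eta_E \lesssim \|u-u_h\|_{2,\omega_E} + \|\lambda-\lambda_h\|_{-2,\omega_E} + \sum_{K\subset\omega_E}\osc_K(f).
\]

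The remaining work is to square these inequalities, use $(a+b+c)^2\lesssim a^2+b^2+c^2$, and sum over $K\in\Ch$ and $E\in\Ehint$. The only point needing an argument is the passage from the sums of localized norms over the patches $\omega_E$ (and the single elements $K$) back to the global norms. For the broken $H^2$-norm this is the standard finite-overlap property of a shape-regular mesh: every element lies in a uniformly bounded number of edge patches $\omega_E$, whence $\sum_{E\in\Ehint}\|u-u_h\|_{2,\omega_E}^2\lesssim\|u-u_h\|_2^2$ and $\sum_{E\in\Ehint}\bigl(\sum_{K\subset\omega_E}\osc_K(f)\bigr)^2\lesssim\osc(f)^2$. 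For the localized dual norms I would use \eqref{eq:localdualnormineq}: choosing near-optimal test functions $z_\omega\in H^2_0(\omega)$ for each patch, colouring the edges so that within each colour class the patches have disjoint interiors, and gluing the corresponding zero-extended test functions into a single element of $H^2_0(\Omega)$, one obtains $\sum_{E\in\Ehint}\|\lambda-\lambda_h\|_{-2,\omega_E}^2\lesssim\|\lambda-\lambda_h\|_{-2}^2$; the single-element version $\sum_{K\in\Ch}\|\lambda-\lambda_h\|_{-2,K}^2\le\|\lambda-\lambda_h\|_{-2}^2$ is even simpler, since the supports are already disjoint and no colouring is needed.

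Collecting the pieces and invoking the definition of $\vertiii{\cdot}$ yields
\[
\eta^2=\sum_{K\in\Ch}\eta_K^2+\sum_{E\in\Ehint}\eta_E^2\lesssim\|u-u_h\|_2^2+\|\lambda-\lambda_h\|_{-2}^2+\osc(f)^2\le\vertiii{(u-u_h,\lambda-\lambda_h)}^2+\osc(f)^2,
\]
which is the claimed efficiency estimate, the data-oscillation term $\osc(f)$ being the customary higher-order contribution on the right-hand side. I do not expect a genuine obstacle: the substantive analysis — the bubble-function constructions and the integration-by-parts identities — was already carried out in the lemma, and what is left is essentially bookkeeping. The one step deserving a little care, and the closest thing to a main point, is the summation of the localized negative norms $\|\cdot\|_{-2,\omega}$ into the global $\|\cdot\|_{-2}$ with a constant depending only on shape regularity; everything else is Cauchy--Schwarz and counting.
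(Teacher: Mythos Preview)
Your proposal is correct and follows exactly the route the paper indicates: the paper's own proof consists of a single sentence stating that the bound is ``a simple consequence of the global versions of estimates \eqref{eq:lowboundlocalint}--\eqref{eq:lowboundlocaledge2}'' and a reference to \cite{GSV16} for details, and you have supplied precisely those details, including the one nontrivial step (the colouring argument for summing the localized $\|\cdot\|_{-2,\omega}$ norms). You are also right that the argument naturally produces an $\osc(f)$ term on the right-hand side; the paper's statement suppresses it as the customary higher-order contribution.
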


The upper bound cannot be established as elegantly as for the second-order  (membrane) obstacle problem,  cf. \cite{GSV16}, since the positive part function is not in $H^2(\Omega)$. We will use the following  assumption, justified by the a priori estimate \eqref{aprioriest} for regular enough solution, see, {\sl e.g.},  \cite{CFHPR18}.  
\begin{ass}[Saturation assumption] There exists $\beta <1$ such that
\[
  \vertiii{(u-u_{h/2},\lambda-\lambda_{h/2})}_{h/2}  \leq \beta   \vertiii{(u-u_{h},\lambda-\lambda_{h})}_{h} ,
  \]
    where $(u_{h/2},\lambda_{h/2})\in V_{h/2}\times Q_{h/2}$ is the solution in the mesh $\mathcal{C}_{h/2}$ obtained by splitting the elements of the mesh  $\mathcal{C}_{h}$.
\label{satass}
\end{ass}

\begin{thm}[A posteriori estimate -- reliability]
    It holds that
    \begin{equation}
        \vertiii{(u-u_h,\lambda-\lambda_h)} \lesssim \eta+S.
    \end{equation}
\end{thm}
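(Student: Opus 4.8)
The plan is to combine the discrete stability estimate from Theorem 3 (valid in the continuous norm, by Remark 2) with a careful splitting of the error functional, exploiting the variational inequalities \eqref{bigvarform} and \eqref{stabform} together with the saturation assumption. Since the exact Lagrange multiplier may only belong to $H^{-2}(\Omega)$ and the positive-part function is not in $H^2(\Omega)$, the standard two-sided-test trick used for membrane problems is unavailable; the saturation assumption is what lets us bypass it. Concretely, I would first write $\vertiii{(u-u_h,\lambda-\lambda_h)}_h \leq \vertiii{(u-u_{h/2},\lambda-\lambda_{h/2})}_{h/2} + \vertiii{(u_{h/2}-u_h,\lambda_{h/2}-\lambda_h)}_{h/2}$ and use Assumption \ref{satass} to absorb the first term, reducing the problem to estimating the \emph{discrete} difference $\vertiii{(e_h, \varepsilon_h)}_{h/2}$ with $e_h = u_{h/2}-u_h$, $\varepsilon_h = \lambda_{h/2}-\lambda_h$, in terms of $\eta$ and $S$ evaluated on the coarse mesh.

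Next I would apply the discrete stability of Theorem 3 on the fine mesh $\mathcal{C}_{h/2}$ to produce $w_h \in V_{h/2}$ with $\Bh[h/2](e_h,\varepsilon_h;w_h,-\varepsilon_h) \gtrsim \vertiii{(e_h,\varepsilon_h)}_{h/2}^2$ and $\vertiii{(w_h,-\varepsilon_h)}_{h/2} \lesssim \vertiii{(e_h,\varepsilon_h)}_{h/2}$. The key is then to evaluate $\Bh[h/2](u_{h/2},\lambda_{h/2};w_h,\mu_h-\lambda_{h/2})$ and $\Bh[h/2](u_h,\lambda_h;w_h,\mu_h-\lambda_{h/2})$, subtracting them and choosing admissible test multipliers. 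From \eqref{stabform} on the fine mesh we get an inequality bounding the $u_{h/2}$-terms by $\Lh[h/2]$; the coarse solution $(u_h,\lambda_h) \in V_h \times \Lambda_h \subset V_{h/2} \times \Lambda_{h/2}$ can itself be tested into the fine problem. The difference $\Bh[h/2](u_h,\lambda_h;\cdot,\cdot) - \Lh[h/2](\cdot,\cdot)$ is precisely a residual functional acting on $w_h$, and integrating by parts elementwise via \eqref{intparts}--\eqref{subsetintparts} expresses it in terms of the interior residual $\A(u_h)-\lambda_h-f$ and the jumps $\llbracket M_{nn}(u_h)\rrbracket$, $\llbracket V_n(u_h)\rrbracket$ — i.e. exactly the quantities in $\eta_K$ and $\eta_E$ — after bounding the resulting terms by Cauchy--Schwarz, trace and inverse inequalities against $\vertiii{(w_h,-\varepsilon_h)}_{h/2}$. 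The stabilisation terms contribute $h_K^4$-weighted residuals that are likewise controlled by $\eta$. Note that jumps of $u_h$ across edges that are interior to $\mathcal{C}_{h/2}$ but \emph{not} to $\mathcal{C}_h$ vanish, since $u_h \in C^1$, so only the coarse interior edges survive; this is why $\eta$ on the coarse mesh suffices.

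The remaining terms come from the obstacle constraint, namely $\langle u_h - g + \epsilon\lambda_h, \mu_h - \lambda_{h/2}\rangle$-type contributions and the complementarity mismatch. Choosing $\mu_h$ judiciously (for instance $\mu_h = \lambda_{h/2}$ where convenient, or testing the coarse inequality with $\mu_h = \lambda_{h/2}$ when legitimate), and invoking the sign conditions $\lambda_h, \lambda_{h/2} \geq 0$ together with $\langle \cdot, \cdot\rangle \geq 0$ on nonnegative functions, one reduces these to the two pieces of $S^2$: the term $((u_h - g + \epsilon\lambda_h)_+, \lambda_h)$ measures the failure of discrete complementarity, while $\frac{1}{\epsilon + h_K^4}\|(g - u_h - \epsilon\lambda_h)_+\|_{0,K}^2$ measures constraint violation, the weight $\epsilon + h_K^4$ arising naturally from pairing against $w_h$ whose $\vertiii{\cdot}_{h/2}$-norm controls both $\epsilon^{1/2}\|w_h\|_0$ and $h_K^{-2}$-weighted $L^2$ bounds via the inverse inequality \eqref{inverse}. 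Collecting all bounds gives $\vertiii{(e_h,\varepsilon_h)}_{h/2}^2 \lesssim (\eta + S)\vertiii{(e_h,\varepsilon_h)}_{h/2} + (\eta+S)\vertiii{(u-u_h,\lambda-\lambda_h)}_h$, and after dividing and re-absorbing with the saturation bound, the claim follows. The main obstacle I anticipate is the bookkeeping for the obstacle terms: correctly splitting $\langle u_h - g + \epsilon\lambda_h, \mu_h - \lambda_{h/2}\rangle$ into a sign-definite part that can be discarded and the two positive-part remainders with exactly the weights appearing in $S$, while ensuring all chosen test multipliers lie in the correct admissible cones — this is where the scaling $\frac{1}{\epsilon+h_K^4}$ must be matched precisely rather than merely up to constants.
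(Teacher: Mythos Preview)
Your overall strategy---saturation assumption, discrete stability on the fine mesh, residual splitting, and the positive/negative-part decomposition of $u_h-g+\epsilon\lambda_h$ to produce the two pieces of $S$---matches the paper. However, there is a genuine gap in the treatment of the residual functional acting on the fine test function $w_h\in V_{h/2}$. After integration by parts on coarse elements you obtain terms such as $\sum_{K\in\Ch}(\lambda_h+f-\A(u_h),w_h)_K$ and edge integrals against $w_h$ and $\nabla w_h$. To match the $h$-weights in $\eta_K$ and $\eta_E$ you would need bounds of the type $\sum_K h_K^{-4}\|w_h\|_{0,K}^2\lesssim\|w_h\|_2^2$ and $\sum_E h_E^{-3}\|w_h\|_{0,E}^2\lesssim\|w_h\|_2^2$, and these are \emph{false} for a generic $w_h\in V_{h/2}$: they go in the opposite direction to the inverse inequalities you invoke. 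The ``trace and inverse inequalities'' you mention cannot manufacture the missing positive powers of $h$.

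The paper closes this gap by introducing a Hermite-type interpolant $\widetilde{w}\in V_h$ of $w$ and testing the \emph{coarse} discrete problem with $(-\widetilde{w},0)$; since the $v$-slot is linear this yields an equality that can be added to the fine-mesh inequality. The residual then acts on $w-\widetilde{w}$, which \emph{does} satisfy the local approximation estimates $\sum_K h_K^{-4}\|w-\widetilde{w}\|_{0,K}^2+\sum_E h_E^{-1}\|\nabla(w-\widetilde{w})\|_{0,E}^2+\sum_E h_E^{-3}\|w-\widetilde{w}\|_{0,E}^2\lesssim\|w\|_2^2$, and these are exactly what produce the $\eta$-scaling. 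Without this interpolant step your argument does not go through. A secondary remark: the weight $(\epsilon+h_K^4)^{1/2}$ in $S$ arises from pairing $(g-u_h-\epsilon\lambda_h)_+$ against $\lambda_h-\lambda_{h/2}$, whose weighted $L^2$-norm is controlled by $\vertiii{(\cdot,\cdot)}_{h/2}$, not from pairing against $w_h$ as you state.
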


\begin{proof}
 Let $w\in V_{h/2}$ be the function corresponding to $(u_{h/2}-u_h,\lambda_{h/2}-\lambda_h)\in V_{h/2}\times Q_{h/2}$ in the discrete stability estimate \eqref{discrstab} for which is holds, in particular, that
 \begin{equation}
 \|w\|_2 \lesssim \vertiii{(u_{h/2}-u_h,\lambda_{h/2}-\lambda_h)}_{h/2} .
 \label{waux}
 \end{equation}
  Let, moreover, $\widetilde{w}\in V_h$ denote the Hermite type interpolant of $w\in V_{h/2}$. By scaling, one readily shows that
\begin{equation}\begin{array}{l}
 \displaystyle   \sum_{K \in \Ch} h_K^{-4} \|w-\widetilde{w}\|_{0,K}^2 +\sum_{E \in \Ehint} h_E^{-1}\|\nabla(w-\widetilde{w})\|_{0,E}^2\\
   \displaystyle   \quad+\sum_{E \in \Ehint}  h_E^{-3}\|w-\widetilde{w}\|_{0,E}^2 \lesssim \|w\|_2^2 \qquad \text{and} \qquad \|\widetilde{w}\|_2 \lesssim  \|w\|_2.
  \end{array}\label{hermite}
  \end{equation}

    The discrete problem statement implies that
    \begin{equation}
        0 \leq - \Bh(u_h,\lambda_h;-\widetilde{w},0) + \Lh(-\widetilde{w},0).
        \label{discraux}
    \end{equation}
   From \eqref{discrstab}, \eqref{stabform} and \eqref{discraux} it then follows that
 \begin{align*}
\vertiii{(u_{h/2}-u_h,\lambda_{h/2}-\lambda_h)}_{h/2}^2 & \lesssim 
\mathcal{B}_{h/2}(u_{h/2}-u_h,\lambda_{h/2}-\lambda_h;w,\lambda_h-\lambda_{h/2}) \\ &  \leq 
\mathcal{L}_{h/2}(w,\lambda_h-\lambda_{h/2}) -\mathcal{B}_{h/2}(u_h,\lambda_h;w,\lambda_h-\lambda_{h/2})\\ \qquad &\quad  - \mathcal{B}_h(u_h,\lambda_h;-\widetilde{w},0)+\mathcal{L}_h(-\widetilde{w},0) \\ &= (f,w-\widetilde{w}) 
 -a(u_h,w-\widetilde{w})+\langle w-\widetilde{w},\lambda_h\rangle \\
 & \quad + \langle u_h+\epsilon\lambda_h-g,\lambda_h-\lambda_{h/2}\rangle\\ 
& \quad +\alpha \sum_{K^\prime\in \mathcal{C}_{h/2}} h_{K^\prime}^4 \left( \mathcal{A}(u_h)-\lambda_h-f,\mathcal{A}(w)\right)_{K^\prime}
\\ &\quad - \alpha \sum_{K^\prime\in \mathcal{C}_{h/2}} h_{K^\prime}^4 \left( \mathcal{A}(u_h)-\lambda_h-f,\lambda_h-\lambda_{h/2}\right)_{K^\prime}
\\ &\quad  - \alpha
 \sum_{K\in \mathcal{C}_{h}} h_K^4 \left( \mathcal{A}(u_h)-\lambda_h-f,\mathcal{A}(\widetilde{w})\right)_K
\end{align*}

Using formula \eqref{intparts} to integrate by parts in $a(u_h,w-\widetilde{w})$, we obtain
    \begin{align*}
        & (f,w-\widetilde{w}) - a(u_h,w-\widetilde{w}) + \langle  w-\widetilde{w},\lambda_h \rangle \\
                 &= \sum_{K \in \Ch} \left(-\A(u_h)+\lambda_h+f,w-\widetilde{w}\right)_K \\
                   &\phantom{=} + \sum_{E \in \Ehint} \Big\{ \left(\llbracket \Mom(u_h) \boldsymbol{n} \rrbracket,\nabla(w-\widetilde{w})\right)_E 
      - (\llbracket \Shear(u_h)\cdot \boldsymbol{n} \rrbracket,w-\widetilde{w})_E \Big\} \\
        &= \sum_{K \in \Ch} \left(-\A(u_h)+\lambda_h+f,w-\widetilde{w}\right)_K \\        &\phantom{=} + \sum_{E \in \Ehint} \Big\{ \left(\llbracket M_{nn}(u_h) \rrbracket,\nabla(w-\widetilde{w}) \cdot \boldsymbol{n}\right)_E  - (\llbracket V_n(u_h) \rrbracket,w-\widetilde{w})_E \Big\} .
    \end{align*}
    These terms are easily bounded using the Cauchy--Schwarz inequality and the interpolation estimates \eqref{hermite}. 
    
    On the other hand,  dividing $u_h+\epsilon\lambda_h-g$ into its positive and negative part, we obtain the estimate
    \begin{align*}
 &\langle u_h+\epsilon\lambda_h-g, \lambda_h-\lambda_{h/2}\rangle \\
 & \quad  \leq \big((u_h+\epsilon\lambda_h-g)_+, \lambda_h\big) +
 \big( (u_h+\epsilon\lambda_h-g)_-, \lambda_h-\lambda_{h/2}\big) \\
 & \quad \leq  \big( (u_h+\epsilon\lambda_h-g)_+, \lambda_h\big) \\ 
 & \qquad + \left( \sum_{K\in \mathcal{C}_{h}} \frac{1}{\epsilon+h_K^4} \| (u_h+\epsilon\lambda_h-g)_-\|_{0,K}^2\right)^{1/2} \, 
\vertiii{(u_{h/2}-u_h,\lambda_{h/2}-\lambda_h)}_{h/2}
  \end{align*}

For the stabilising terms, we obtain the bounds
    \begin{align*}
        &\sum_{K \in \Ch} h_K^4(-\A(u_h)+\lambda_h+f,\A(\widetilde{w}))_K \\
        &\leq \sum_{K \in \Ch} h_K^4 \|\A(u_h)-\lambda_h-f\|_{0,K} \|\A(\widetilde{w})\|_{0,K} \\
                                                                                &\leq \left(\sum_{K \in \Ch} h_K^4 \|\A(u_h)-\lambda_h-f\|_{0,K}^2\right)^{1/2} 
                                                                                \left(\sum_{K \in \Ch} h_K^4 \|\A(\widetilde{w})\|_{0,K}\right)^{1/2} \\
                                                                                &\lesssim \left(\sum_{K \in \Ch} h_K^4 \|\A(u_h)-\lambda_h-f\|_{0,K}^2\right)^{1/2} \|w\|_2\\
                                                                                    &\lesssim \left(\sum_{K \in \Ch} h_K^4 \|\A(u_h)-\lambda_h-f\|_{0,K}^2\right)^{1/2} \vertiii{(u_{h/2}-u_h,\lambda_{h/2}-\lambda_h)}_{h/2},
       \end{align*}      
             \begin{align*}                                                                   
      &\sum_{K^\prime\in \mathcal{C}_{h/2}} h_{K^\prime}^4 \left( \mathcal{A}(u_h)-\lambda_h-f,\A(\widetilde{w})-(\lambda_h-\lambda_{h/2})\right)_{K^\prime}\\
        &\lesssim \left(\sum_{K \in \Ch} h_K^4 \|\A(u_h)-\lambda_h-f\|_{0,K}^2\right)^{1/2} \vertiii{(u_{h/2}-u_h,\lambda_{h/2}-\lambda_h)}_{h/2},
    \end{align*}
where we have used the inverse inequality \eqref{inverse} and the interpolation estimates \eqref{hermite}.   

 The assertion follows after completing the square, using again the estimates \eqref{hermite} and   \eqref{waux} and observing that
 \[
  \vertiii{(u-u_h,\lambda-\lambda_h)}  \leq   \vertiii{(u-u_h,\lambda-\lambda_h)}_h \leq \frac{1}{1-\beta} \vertiii{(u_{h/2}-u_h,\lambda_{h/2}-\lambda_h)}_{h/2} .
  \]
\end{proof}

\section{A practical solution algorithm}

\label{sec:nitsche}

The approximation properties of the primal variable and
the Lagrange multiplier are balanced when the polynomial order of the latter 
is four degrees smaller than that of the
displacement variable, for example, when the Argyris element is
 coupled with a piecewise linear and discontinuous approximation of the Lagrange multiplier. It is,
however, unnecessary to actually solve for the Lagrange multiplier since it
can be eliminated from the stabilised formulation altogether. This approach is
analogous to the derivation of  Nitsche's method for Dirichlet boundary
conditions (cf.~\cite{Stenberg75}) and hence we refer to the proposed
method as \emph{Nitsche's method for the Kirchhoff plate obstacle problem}.

Nitsche's method can be derived in two steps.  First, testing with $(0,-\mu_h)$ in the  stabilised formulation \eqref{stabform},
leads to the following elementwise expression for the Lagrange multiplier
\begin{equation*}
    \lambda_h|_K = \frac{1}{\epsilon+ \alpha h_K^4} \left(\pi_h g|_K - \pi_h u_h|_K + \alpha h_K^4(\pi_h \A(u_h) - \pi_h f)|_K \right)_+, \quad \forall K \in \Ch,
\end{equation*}
where $\pi_h : L^2(\Omega) \rightarrow Q_h$ is the $L^2$ projection.
Let the function $\mathcal{H} \in L^2(\Omega)$ be such that $\mathcal{H}|_K = h_K$, $\forall K \in \Ch$.
Then, testing with $(v_h,0)$, substituting  the formula for $\lambda_h$ in the resulting  expression and
choosing $Q_h = L^2(\Omega)$,  gives the following nonlinear variational problem:

\begin{prob}[Nitsche's method for Problem 1]
    \label{prob:nitsche}
    Find $u_h \in V_h$ such that
    \begin{equation}
        a_h(u_h, v_h; u_h) = l_h(v_h; u_h), \quad \forall v_h \in V_h,
        \label{nitsche-eqs}
    \end{equation}
    where
    \begin{align*}
        a_h(u_h,v_h; w_h)&=a(u_h,v_h) + \left(\tfrac{1}{\epsilon+ \alpha \mathcal{H}^4} u_h, v_h \right)_{\Omega_C(w_h)} - \left(\tfrac{\alpha \mathcal{H}^4}{\epsilon+ \alpha \mathcal{H}^4} \A(u_h), v_h\right)_{\Omega_C(w_h)} \\
                         &\quad- \left(\tfrac{\alpha \mathcal{H}^4}{\epsilon+ \alpha \mathcal{H}^4} u_h, \A(v_h)\right)_{\Omega_C(w_h)} - \left(\tfrac{\epsilon\alpha \mathcal{H}^4}{\epsilon+ \alpha \mathcal{H}^4} \A(u_h), \A(v_h) \right)_{\Omega_C(w_h)} \\
                         &\quad- \left(\alpha \mathcal{H}^4 \A(u_h), \A(v_h)\right)_{\Omega \setminus \Omega_C(w_h)}, \\
        l_h(v_h; w_h)&= (f,v_h) + \left( \tfrac{1}{\epsilon+ \alpha \mathcal{H}^4} g, v_h\right)_{\Omega_C(w_h)} - \left(\tfrac{\alpha \mathcal{H}^4}{\epsilon+ \alpha \mathcal{H}^4} g, \A(v_h)\right)_{\Omega_C(w_h)} \\
                     &\quad - \left( \tfrac{\alpha \mathcal{H}^4}{\epsilon+ \alpha \mathcal{H}^4} f, v\right)_{\Omega_C(w_h)} - \left( \tfrac{\epsilon\alpha \mathcal{H}^4}{\epsilon+ \alpha \mathcal{H}^4} f, \A(v_h) \right)_{\Omega_C(w_h)} \\
                     &\quad - (\alpha \mathcal{H}^4 f, \A(v_h))_{\Omega \setminus \Omega_C(w_h)}.
    \end{align*}
  The contact set $   \Omega_C(w_h) $ above is defined as
    \begin{equation*}
        \Omega_C(w_h) = \{ (x,y) \in \Omega : F(w_h) > 0 \},
    \end{equation*}
    with $F(w_h) $ denoting the reaction force  given by
    \begin{equation*}
        F(w_h) = \frac{1}{\epsilon+ \alpha \mathcal{H}^4} \left(g - w_h + \alpha \mathcal{H}^4(\A(w_h) - f) \right)_+.
    \end{equation*}
\end{prob}

The practical solution algorithm for Problem~\ref{prob:nitsche} is an iterative
process where at each step the contact set $\Omega_C$ is approximated using the
displacement field from the previous iteration so that system \eqref{nitsche-eqs} becomes linear.
The  process is terminated as soon as  the norm of the displacement field
is below a predetermined tolerance $TOL>0$. The stopping criterion is formulated
with respect to the strain energy norm
\begin{equation}
    \|w\|_E = \sqrt{a(w,w)}.
\end{equation}

\begin{algorithm}
    \caption{Nitsche's method, with contact iterations}
    \label{alg:nitsche}
    \begin{algorithmic}[1]
        \STATE $k \leftarrow 0$
        \WHILE{$k<1$ or $\|u_h^{k} - u_h^{k-1}\|_E \leq TOL$}
        \STATE Find $u_h^{k+1} \in V_h$ s.t.~$a_h(u_h^{k+1},v_h;u_h^k) = l_h(v_h;u_h^k)$, $\forall v_h \in V_h$.
        \STATE $k \leftarrow k + 1$
        \ENDWHILE
        \RETURN $u_h^{k}$
    \end{algorithmic}
\end{algorithm}

For a discussion regarding the  convergence of  iterations in
Algorithm~\ref{alg:nitsche}, we refer to \cite{GSV16} where we compare this approach to the semismooth Newton
method for solving a stabilised second-order obstacle problem. We point out that the semismooth Newton
method (see e.g.~\cite{Wohlmuth2011}) corresponds to an algorithm, similar to Algorithm~\ref{alg:nitsche},
where the contact area follows element boundaries. Hence, we
expect Algorithm~\ref{alg:nitsche} to behave numerically  as a
semismooth \mbox{Newton-type} strategy applied to variational inequalities.

For an adaptive refinement, we use the maximum strategy with the
parameter $\theta \in (0,1)$ for marking and the red-green-blue refinement, see
e.g.~\cite{VerfurthBook,bartels2015numerical}. The error estimator is defined as
\begin{equation}
    \mathcal{E}_K^2 = \eta_K^2 + \frac12 \sum_{E \subset K} \eta_E^2 + ((u_h - g + \epsilon \lambda_h)_+, \lambda_h)_K + S_{K,\epsilon }^2,
\end{equation}
where
\begin{equation}
    S_{K,\epsilon } =   \frac{1}{\sqrt{\epsilon+h_K^4}} \| (g-u_h-\epsilon\lambda_h)_+\|_{0,K}
\end{equation}
Given the displacement field $u_h$, the reaction force $\lambda_h = F(u_h)$
is computed
as indicated in Problem~\ref{prob:nitsche}.
We start with an initial mesh $\Ch^0$ and terminate
the computation after a predetermined number of adaptive refinement steps $M$.
The resulting procedure is summarised in the listing Algorithm~\ref{alg:adaptive}.

\begin{algorithm}[h]
    \caption{The adaptive Nitsche's method}
    \label{alg:adaptive}
    \begin{algorithmic}[1]
        \STATE $j \leftarrow 0$
        \WHILE{$j < M$}
        \STATE Solve $u_h^{j+1}$ using Algorithm~\ref{alg:nitsche} and the mesh $\Ch^j$.
        \STATE Evaluate the error estimator $\mathcal{E}_K$ for every $K \in \Ch^j$.
        \STATE Using the red-green-blue refinement strategy~\cite{VerfurthBook,bartels2015numerical}, construct $\Ch^{j+1}$ by refining the elements $K$
        that satisfy the inequality
        \begin{equation*}
            \mathcal{E}_K > \theta \max_{K^\prime \in \Ch^j} \mathcal{E}_{K^\prime}.
        \end{equation*}
        \STATE $j \leftarrow j + 1$
        \ENDWHILE
    \end{algorithmic}
\end{algorithm}

\section{Numerical results}

\label{sec:numerical}

We illustrate the performance of the proposed algorithms by solving two example
problems and comparing the uniform and adaptive meshing. The adaptive method is expected to recover the optimal rate of convergence
with respect to the number of degrees of freedom $N$, i.e.
\[
   \vertiii{(u-u_h,\lambda-\lambda_h)} ~~\propto~~ N^{-\tfrac{k-1}{2}},
\]
where $k$ is the polynomial order of the finite element basis.
As a measure of error we use the global estimator $\eta + S$.
We expect that, asymptotically, it holds
\[
    \vertiii{(u-u_h,\lambda-\lambda_h)} ~~\propto~~ \eta + S.
\]

Let $\Omega = [0,1]^2$ and let $\Ch$ be a triangulation of $\Omega$.  The finite
element space for the displacement field  consists of a set of piecewise  polynomials of order five, i.e.
\[
    V_h = \{ w \in H^2_0(\Omega) : w|_{K} \in P_5(K)~\forall K \in \Ch \}.
\]
The global $C^1$-continuity 
is conceived  by implementing the Argyris basis functions, c.f.~\cite{CiarletBook}.
In both examples, the loading function and  the material parameters are chosen as
$\mbox{$f=-10$}$, $d=1$, $E=1$ and $\nu=0$.  For the parameters $\alpha$,
$TOL$ and $\theta$, we use the values  $\alpha = 10^{-5}$, $TOL = 10^{-10}$ and
$\theta=0.5$. In each case, we start with the  mesh shown in the upper left
panel of Figure~\ref{fig:meshrigid} and apply either a uniform refinement (each
triangle is split into four subtriangles) or Algorithm~\ref{alg:adaptive} with
$M=5$.

The first example is that of a rigid obstacle, $\epsilon = 0$, with
its shape defined by
\begin{equation}
    g(x,y) = -100\left((x-0.5)^2 + (y-0.5)^2\right).
\end{equation}
This obstacle is smooth and hence it belongs to $H^2(\Omega)$ as required by the continuous
formulation. Nevertheless, its shape is sharp due to the moderately large negative
coefficient. Qualitatively, the plate behaves subject to this type of obstacle as it
would under a point load and we expect the error estimator to be large
near the midpoint $(0.5;0.5)$. 

The resulting sequence of adaptive meshes is depicted in Figure~\ref{fig:meshrigid}
and the respective global errors can be found in Figure~\ref{fig:resultsrigid}. The discrete
solution and the Lagrange multiplier, after three adaptive refinements, are shown  in Figures~\ref{fig:discrete}(A) and \ref{fig:lagmult}(A). The discrete functions are visualised in a refined mesh as they may have
high-order and non-smooth variations inside the elements.  The results of
Figure~\ref{fig:resultsrigid} clearly indicate that the adaptive
method gains the optimal rate of convergence $\mathcal{O}(N^{-2})$ whereas the uniform
refinement is observed to be $\mathcal{O}(N^{-1/2})$. Note that if the numerical contact region was larger, for example using a less sharp obstacle, the convergence rate would become limited by the regularity of the solution,  i.e. with uniform refinement  eventually  by $\mathcal{O}(N^{-3/4})$. 

In the second example, we consider an elastic obstacle ($\epsilon > 0$) defined by the function
\begin{equation}
    g(x,y) = \begin{cases}
        0, & \text{if $(x,y) \in [0.3;0.7]^2$,} \\
        -1, & \text{otherwise.}
    \end{cases}
\end{equation}
Note that $g \in L^2(\Omega)$ but  \mbox{$g \not\in H^1(\Omega)$}.
Computing the cases $\epsilon = 10^{-j}$, $j \in \{3,4,5,6\}$, we observe that
the behaviour of the reaction force varies quite much from case to case as
revealed by the discrete Lagrange multipliers depicted in
Figure~\ref{fig:lagmult}
and by the discrete contact sets  shown in Figure~\ref{fig:contactset}.
In particular, the contact sets corresponding to less rigid obstacles remain simply
connected which is not the case for the stiffer obstacles.

The resulting error graphs for the adaptive and uniform
refinements can be found in Figure~\ref{fig:elastic}.
The sequences of adaptive meshes can be found in Figures~\ref{fig:meshe6}--\ref{fig:meshe3}.
We observe that for uniform refinements the slope of the error graph is
getting worse when the obstacle is stiffened and that the adaptive meshing
strategy successfully recovers the optimal rate of convergence $\mathcal{O}(N^{-2})$, independently of the
value of $\epsilon $.

\pgfplotstableread{
    ndofs estimator
    206 13.8202069961
    694 6.88266076402
    2534 3.41938921704
}\uniformsquare

\pgfplotstableread{
    ndofs estimator
    206 13.7709177269
    422 6.85949029861
    638 3.41300123948
    854 1.70023134328
    1070 0.852819557242
}\adaptivesquare

\begin{figure}
    \includegraphics[width=0.45\textwidth]{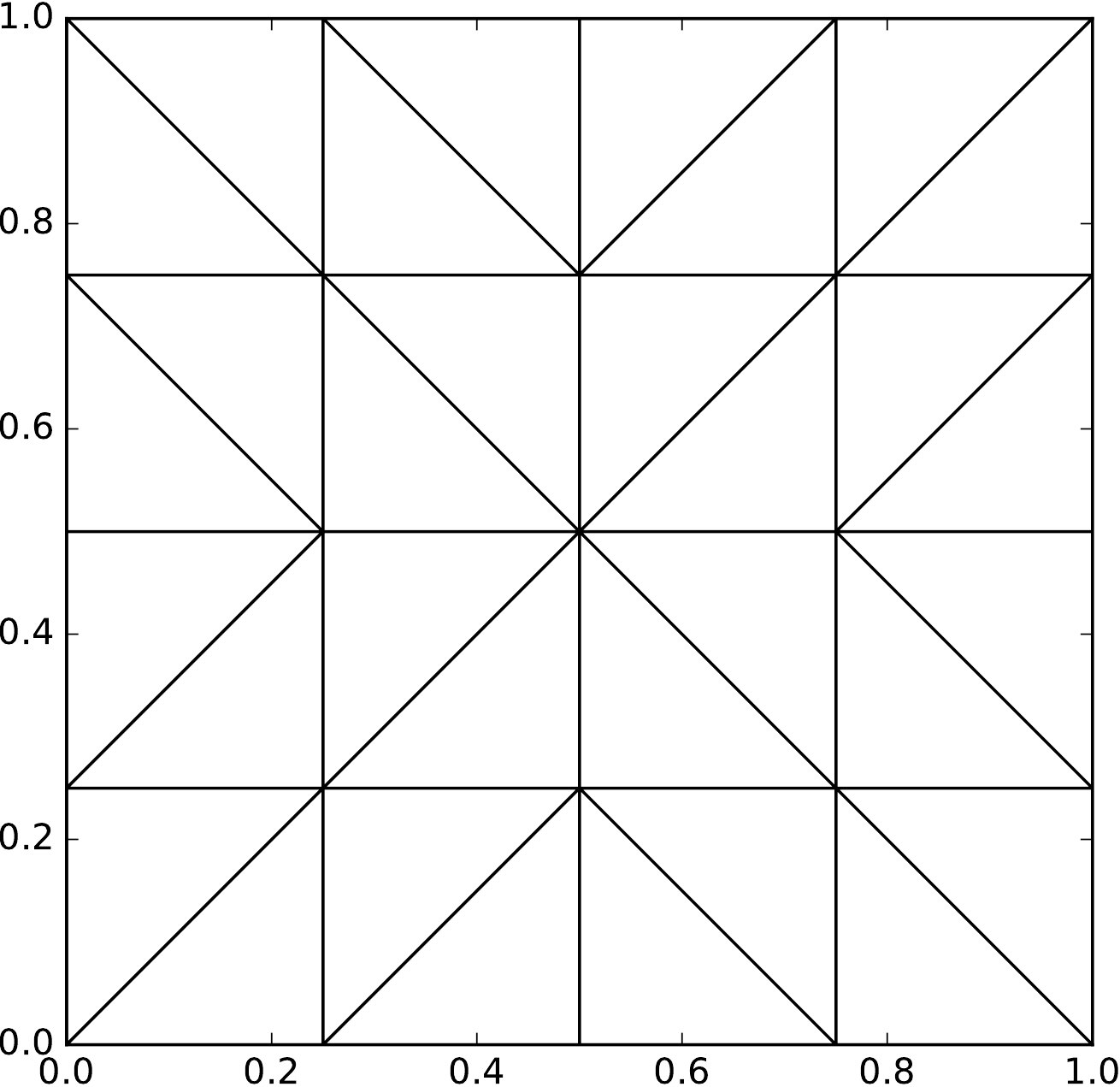}
    \includegraphics[width=0.45\textwidth]{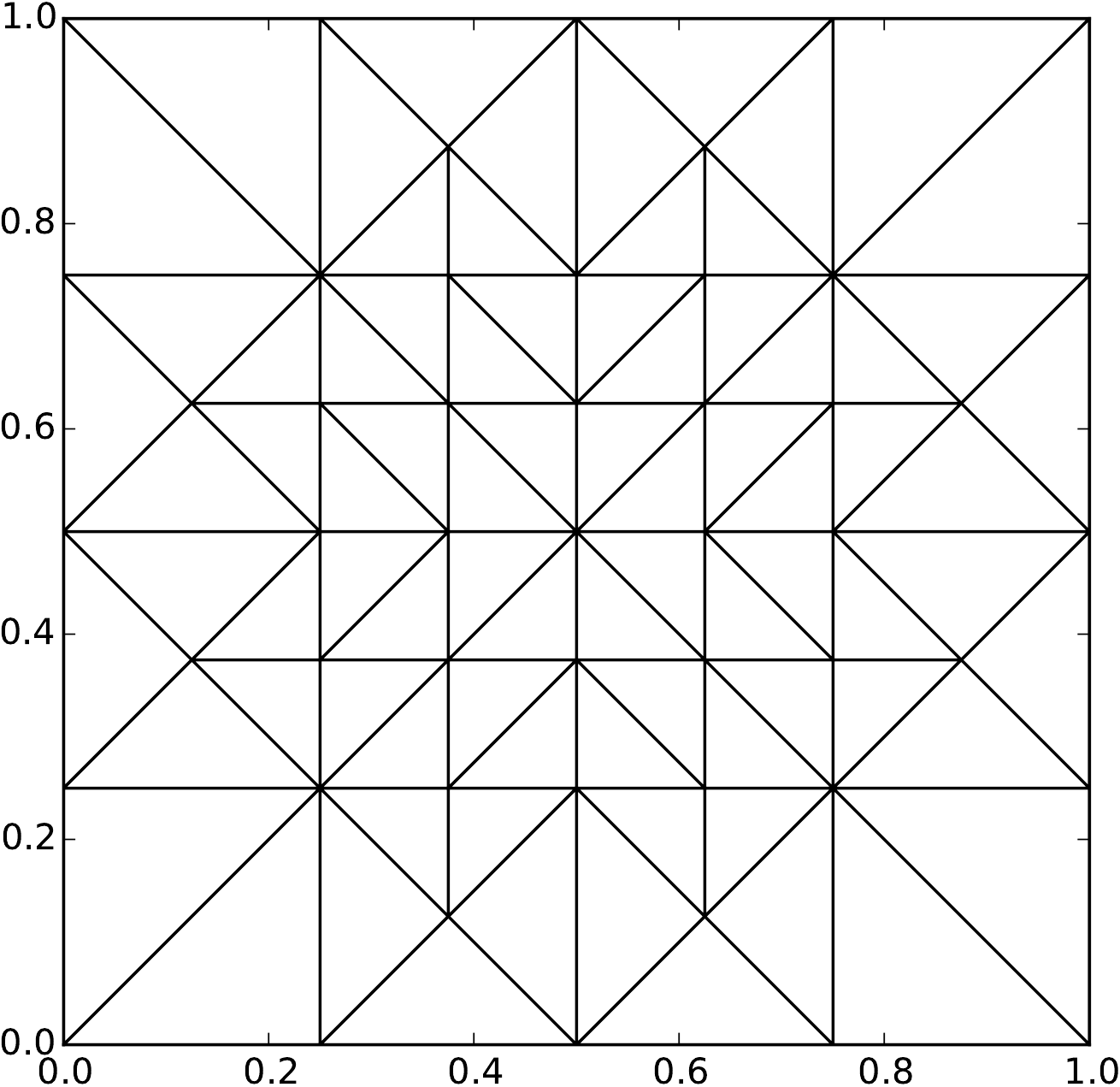}\\[0.5cm]
    \includegraphics[width=0.45\textwidth]{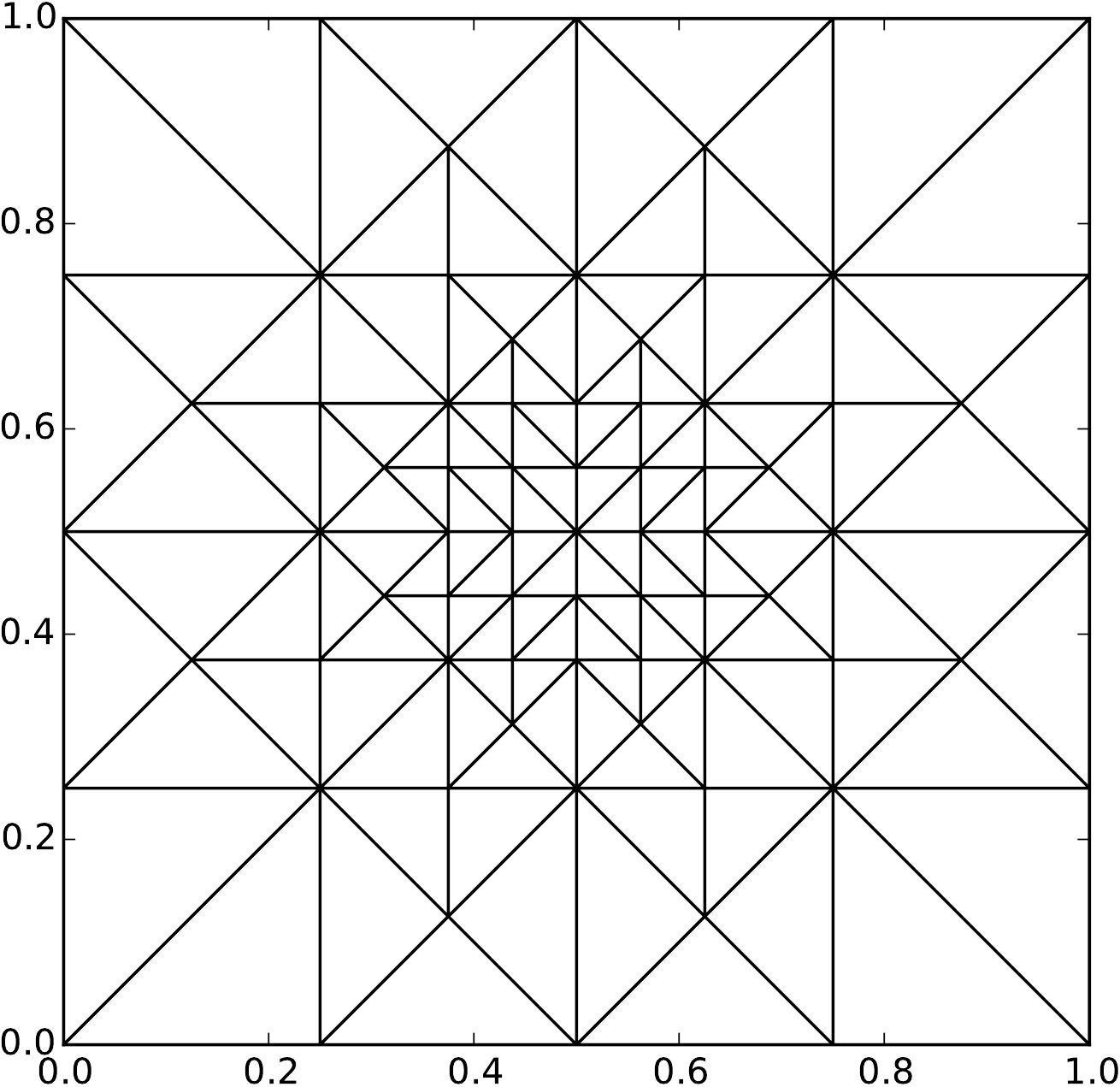}
    \includegraphics[width=0.45\textwidth]{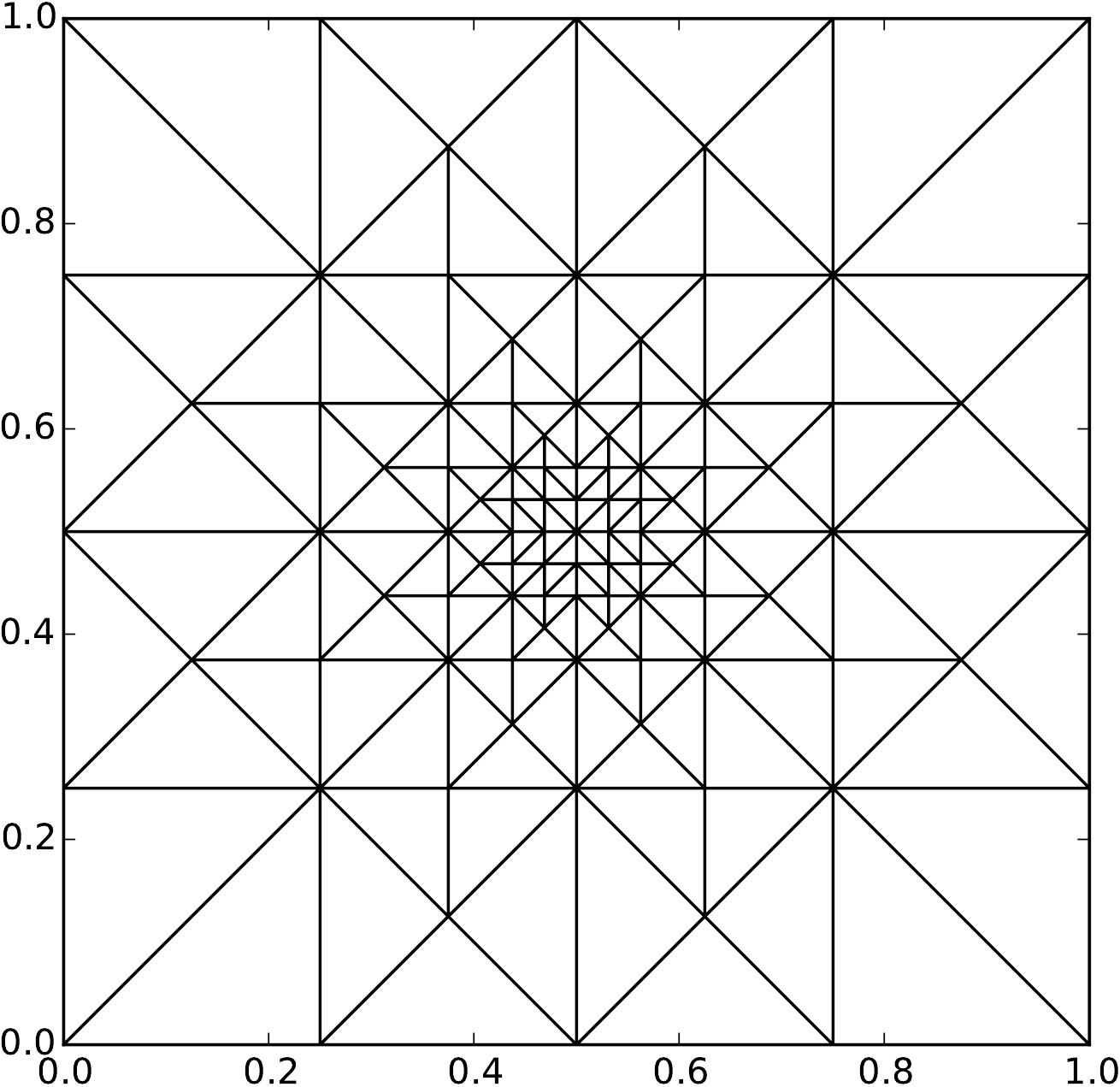}\\[0.5cm]
    \includegraphics[width=0.45\textwidth]{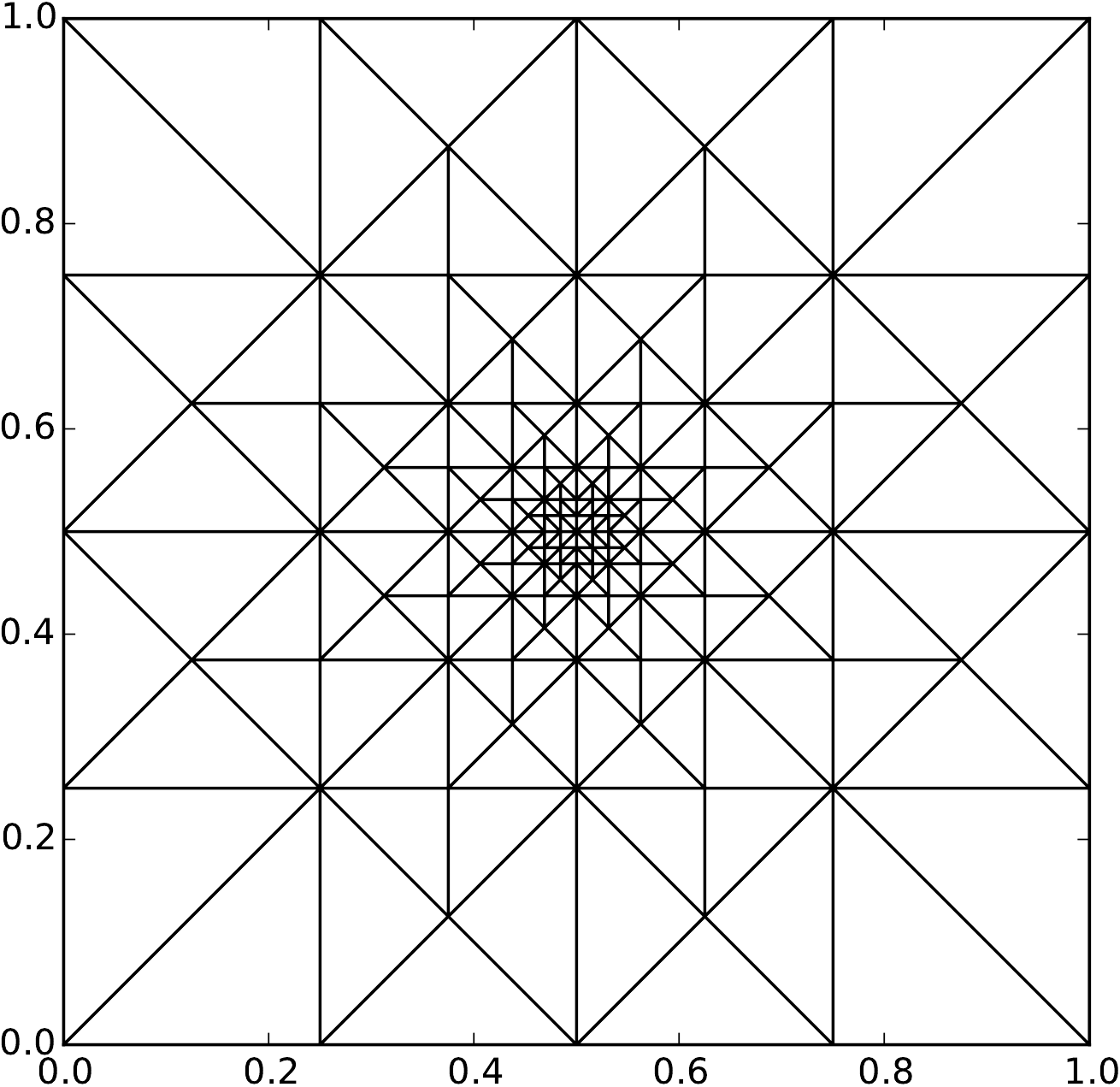}
    \includegraphics[width=0.45\textwidth]{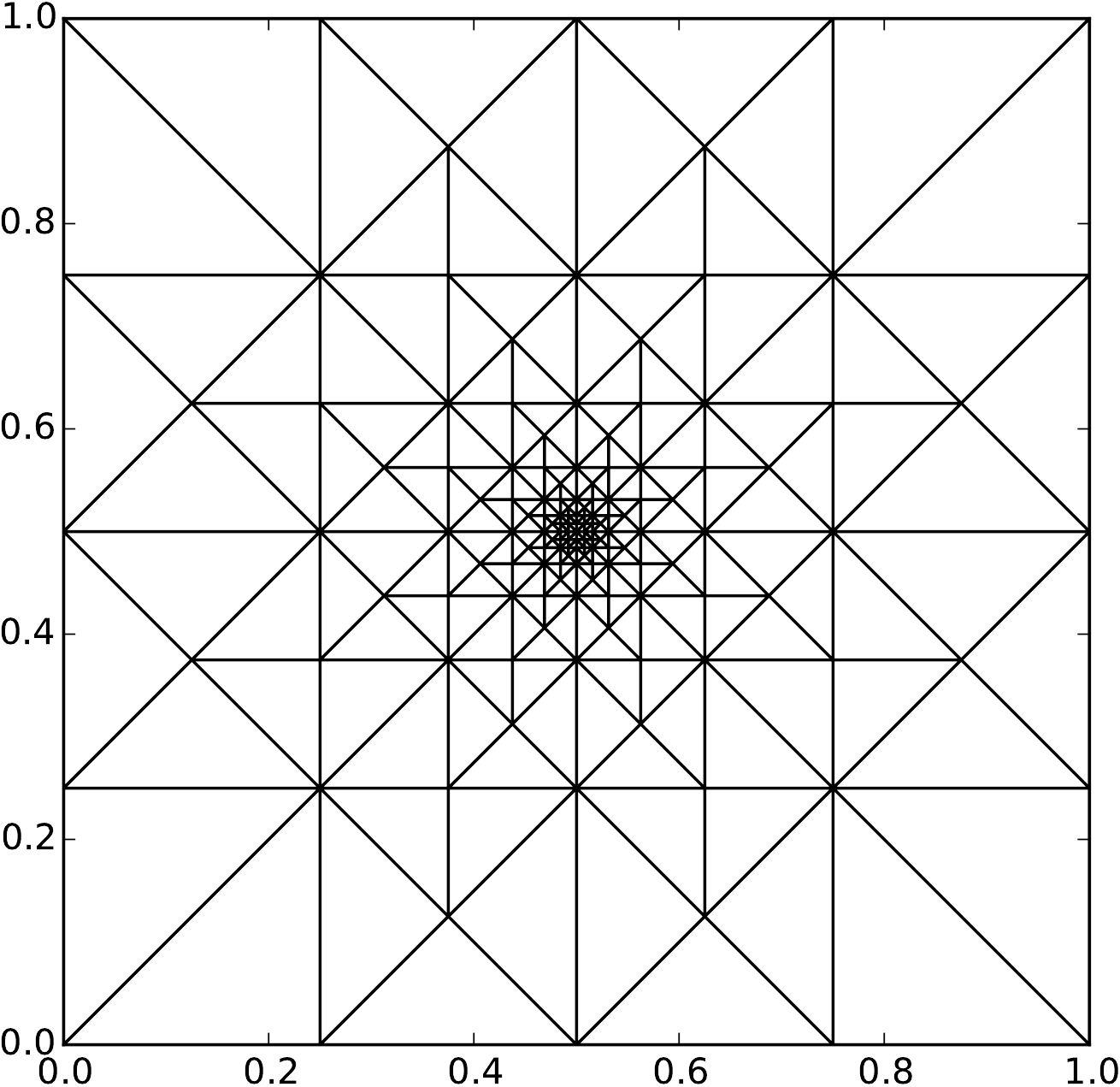}
    \caption{The sequence of adaptive meshes in  the rigid obstacle case.}
    \label{fig:meshrigid}
\end{figure}

\begin{figure}[h!]
    \centering
    \begin{tikzpicture}[scale=0.9]
        \begin{axis}[
                xmode = log,
                ymode = log,
                xlabel = {$N$},
                ylabel = {$\eta + S$},
                grid = both
            ]
            \addplot table[x=ndofs,y=estimator] {\adaptivesquare};
            \addplot table[x=ndofs,y=estimator] {\uniformsquare};
            \addplot+ [black, domain=1000:2000, mark=none, dashed] {exp(-0.5*ln(x) + ln(4) - (-0.5)*ln(1000)))} node[below,pos=1.0]{$\mathcal{O}(N^{-1/2})$};
            \addplot+ [black, domain=400:600, mark=none, dashed] {exp(-2.0*ln(x) + ln(4) - (-2)*ln(400)))} node[below,pos=1.0]{$\mathcal{O}(N^{-2})$};
            \addlegendentry{Adaptive}
            \addlegendentry{Uniform}
        \end{axis}
    \end{tikzpicture}
    \caption{The global error estimator, plotted as a
    function of the number of degrees of freedom $N$, in the rigid obstacle case. The optimal rate of
    convergence for the Argyris element, $\mathcal{O}(N^{-2})$,
    is obtained by the adaptive meshing strategy. The 
   regularity of the exact solution limits the convergence rate in uniform refinement.}
    \label{fig:resultsrigid}
\end{figure}
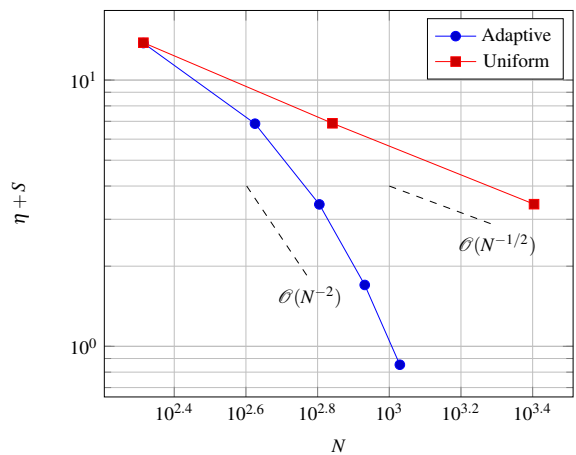

\pgfplotstableread{
    ndofs estimator
    206 2.40958145922
    694 0.604598919111
    2534 0.135130068858
}\uniformelastic

\pgfplotstableread{
    ndofs estimator
    206 0.423701391706
    694 0.0730532010207
    2534 0.0109649559129
}\uniformelasticI

\pgfplotstableread{
    ndofs estimator
    206 0.71200523092
    694 0.132325162517
    2534 0.0224916697562
}\uniformelasticII

\pgfplotstableread{
    ndofs estimator
    206 1.3029015311
    694 0.30351464721
    2534 0.0560139964278
}\uniformelasticIII

\pgfplotstableread{
    ndofs estimator
    206 1.3029015311
    422 0.393588486774
    1026 0.0964961331776
    1918 0.0337280568804
    2414 0.0203236456007
}\adaptiveeps

\pgfplotstableread{
    ndofs estimator
    206 0.712005230923
    422 0.282751982066
    594 0.132221293127
    1486 0.0270084591846
    2558 0.00959790612026
}\adaptiveepsI

\pgfplotstableread{
    ndofs estimator
    206 2.40958145922
    422 0.664886900657
    854 0.1933154288
    1782 0.0598013416591
    2070 0.0473621472658
}\adaptiveepsII

\pgfplotstableread{
    ndofs estimator
    206 0.423701391706
    594 0.0743198859836
    1378 0.0210204451689
    1946 0.0107098177271
    2874 0.00443523214802
}\adaptiveepsIII

\begin{figure}[h!]
    \centering
    \begin{tikzpicture}[scale=0.9]
        \begin{axis}[
                xmode = log,
                ymode = log,
                xlabel = {$N$},
                ylabel = {$\eta + S$},
                grid = both,
                legend pos=outer north east
            ]
            \addplot table[x=ndofs,y=estimator] {\uniformelastic};
            \addplot table[x=ndofs,y=estimator] {\uniformelasticIII};
            \addplot table[x=ndofs,y=estimator] {\uniformelasticII};
            \addplot table[x=ndofs,y=estimator] {\uniformelasticI};
            \addplot+ [black, domain=1000:2000, mark=none, dashed] {exp(-1.0*ln(x) + ln(0.6) - (-1)*ln(1000)))} node[above,pos=0.0]{$\mathcal{O}(N^{-1})$};
            \addlegendentry{Uniform, $\epsilon = 10^{-6}$}
            \addlegendentry{Uniform, $\epsilon = 10^{-5}$}
            \addlegendentry{Uniform, $\epsilon = 10^{-4}$}
            \addlegendentry{Uniform, $\epsilon = 10^{-3}$}
        \end{axis}
    \end{tikzpicture}\\
    \begin{tikzpicture}[scale=0.9]
        \begin{axis}[
                xmode = log,
                ymode = log,
                xlabel = {$N$},
                ylabel = {$\eta + S$},
                grid = both,
                legend pos=outer north east
            ]
            \addplot table[x=ndofs,y=estimator] {\adaptiveepsII};
            \addplot table[x=ndofs,y=estimator] {\adaptiveeps};
            \addplot table[x=ndofs,y=estimator] {\adaptiveepsI};
            \addplot table[x=ndofs,y=estimator] {\adaptiveepsIII};
            \addplot+ [black, domain=1000:2000, mark=none, dashed] {exp(-2.0*ln(x) + ln(2e-2) - (-2)*ln(1000)))} node[below,pos=1.0]{$\mathcal{O}(N^{-2})$};
            \addlegendentry{Adaptive, $\epsilon = 10^{-6}$}
            \addlegendentry{Adaptive, $\epsilon = 10^{-5}$}
            \addlegendentry{Adaptive, $\epsilon = 10^{-4}$}
            \addlegendentry{Adaptive, $\epsilon = 10^{-3}$}
        \end{axis}
    \end{tikzpicture}\\
    \caption{The global error estimator in the elastic case plotted as a function of the number of
    degrees of freedom $N$. The upper and lower diagrams  correspond to the uniform
    and the adaptive refinements, respectively.}
    \label{fig:elastic}
\end{figure}
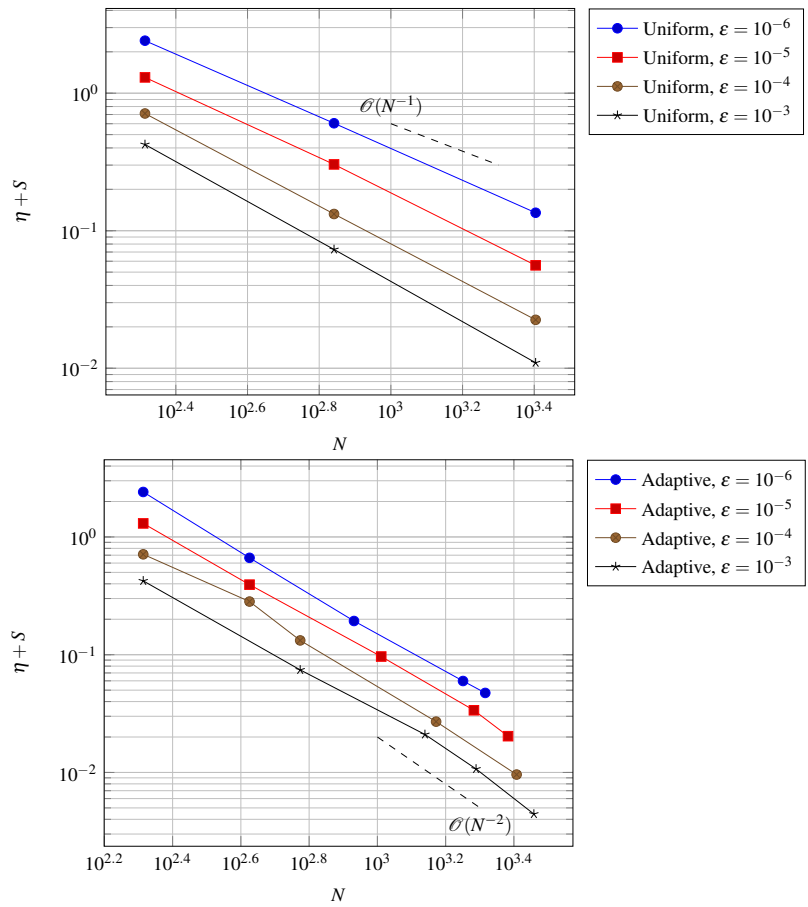

\begin{figure}
    \centering
    \begin{subfigure}{0.49\textwidth}
        \includegraphics[width=\textwidth,trim={1.5cm 0.5cm 1.5cm 0.5cm},clip]{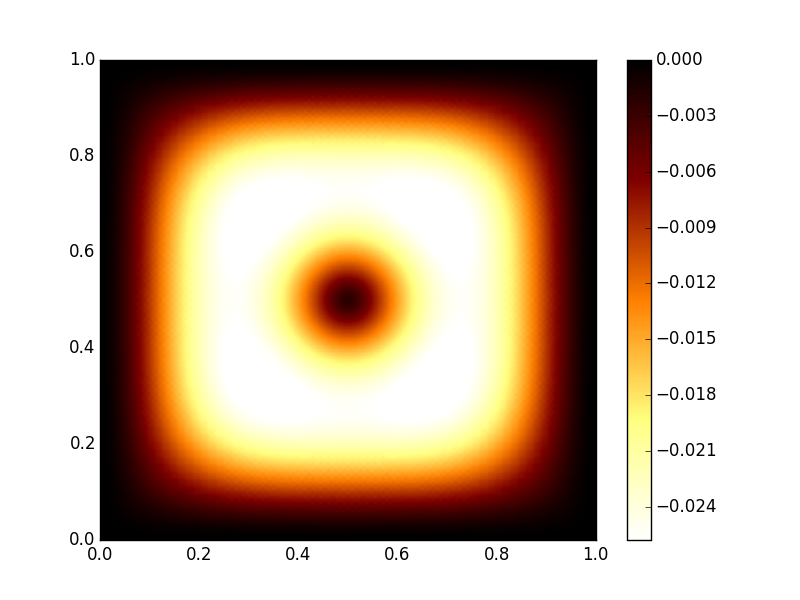}
        \caption{$\epsilon = 0$}
    \end{subfigure}\\
    \begin{subfigure}{0.49\textwidth}
        \includegraphics[width=\textwidth,trim={1.5cm 0.5cm 1.5cm 0.5cm},clip]{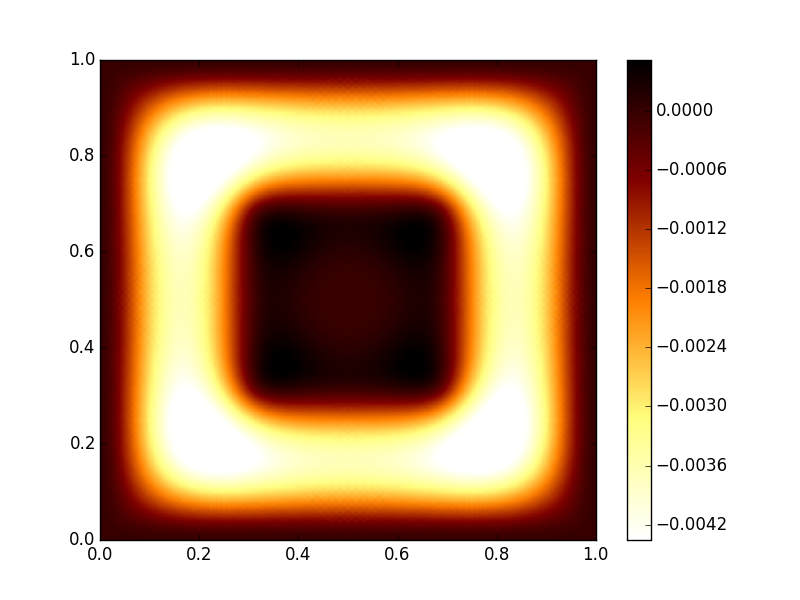}
        \caption{$\epsilon = 10^{-6}$}
    \end{subfigure}
    \begin{subfigure}{0.49\textwidth}
        \includegraphics[width=\textwidth,trim={1.5cm 0.5cm 1.5cm 0.5cm},clip]{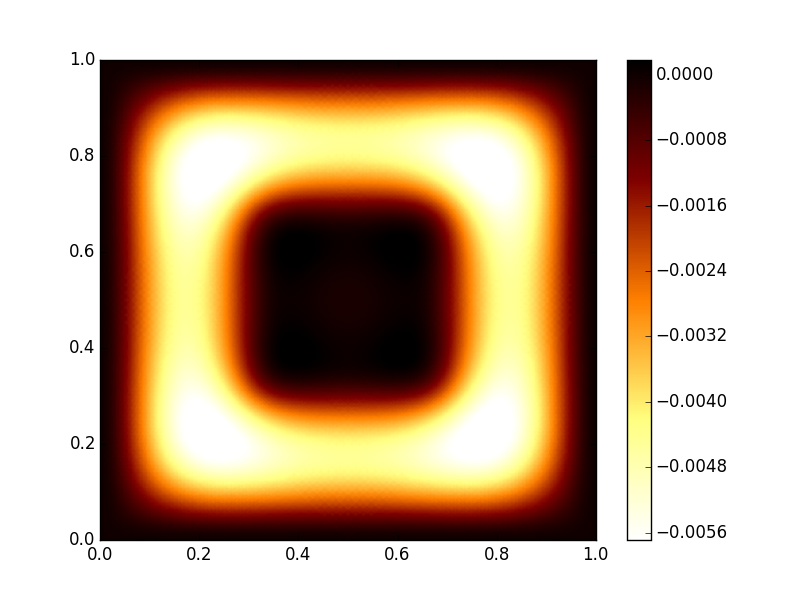}
        \caption{$\epsilon = 10^{-5}$}
    \end{subfigure}
    \begin{subfigure}{0.49\textwidth}
        \includegraphics[width=\textwidth,trim={1.5cm 0.5cm 1.5cm 0.5cm},clip]{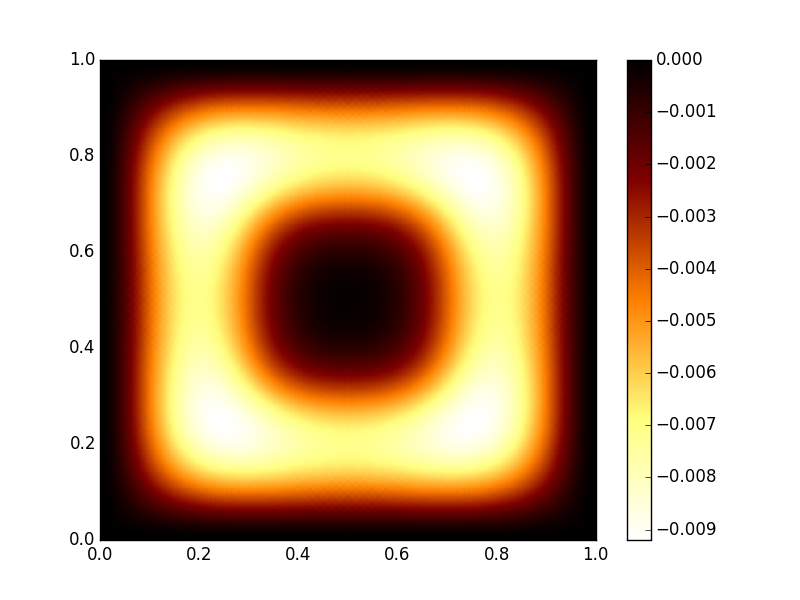}
        \caption{$\epsilon = 10^{-4}$}
    \end{subfigure}
    \begin{subfigure}{0.49\textwidth}
        \includegraphics[width=\textwidth,trim={1.5cm 0.5cm 1.5cm 0.5cm},clip]{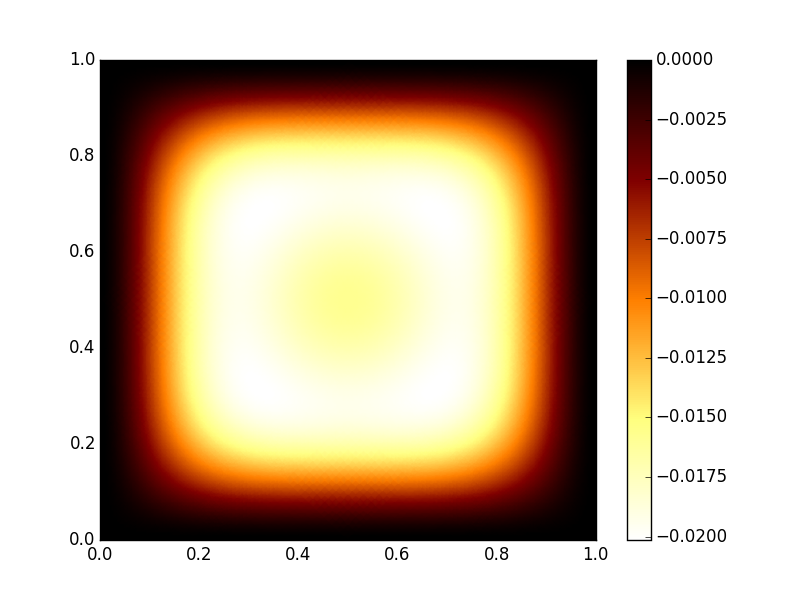}
        \caption{$\epsilon = 10^{-3}$}
    \end{subfigure}
    \caption{The discrete displacements shown for five different values of $\epsilon$ after three adaptive refinements in each  case.
    Note that the solutions are visualised on a more refined mesh.}
    \label{fig:discrete}
\end{figure}

\begin{figure}
    \centering
    \begin{subfigure}{0.49\textwidth}
        \includegraphics[width=\textwidth,trim={1.5cm 0.5cm 1.5cm 0.5cm},clip]{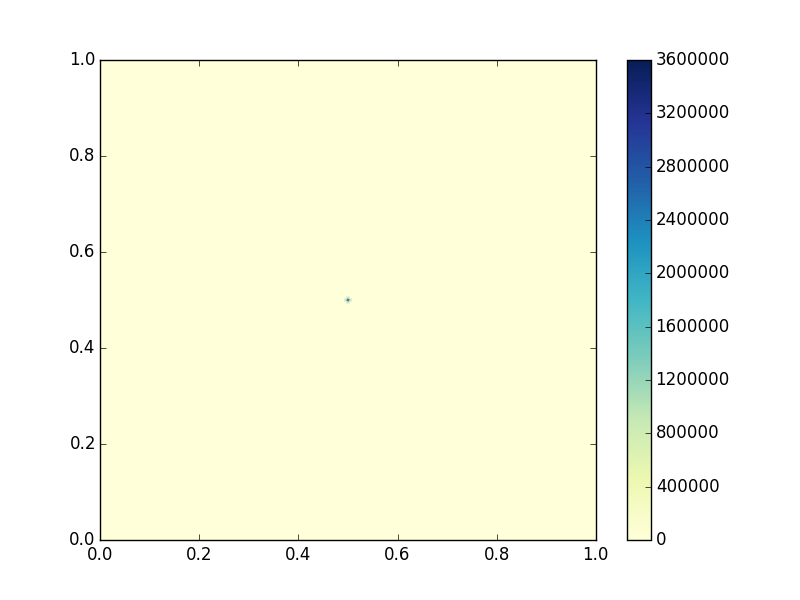}
        \caption{$\epsilon = 0$}
    \end{subfigure}\\
    \begin{subfigure}{0.49\textwidth}
        \includegraphics[width=\textwidth,trim={1.5cm 0.5cm 1.5cm 0.5cm},clip]{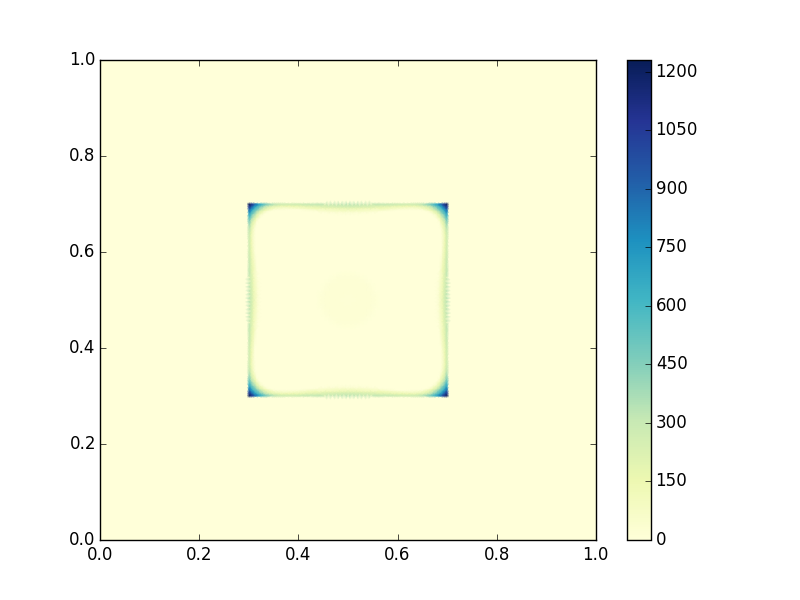}
        \caption{$\epsilon = 10^{-6}$}
    \end{subfigure}
    \begin{subfigure}{0.49\textwidth}
        \includegraphics[width=\textwidth,trim={1.5cm 0.5cm 1.5cm 0.5cm},clip]{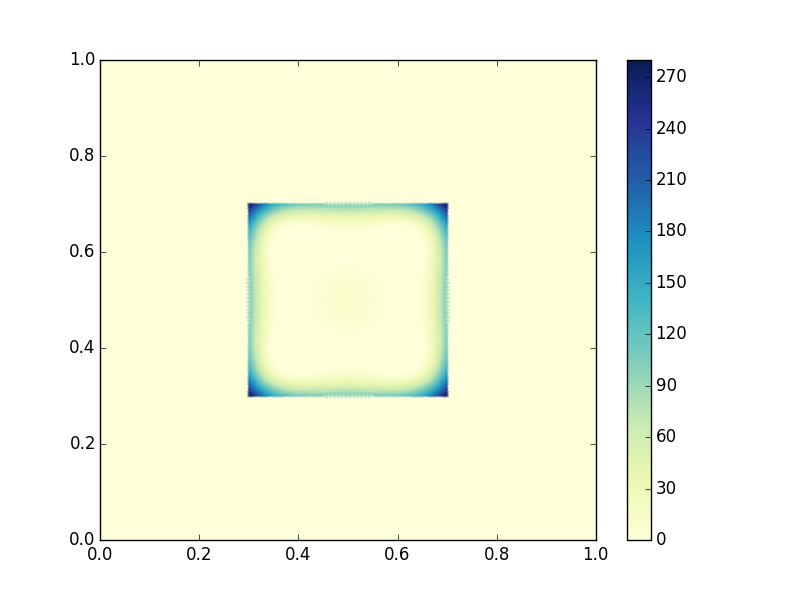}
        \caption{$\epsilon = 10^{-5}$}
    \end{subfigure}
    \begin{subfigure}{0.49\textwidth}
        \includegraphics[width=\textwidth,trim={1.5cm 0.5cm 1.5cm 0.5cm},clip]{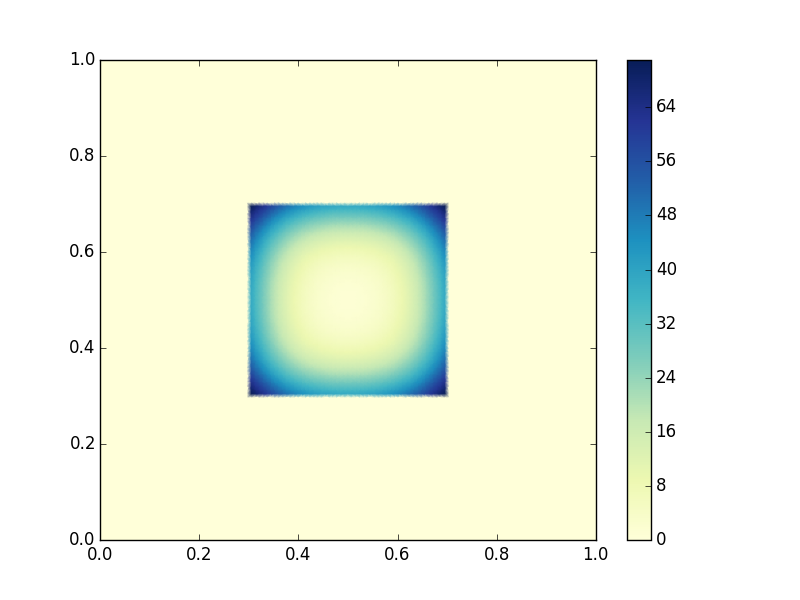}
        \caption{$\epsilon = 10^{-4}$}
    \end{subfigure}
    \begin{subfigure}{0.49\textwidth}
        \includegraphics[width=\textwidth,trim={1.5cm 0.5cm 1.5cm 0.5cm},clip]{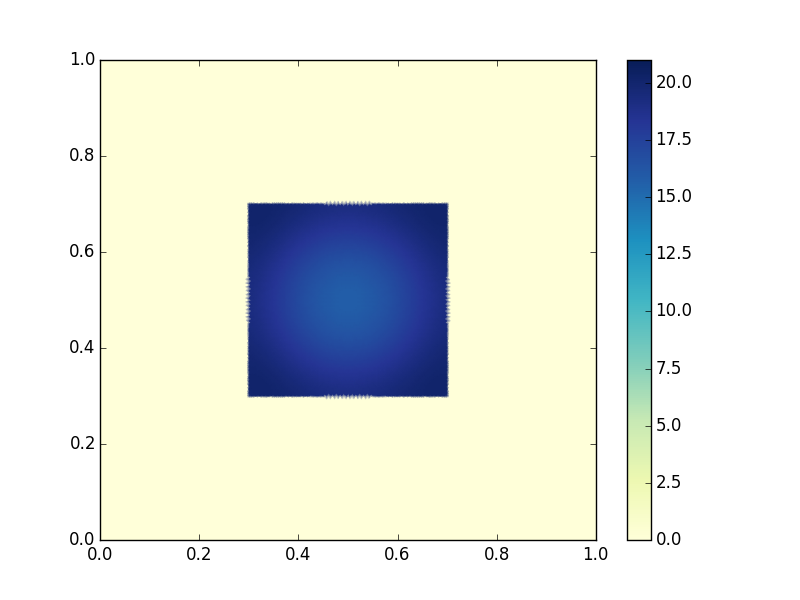}
        \caption{$\epsilon = 10^{-3}$}
    \end{subfigure}
    \caption{The discrete Lagrange multipliers shown for five different values of $\epsilon$  after three adaptive refinements in each case.
    Note that the solutions are visualised on a more refined mesh.}
    \label{fig:lagmult}
\end{figure}

\begin{figure}
    \begin{subfigure}{0.49\textwidth}
        \includegraphics[width=\textwidth,trim={2.5cm 0.5cm 2.5cm 0.5cm},clip]{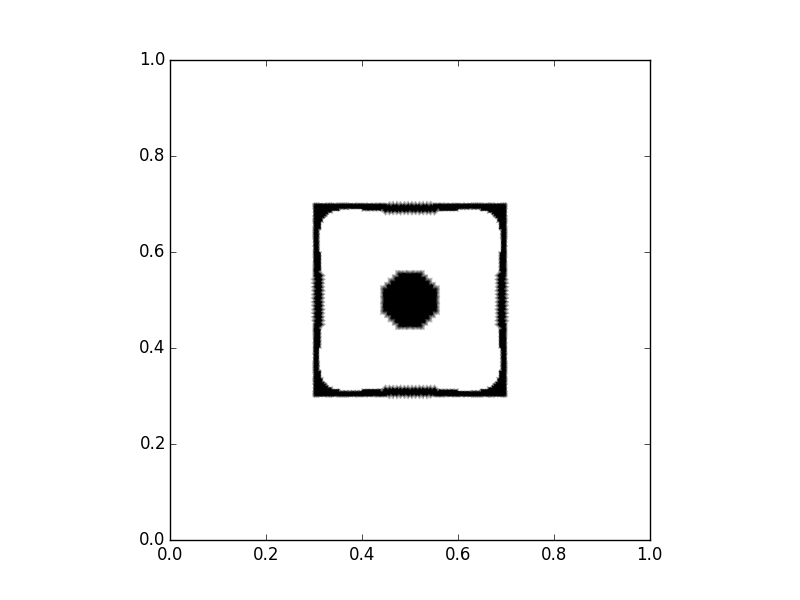}
        \caption{$\epsilon = 10^{-6}$}
    \end{subfigure}
    \begin{subfigure}{0.49\textwidth}
        \includegraphics[width=\textwidth,trim={2.5cm 0.5cm 2.5cm 0.5cm},clip]{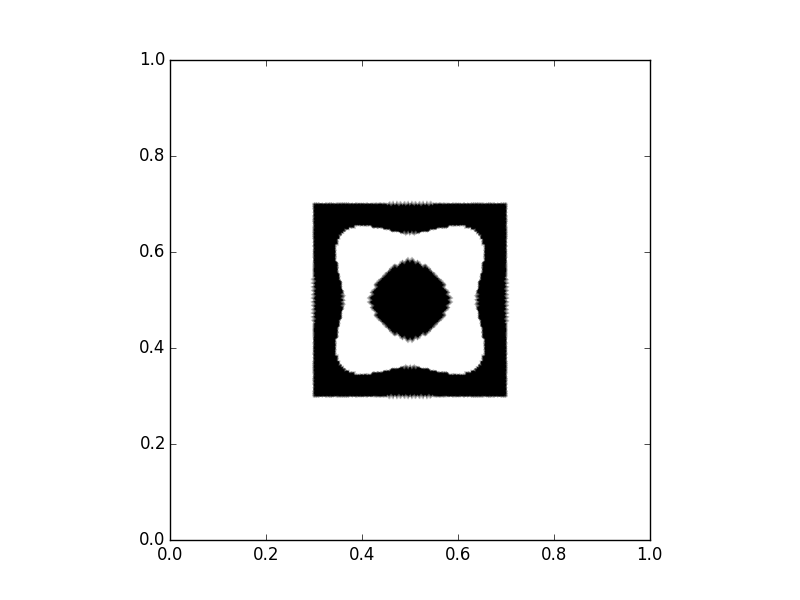}
        \caption{$\epsilon = 10^{-5}$}
    \end{subfigure}
    \begin{subfigure}{0.49\textwidth}
        \includegraphics[width=\textwidth,trim={2.5cm 0.5cm 2.5cm 0.5cm},clip]{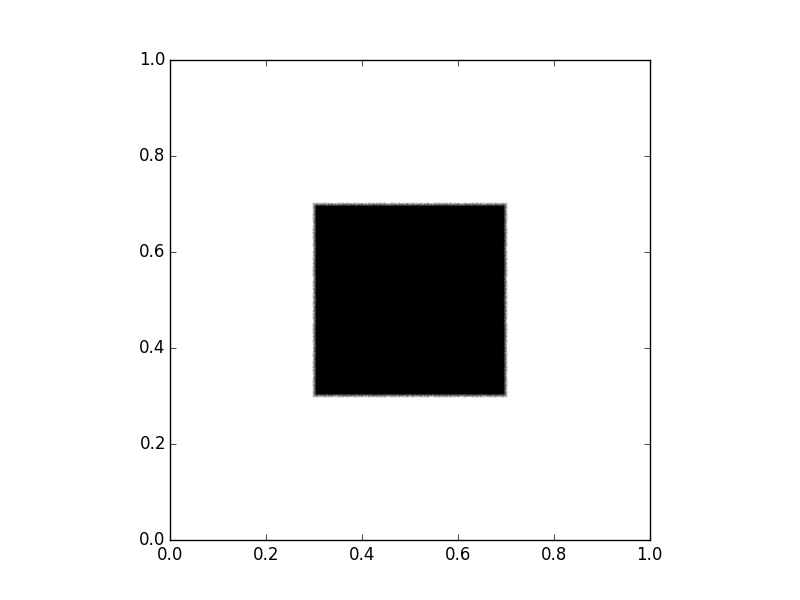}
        \caption{$\epsilon = 10^{-4}$}
    \end{subfigure}
    \begin{subfigure}{0.49\textwidth}
        \includegraphics[width=\textwidth,trim={2.5cm 0.5cm 2.5cm 0.5cm},clip]{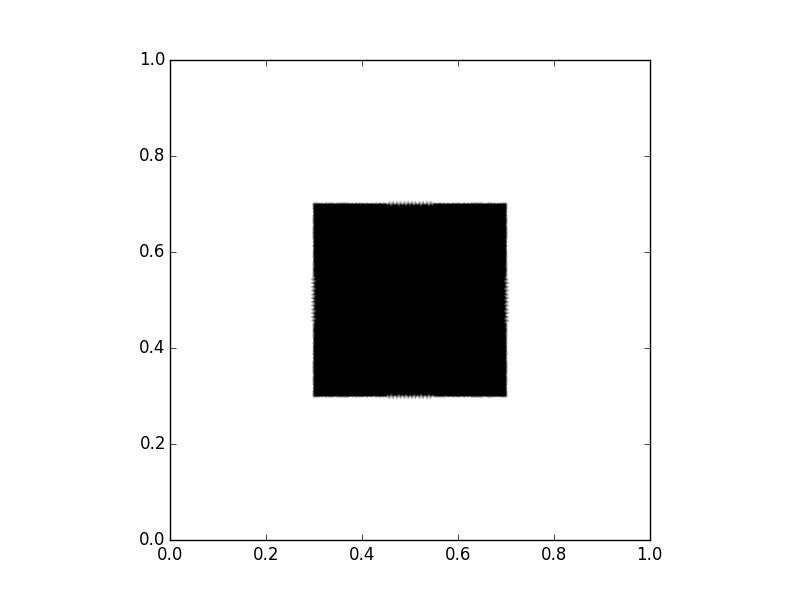}
        \caption{$\epsilon = 10^{-3}$}
    \end{subfigure}
    \caption{The approximate contact sets (i.e.~the regions where the discrete Lagrange multipliers are positive)
    after three adaptive refinements in each of the cases.}
    \label{fig:contactset}
\end{figure}

\begin{figure}
    \includegraphics[width=0.3\textwidth]{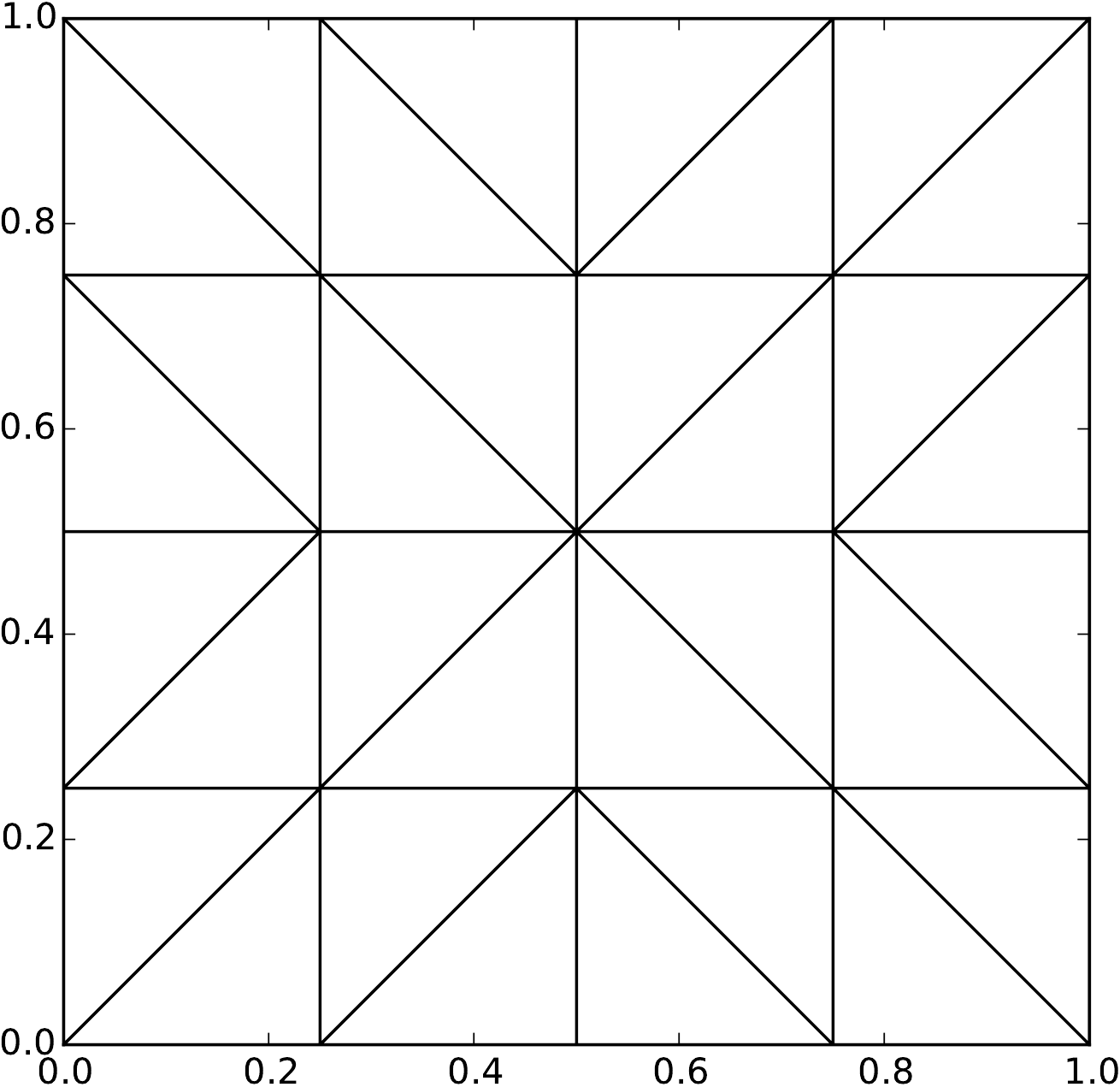}
    \includegraphics[width=0.3\textwidth]{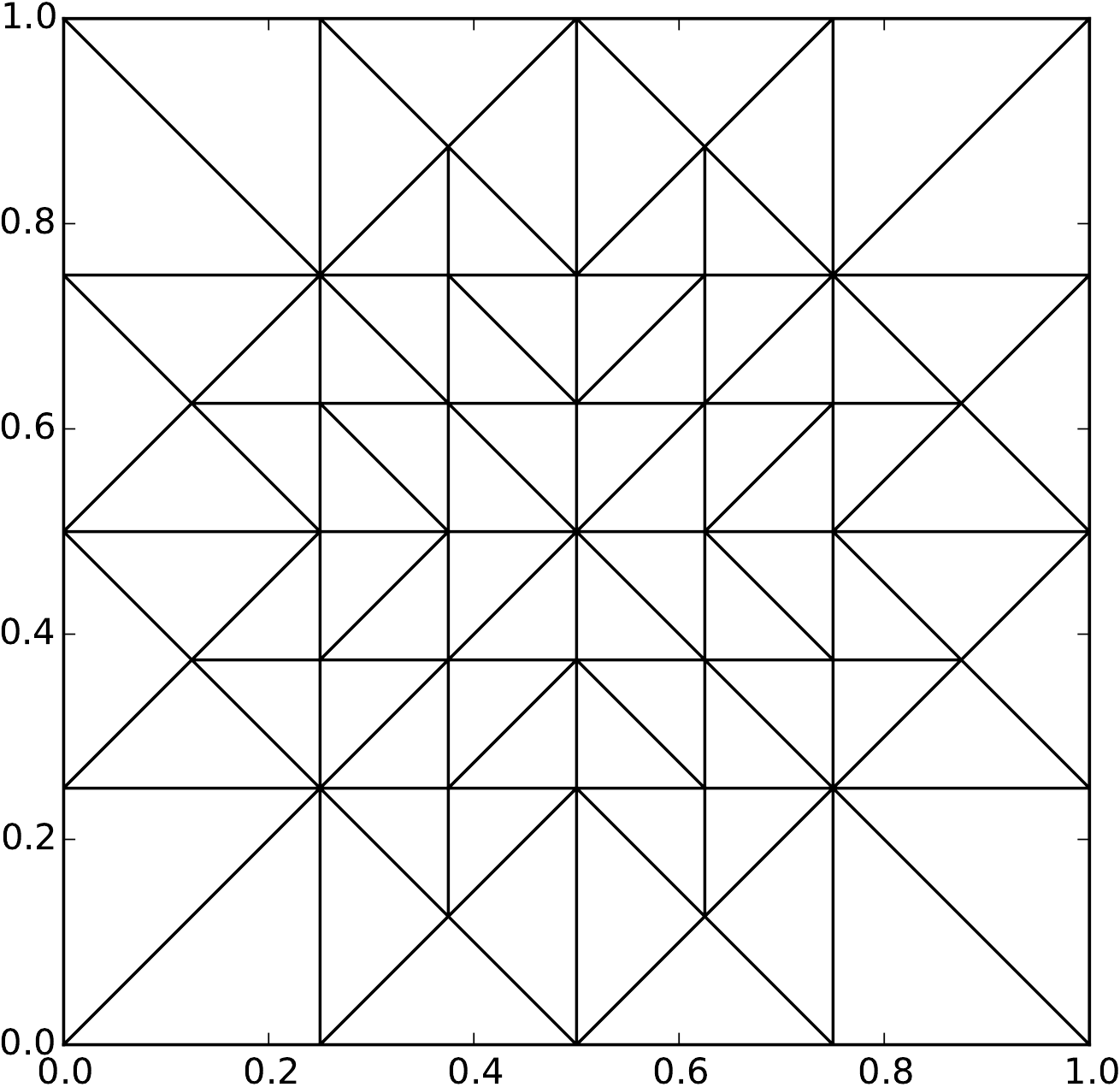}
    \includegraphics[width=0.3\textwidth]{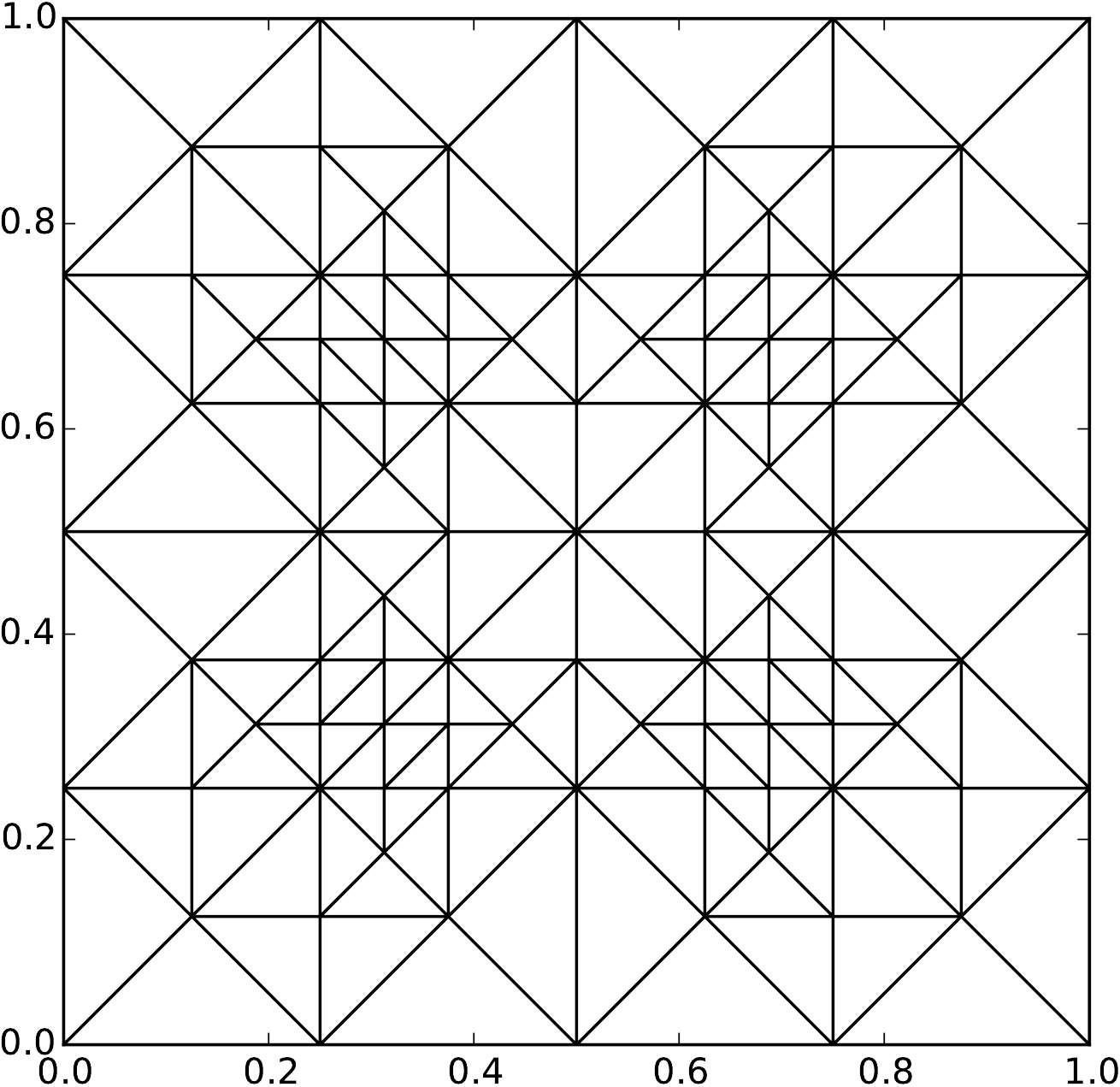}
    \includegraphics[width=0.3\textwidth]{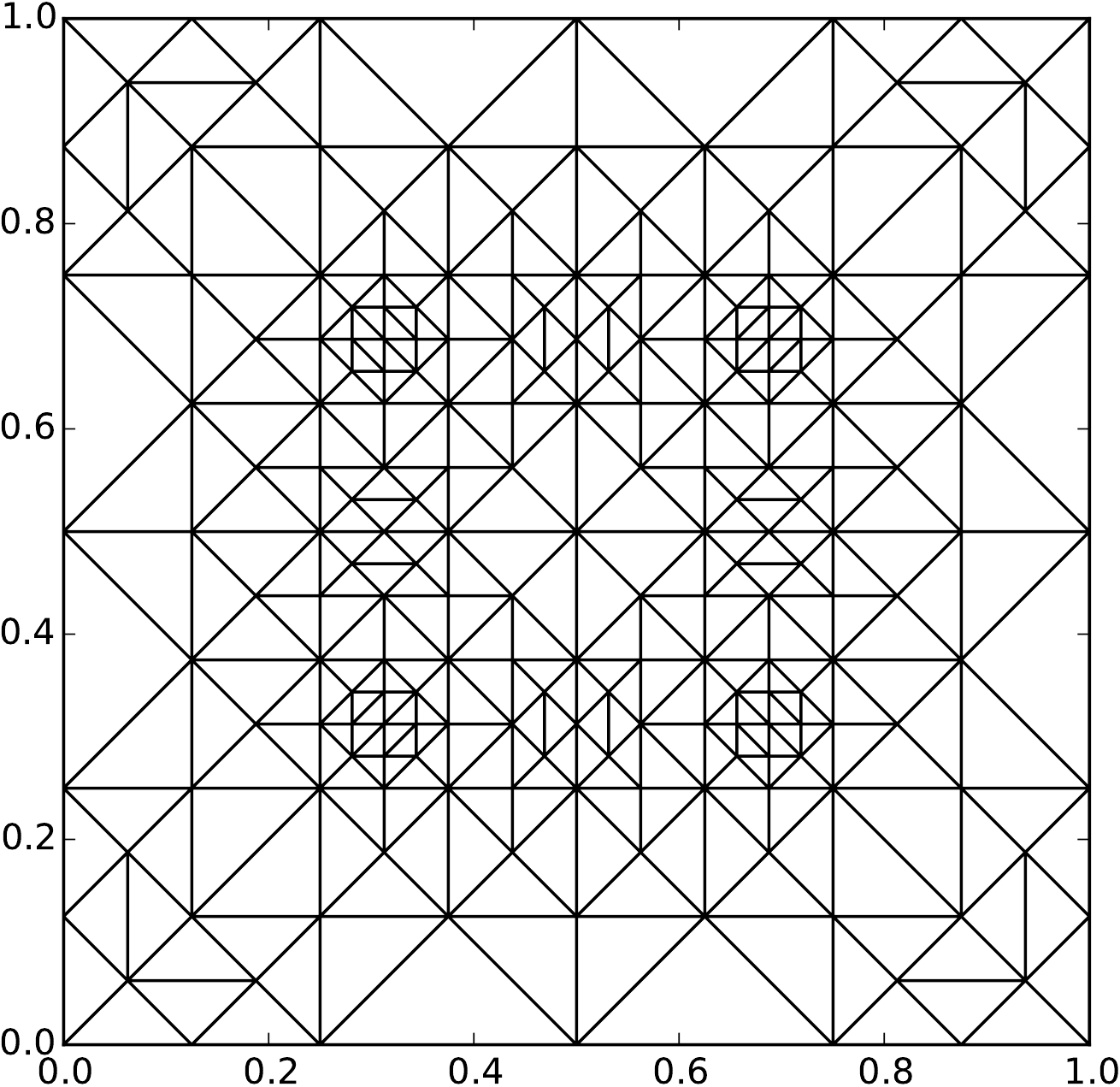}
            \quad  \ \ \ 
    \includegraphics[width=0.3\textwidth]{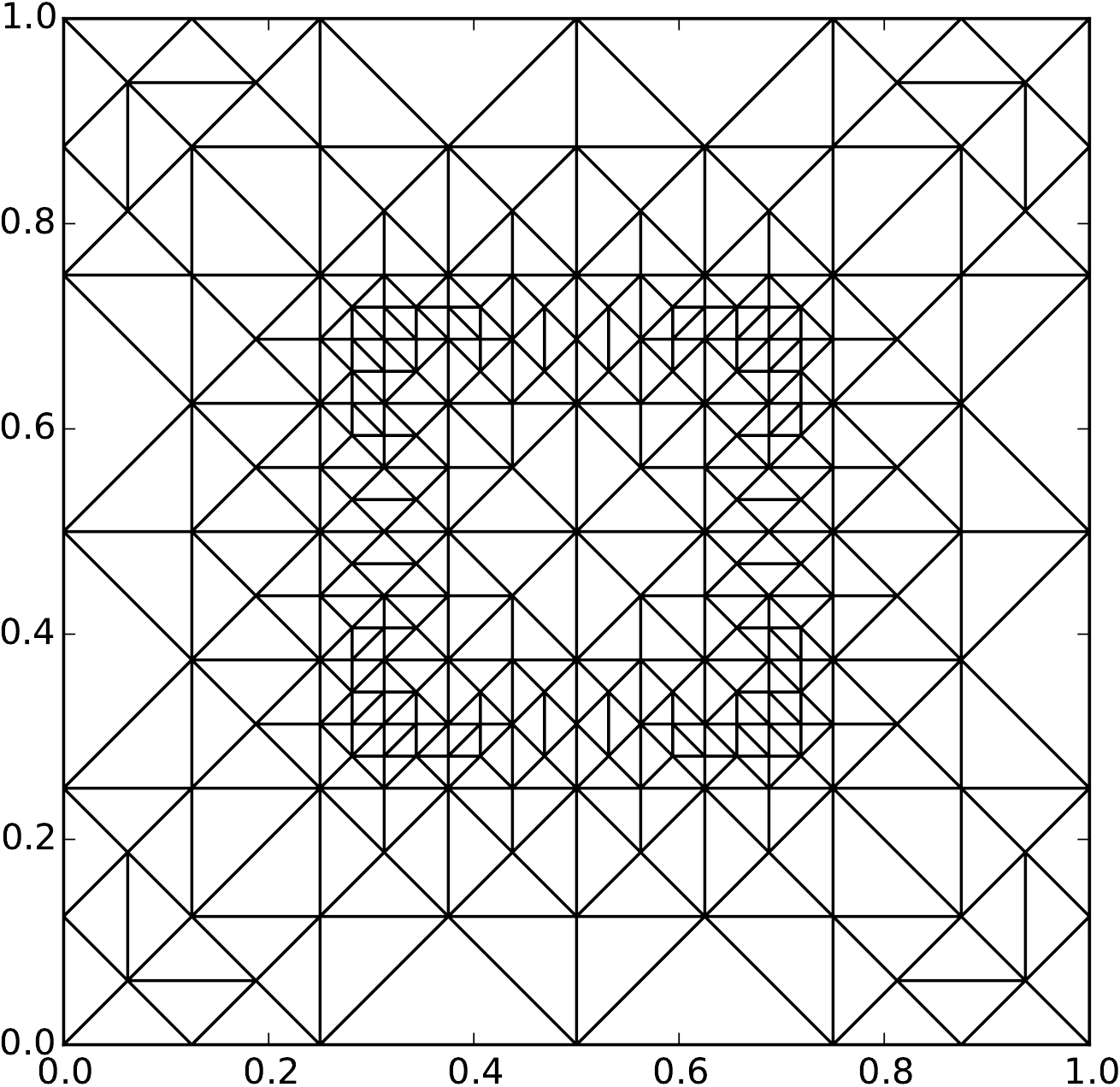}
            \quad  \ \ \ 
    \includegraphics[width=0.3\textwidth]{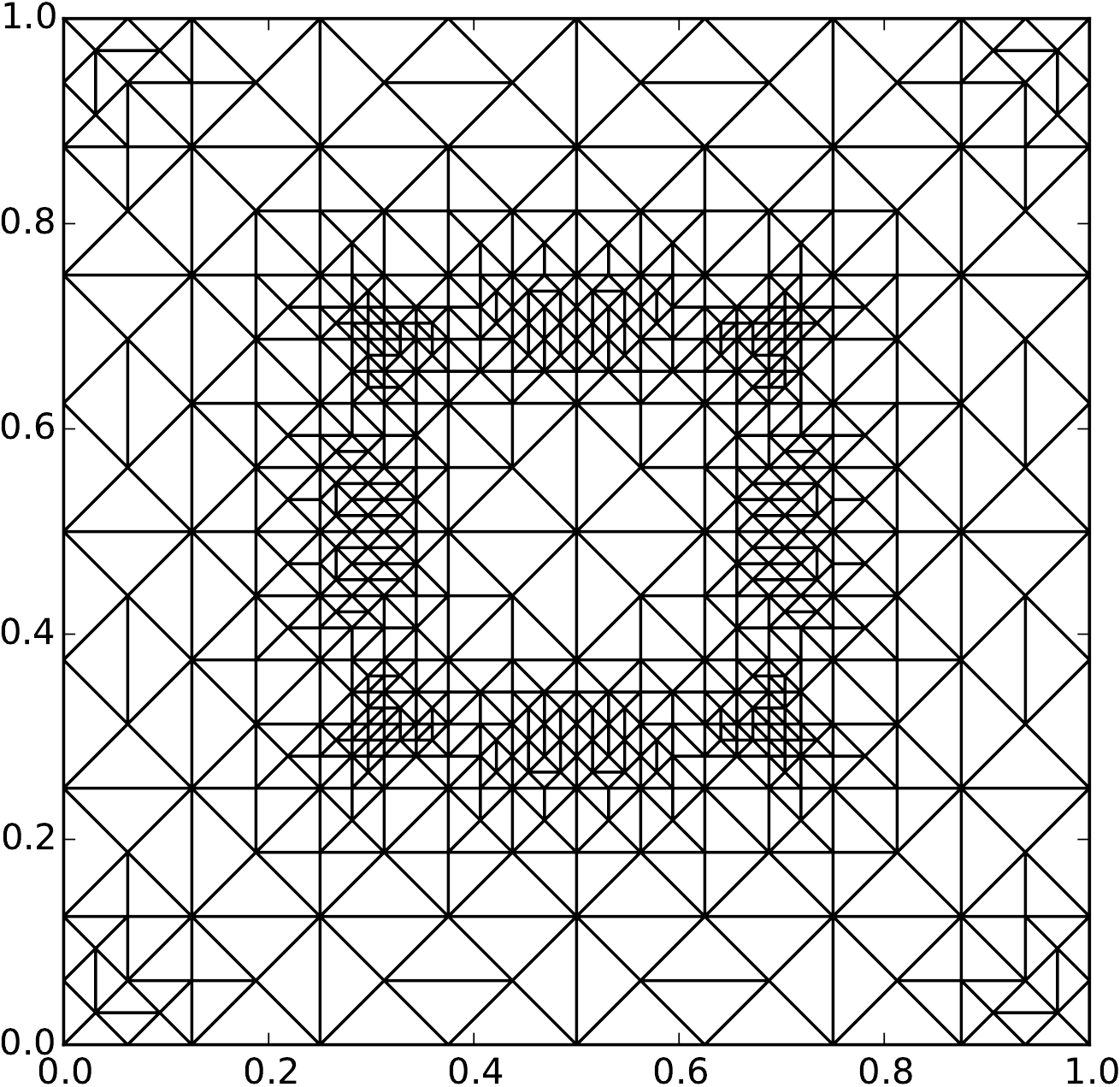}
    \caption{The sequence of adaptive meshes in  the elastic obstacle case with $\epsilon = 10^{-6}$.}
    \label{fig:meshe6}
\end{figure}
\begin{figure}
    \includegraphics[width=0.3\textwidth]{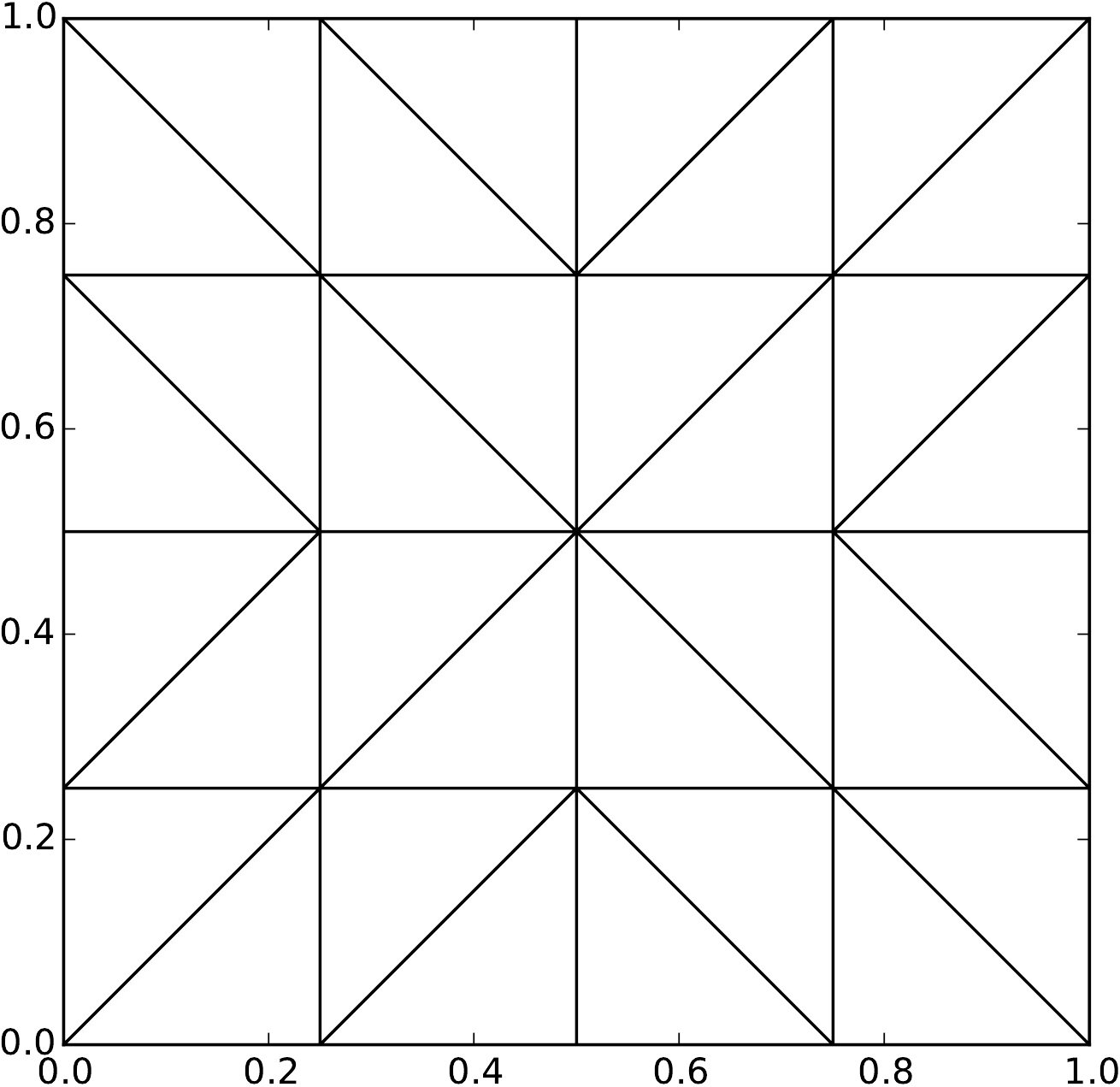}
    \includegraphics[width=0.3\textwidth]{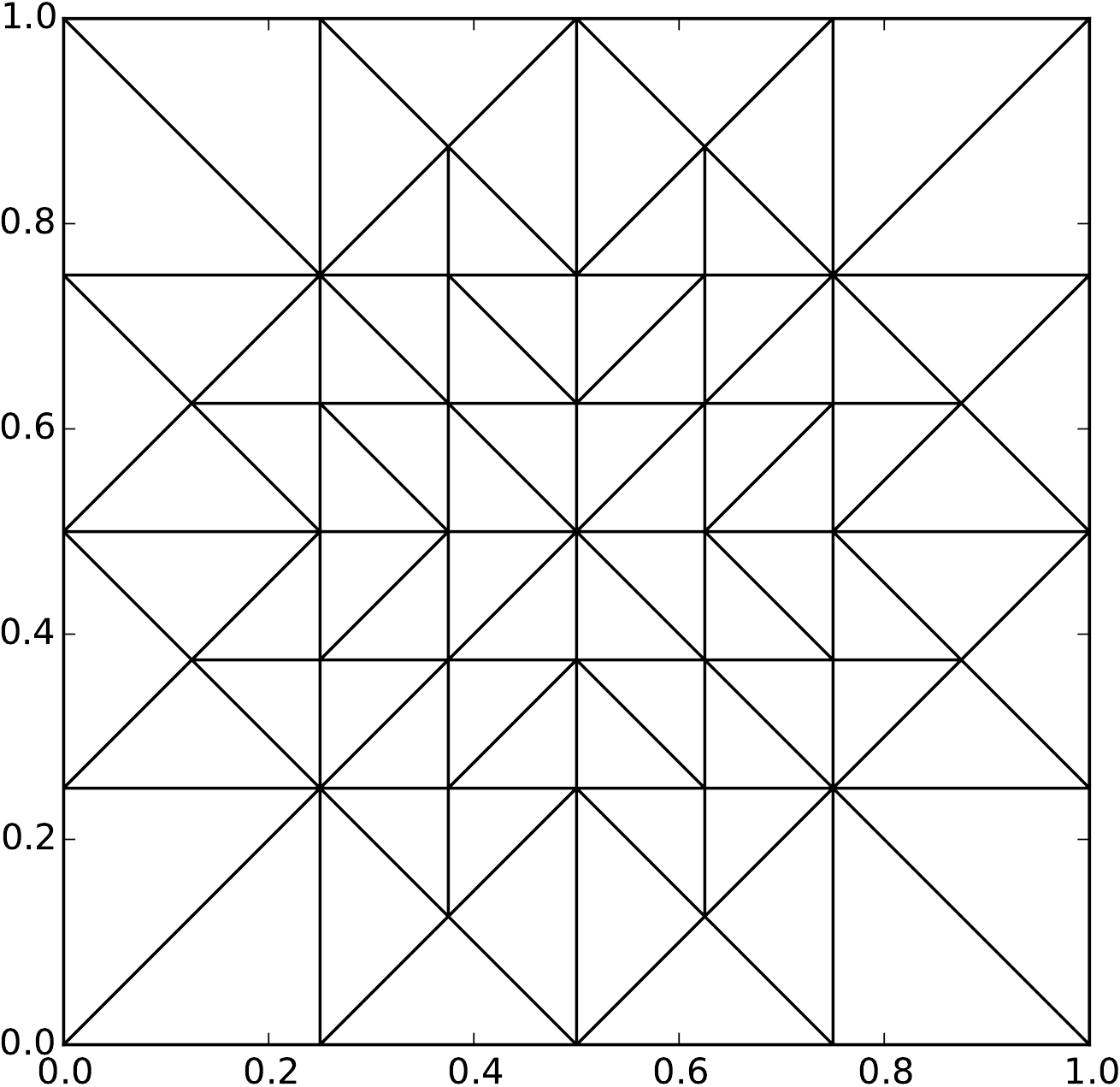}
    \includegraphics[width=0.3\textwidth]{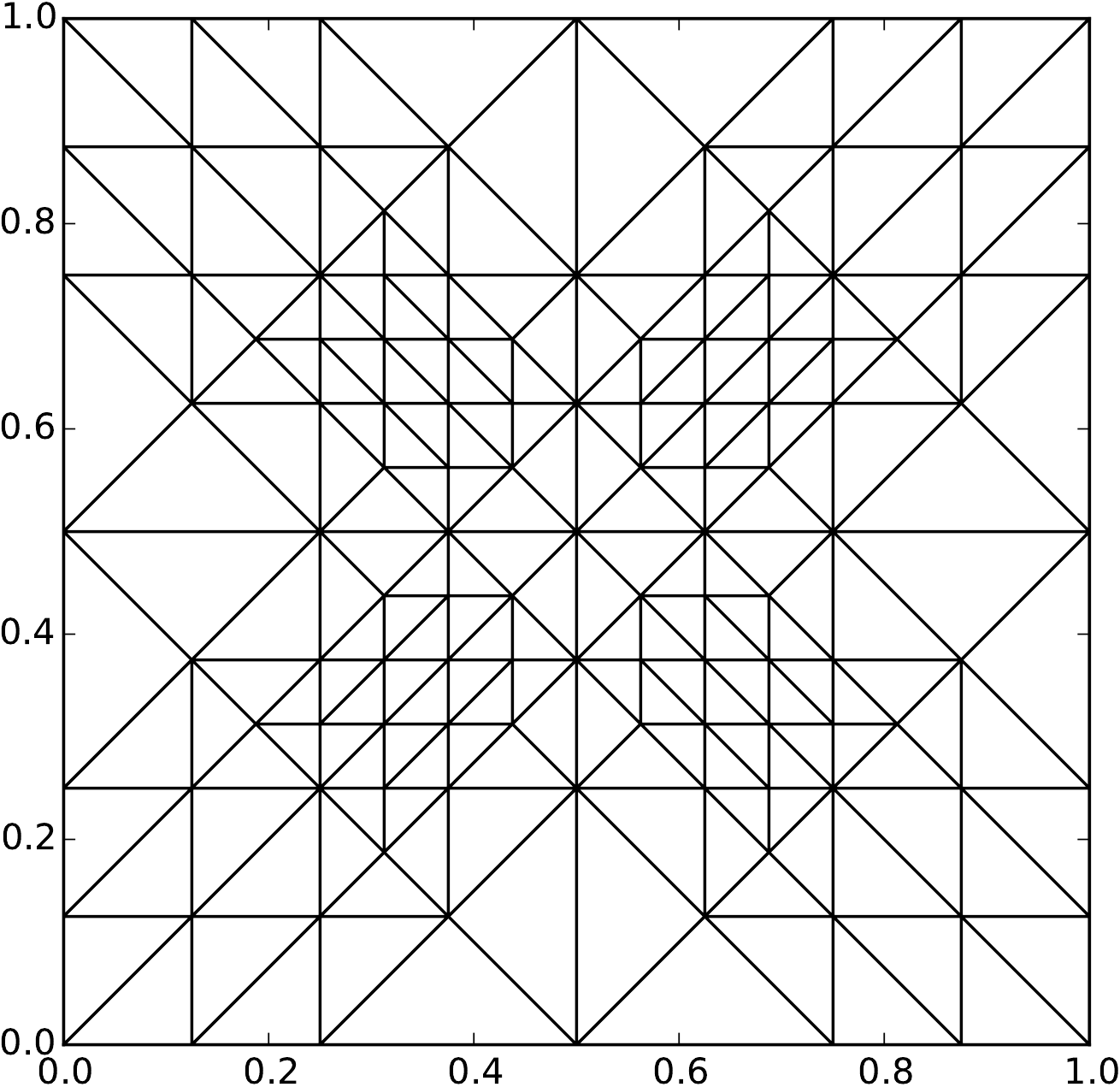}
    \includegraphics[width=0.3\textwidth]{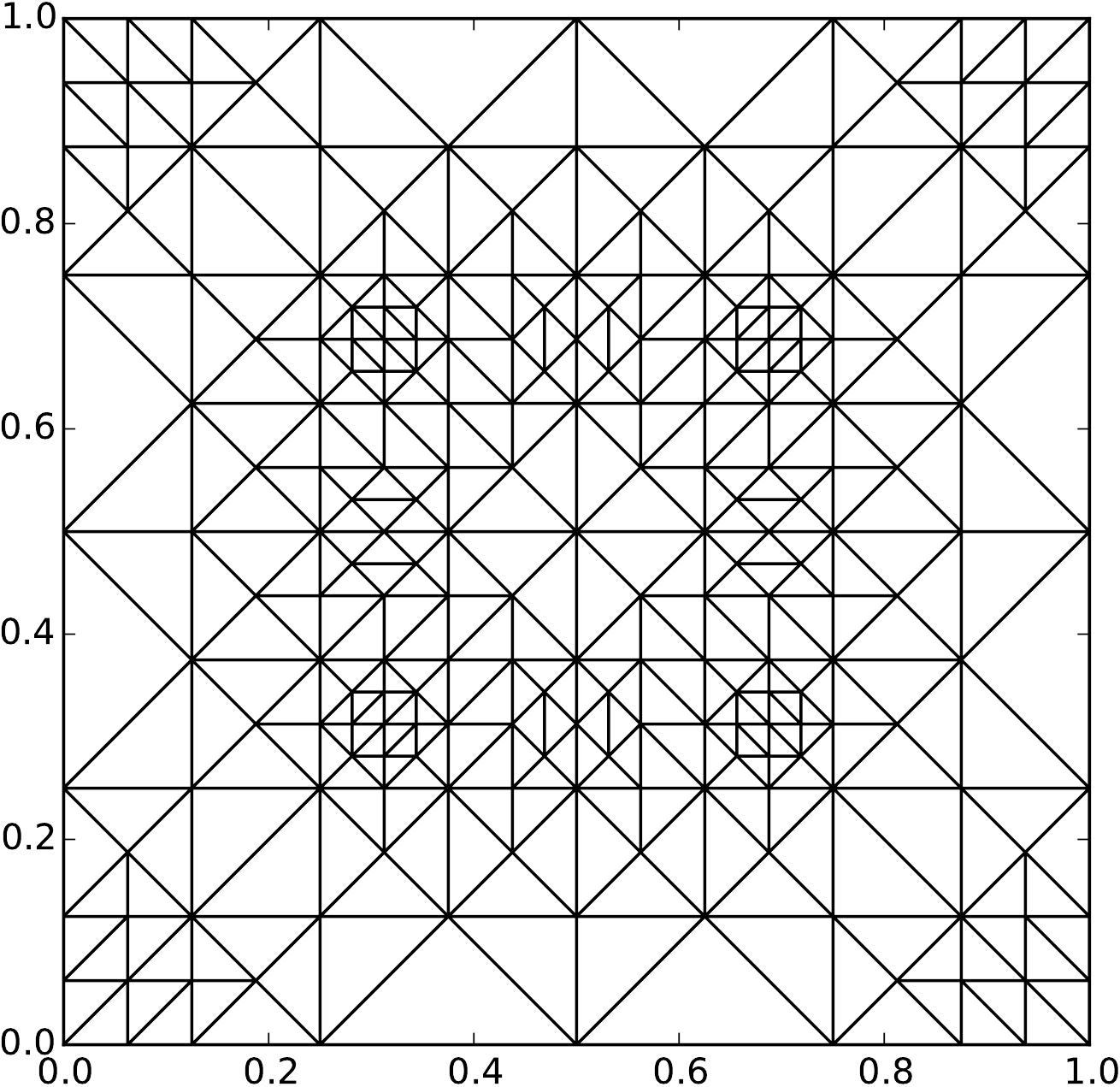}
    \quad  \ \  \ 
    \includegraphics[width=0.3\textwidth]{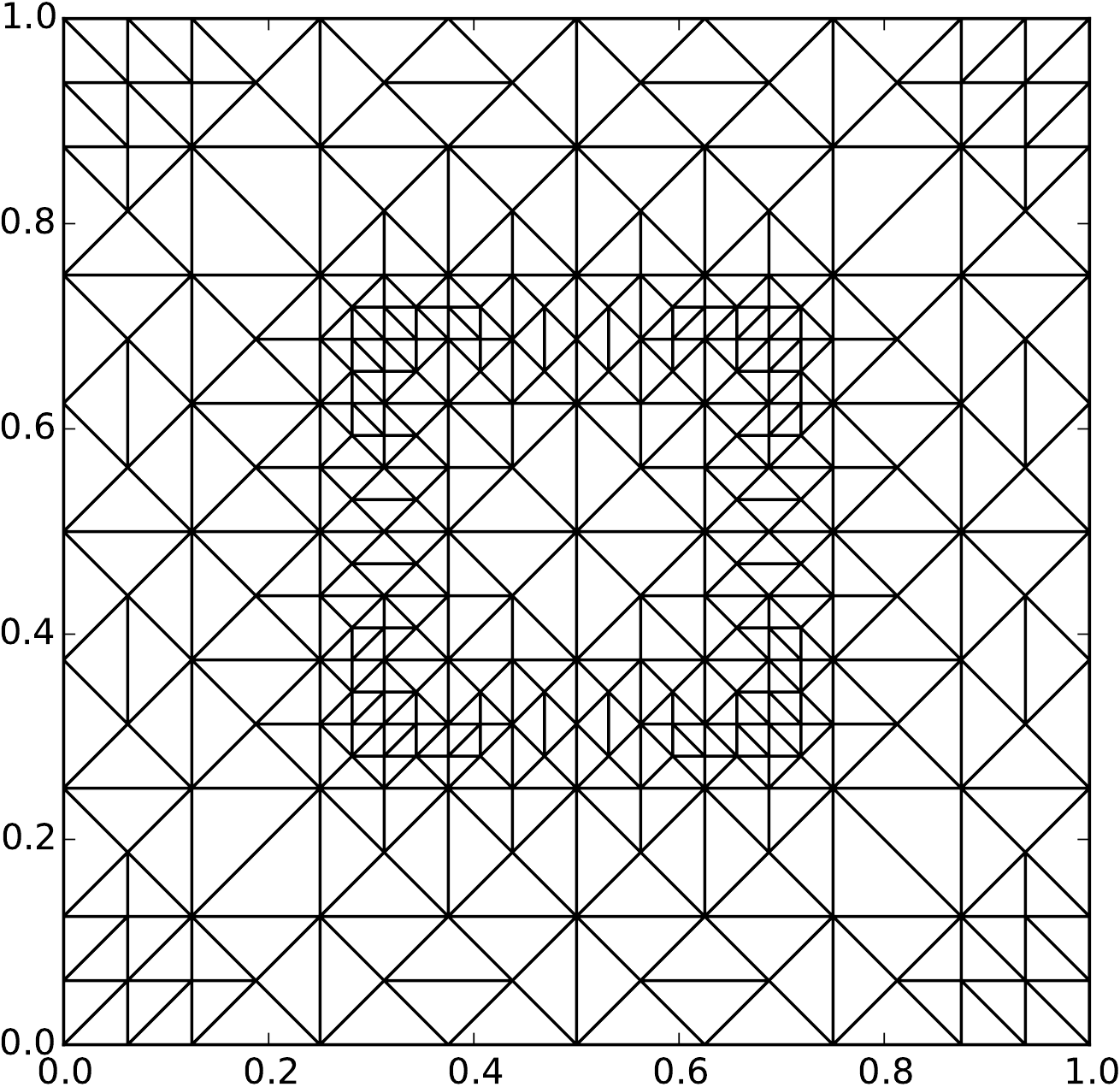}
        \quad  \ \ \ 
    \includegraphics[width=0.3\textwidth]{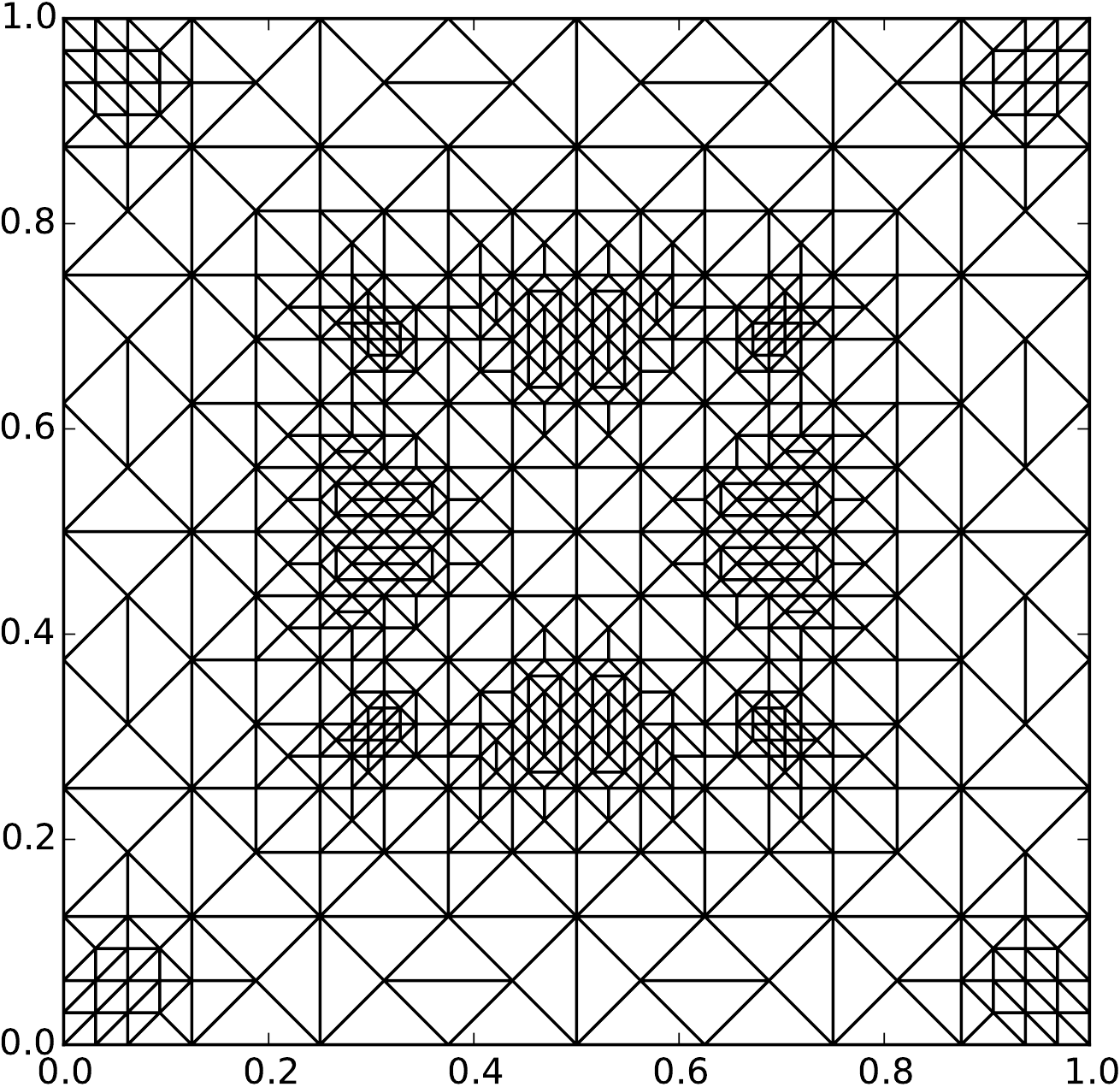}
    \caption{The sequence of adaptive meshes in  the elastic obstacle case with $\epsilon = 10^{-5}$.}
    \label{fig:meshe5}
\end{figure}
\begin{figure}
    \includegraphics[width=0.3\textwidth]{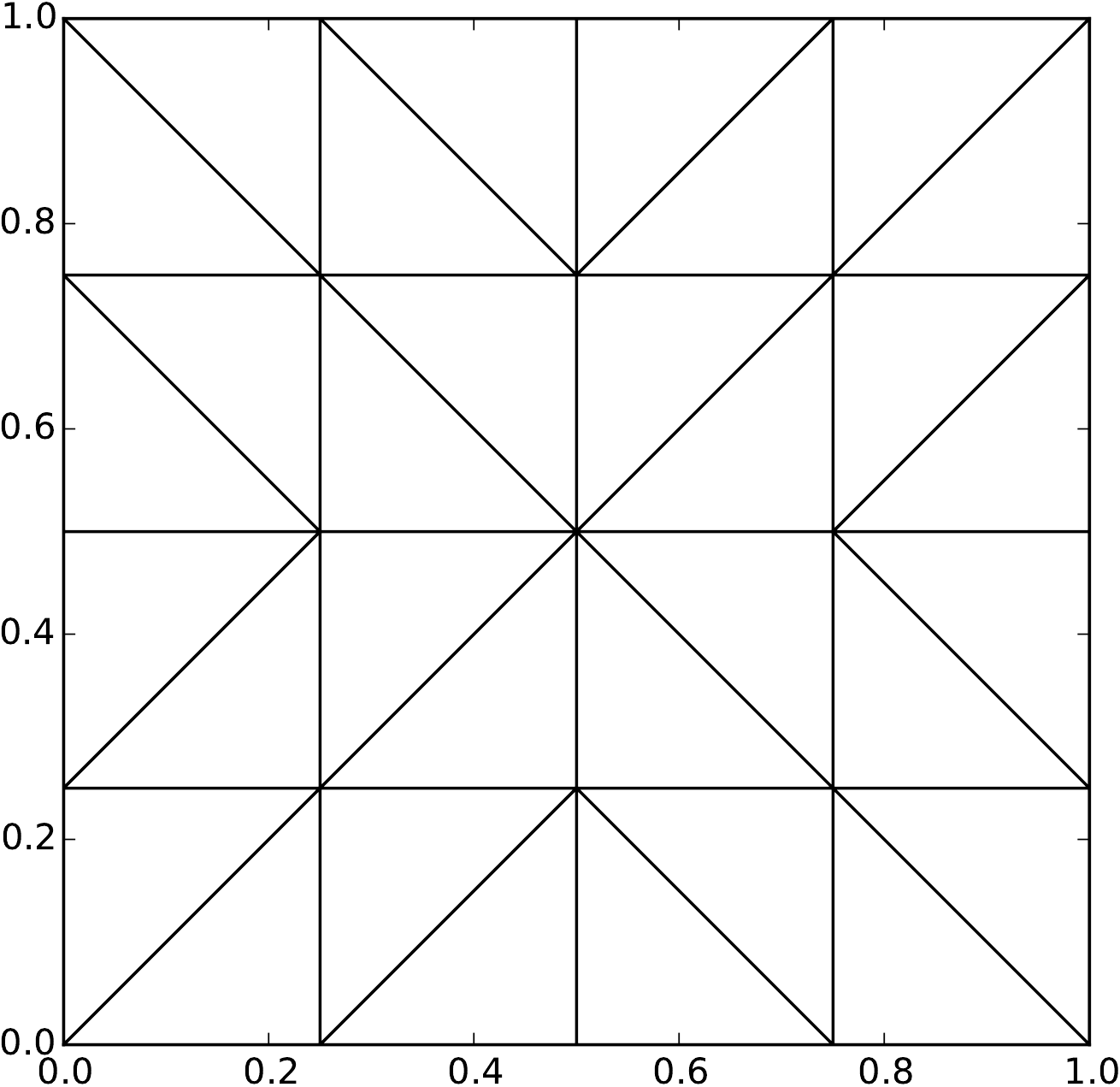}
    \includegraphics[width=0.3\textwidth]{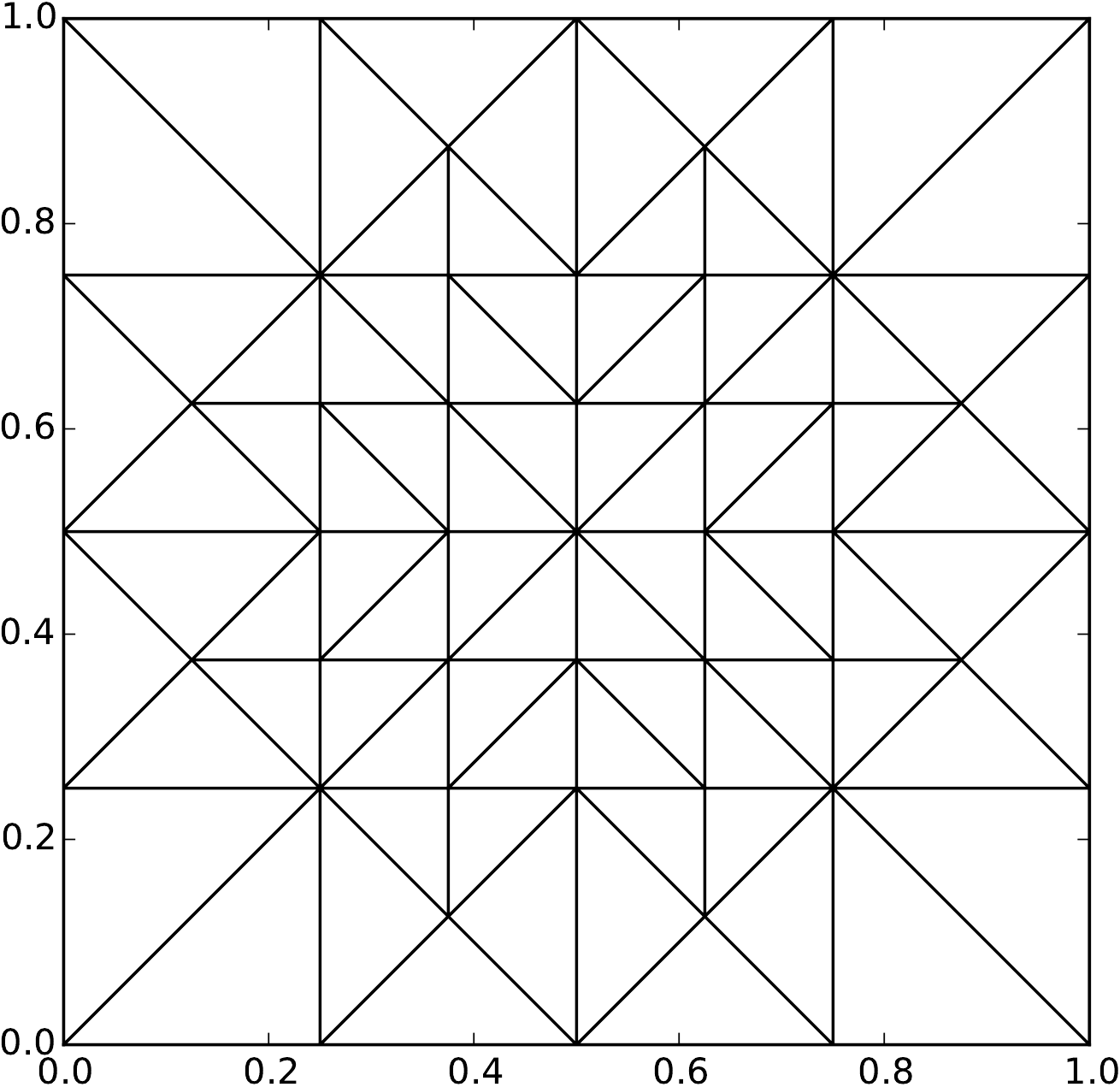}
    \includegraphics[width=0.3\textwidth]{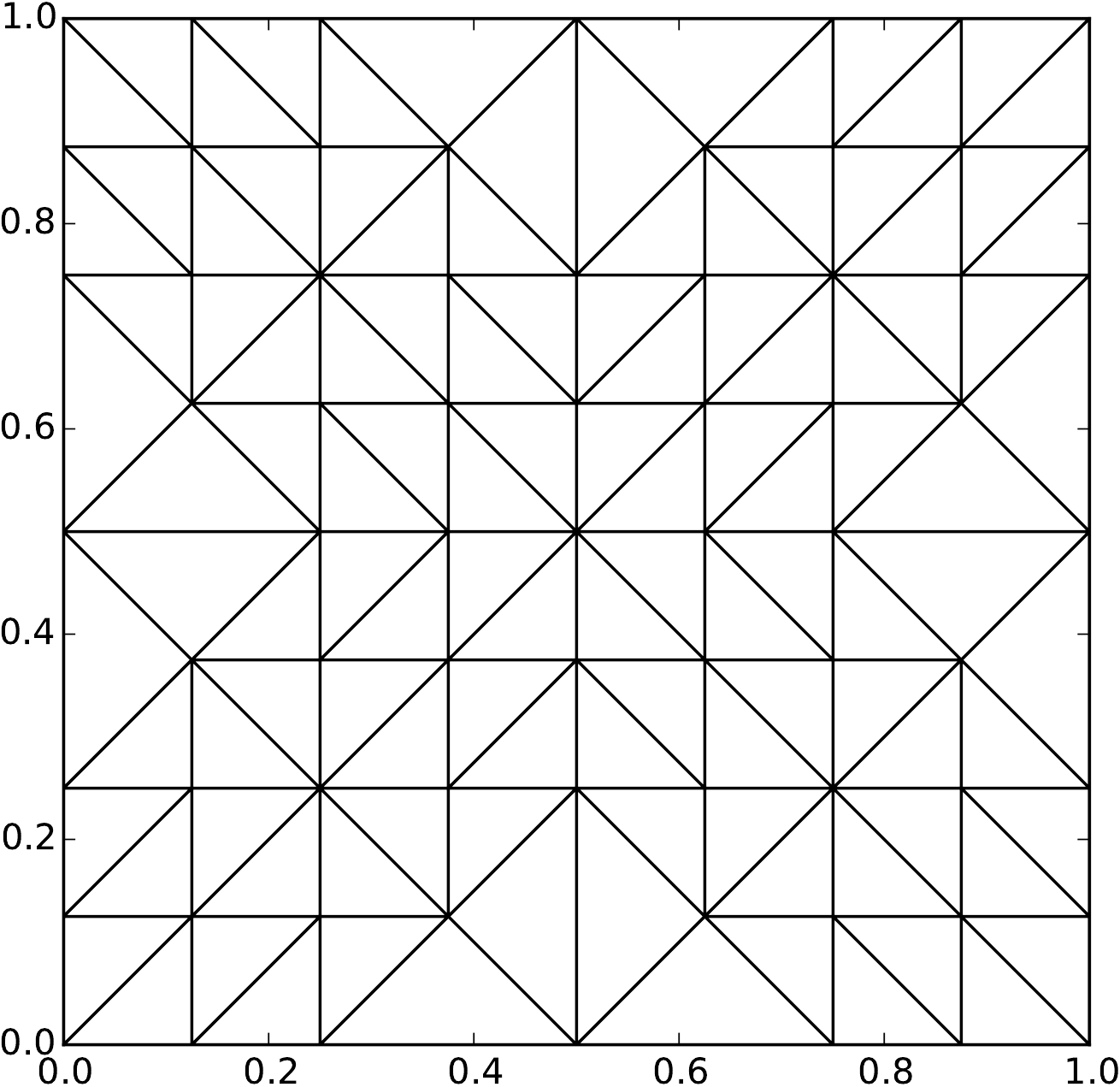}
    \includegraphics[width=0.3\textwidth]{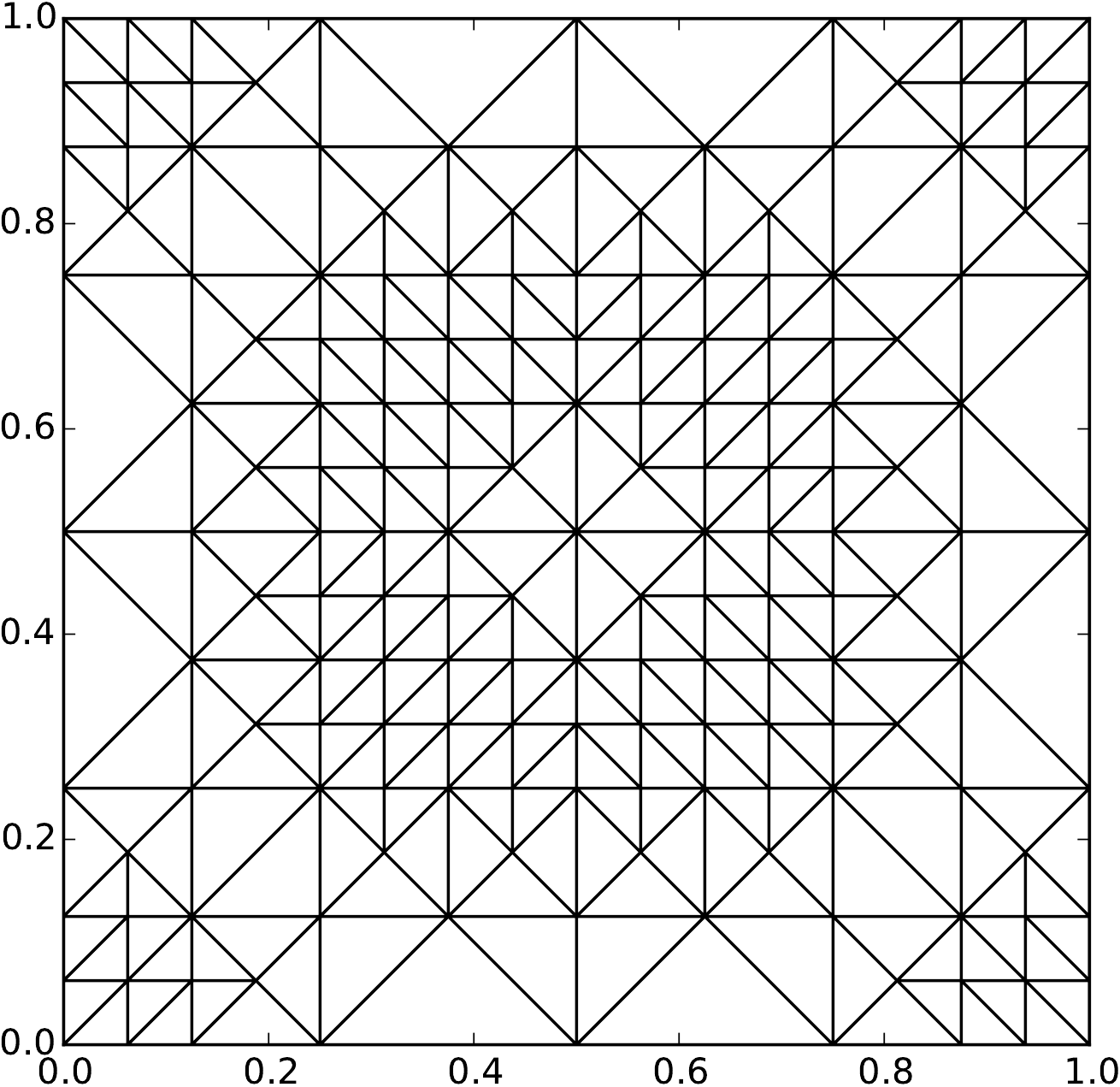}
            \quad  \ \ \ 
    \includegraphics[width=0.3\textwidth]{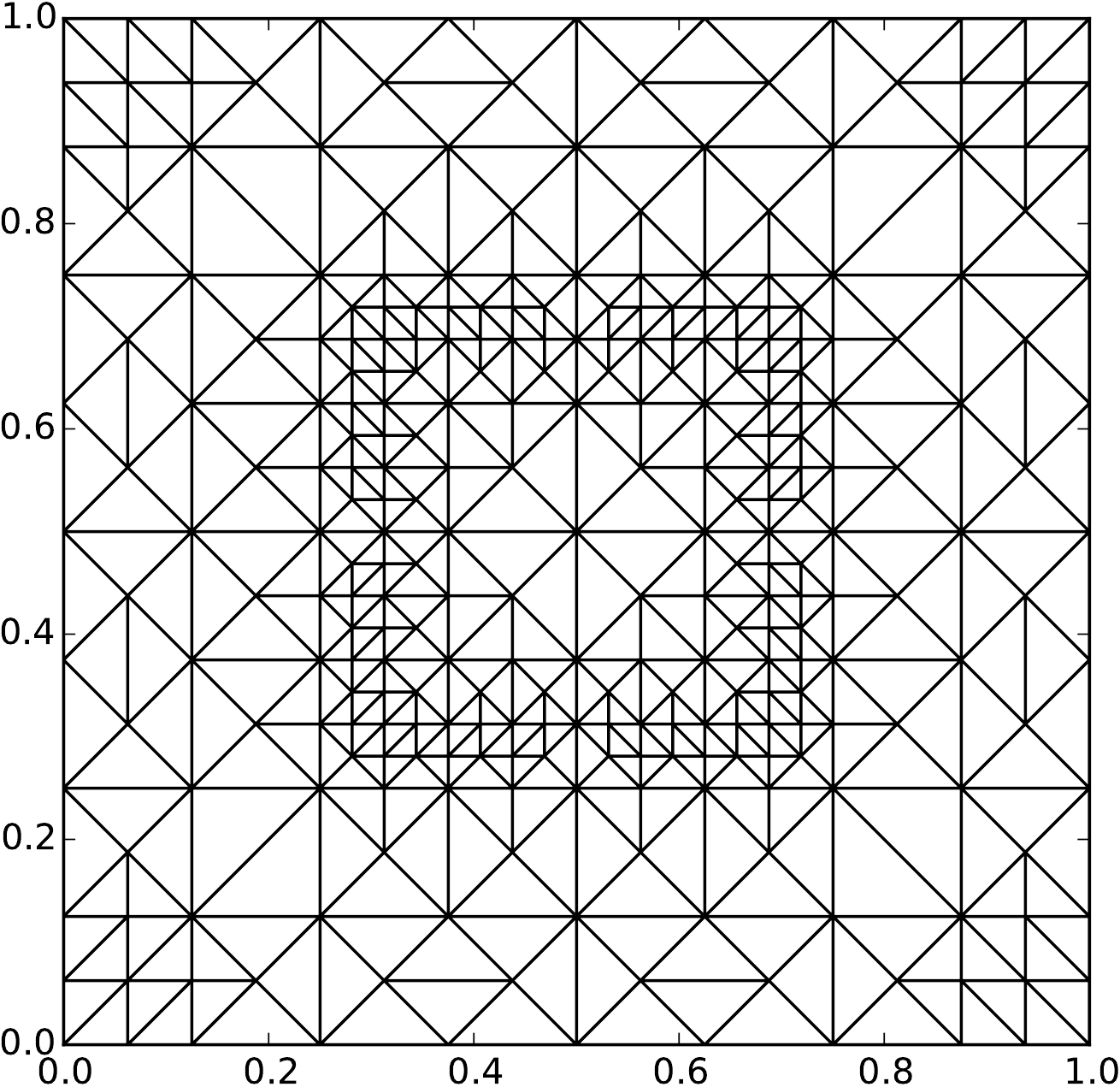}
            \quad  \ \ \ 
    \includegraphics[width=0.3\textwidth]{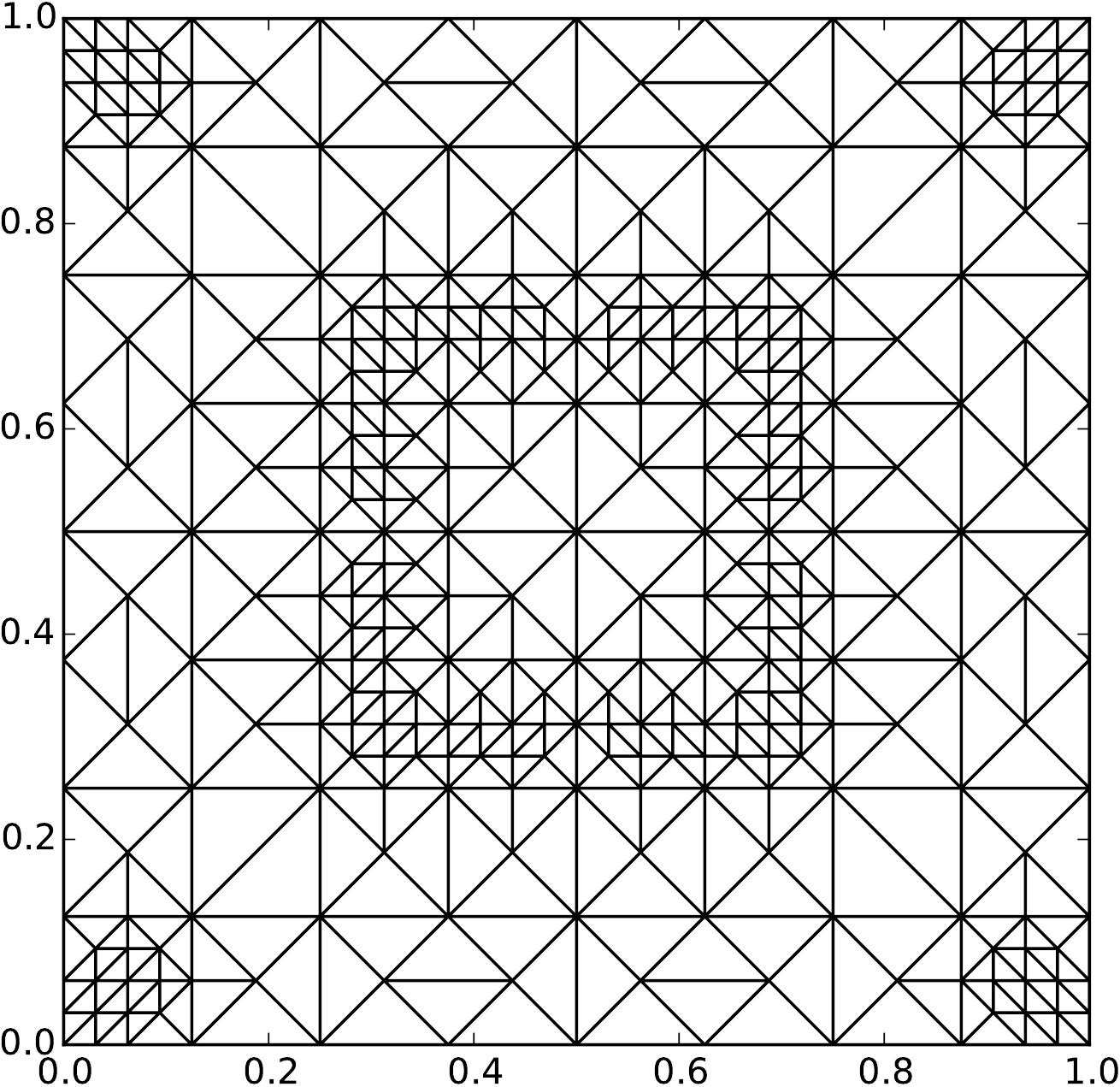}
    \caption{The sequence of adaptive meshes in  the elastic obstacle case with $\epsilon = 10^{-4}$.}
    \label{fig:meshe4}
\end{figure}
\begin{figure}
    \includegraphics[width=0.3\textwidth]{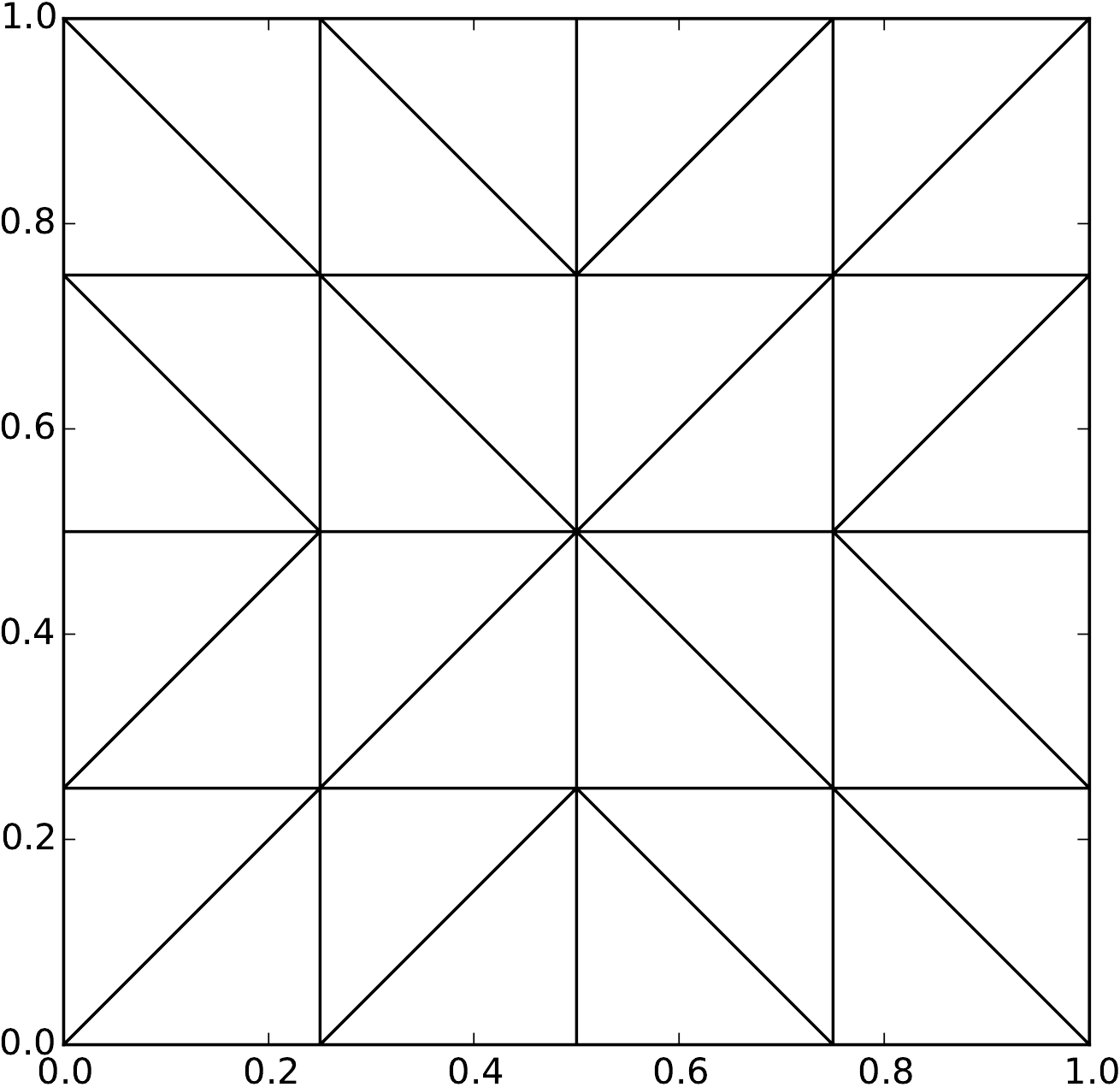}
    \includegraphics[width=0.3\textwidth]{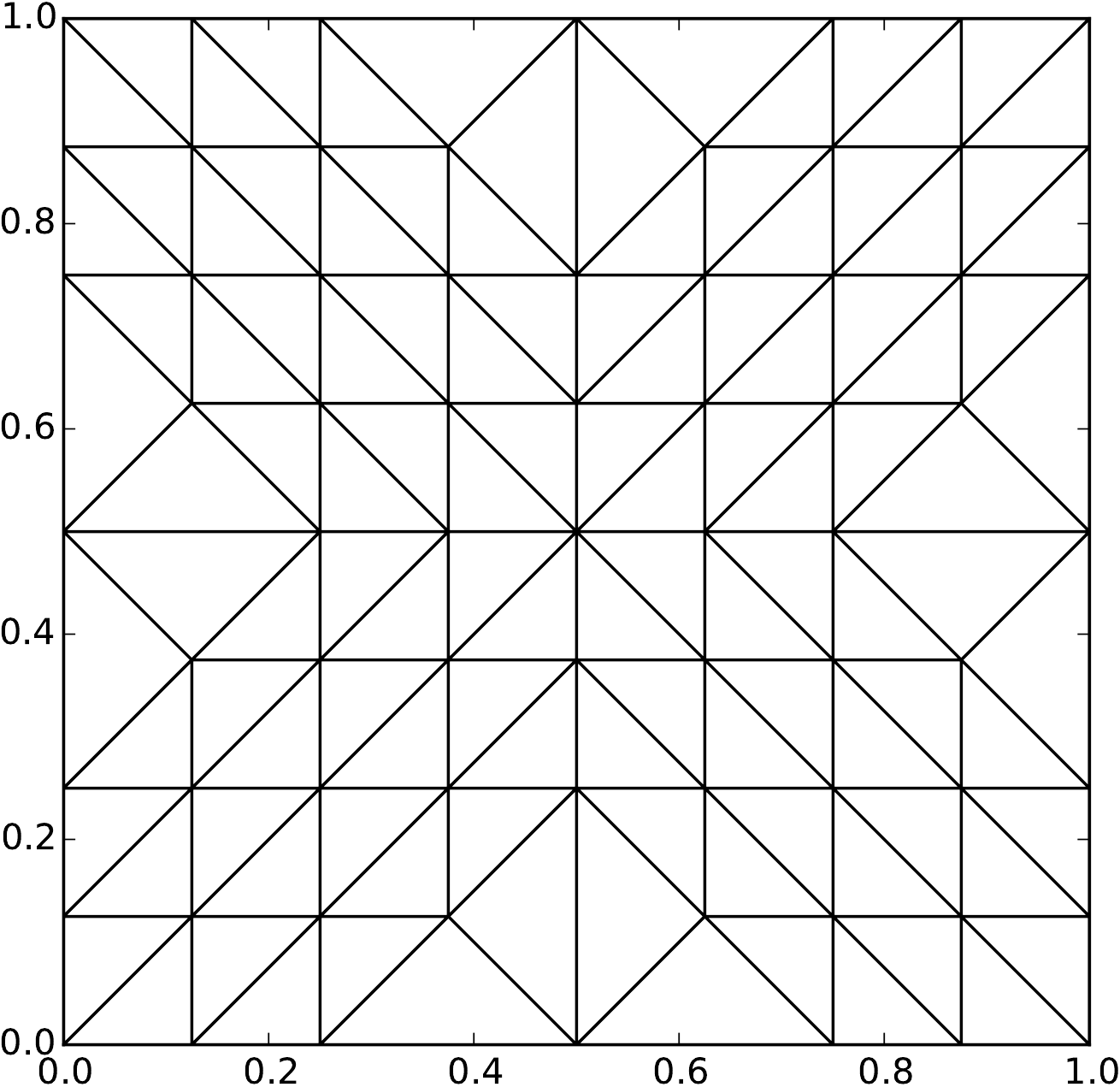}
    \includegraphics[width=0.3\textwidth]{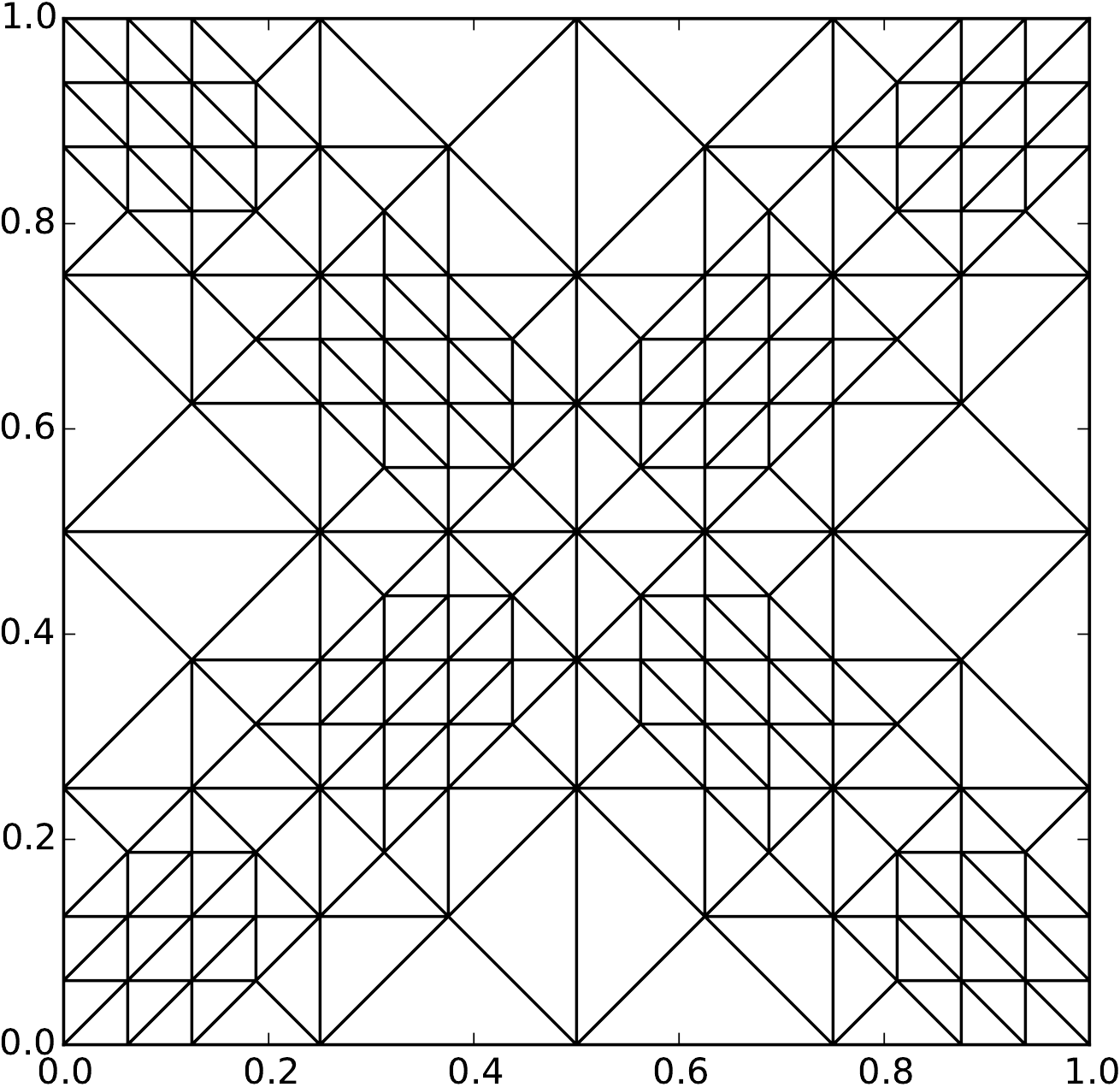}
    \includegraphics[width=0.3\textwidth]{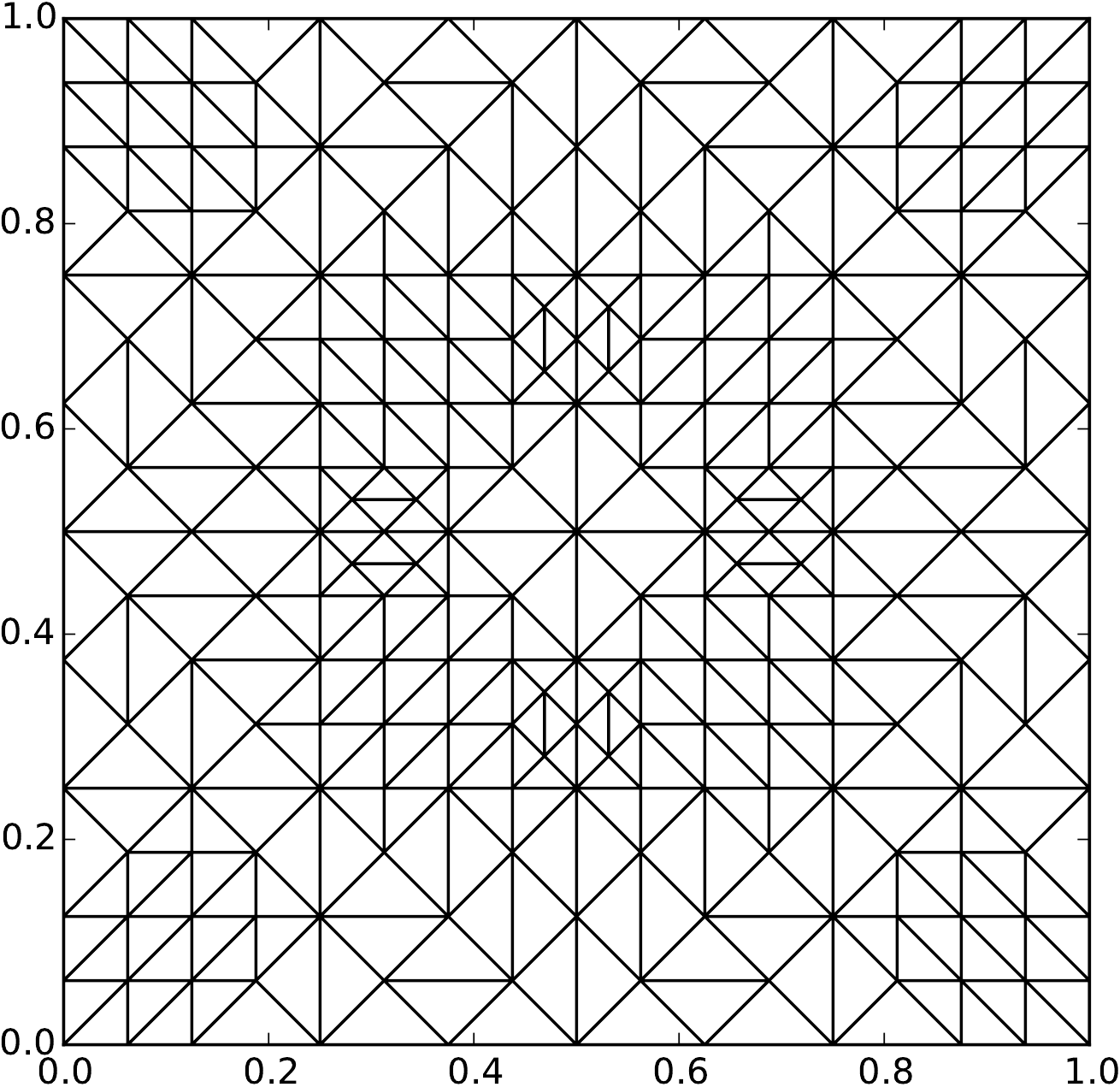}
            \quad  \ \ \ 
    \includegraphics[width=0.3\textwidth]{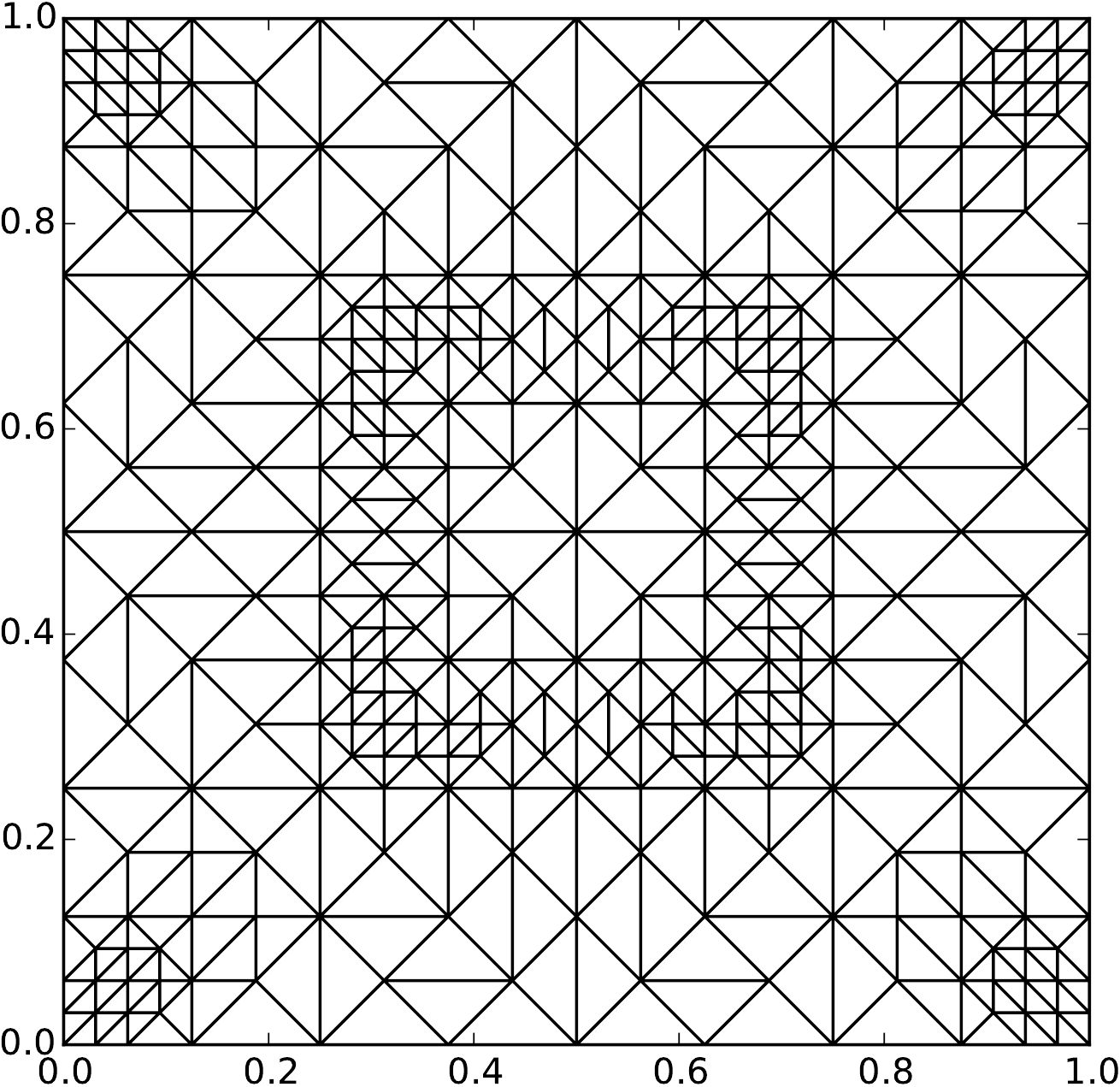}
            \quad  \ \ \ 
    \includegraphics[width=0.3\textwidth]{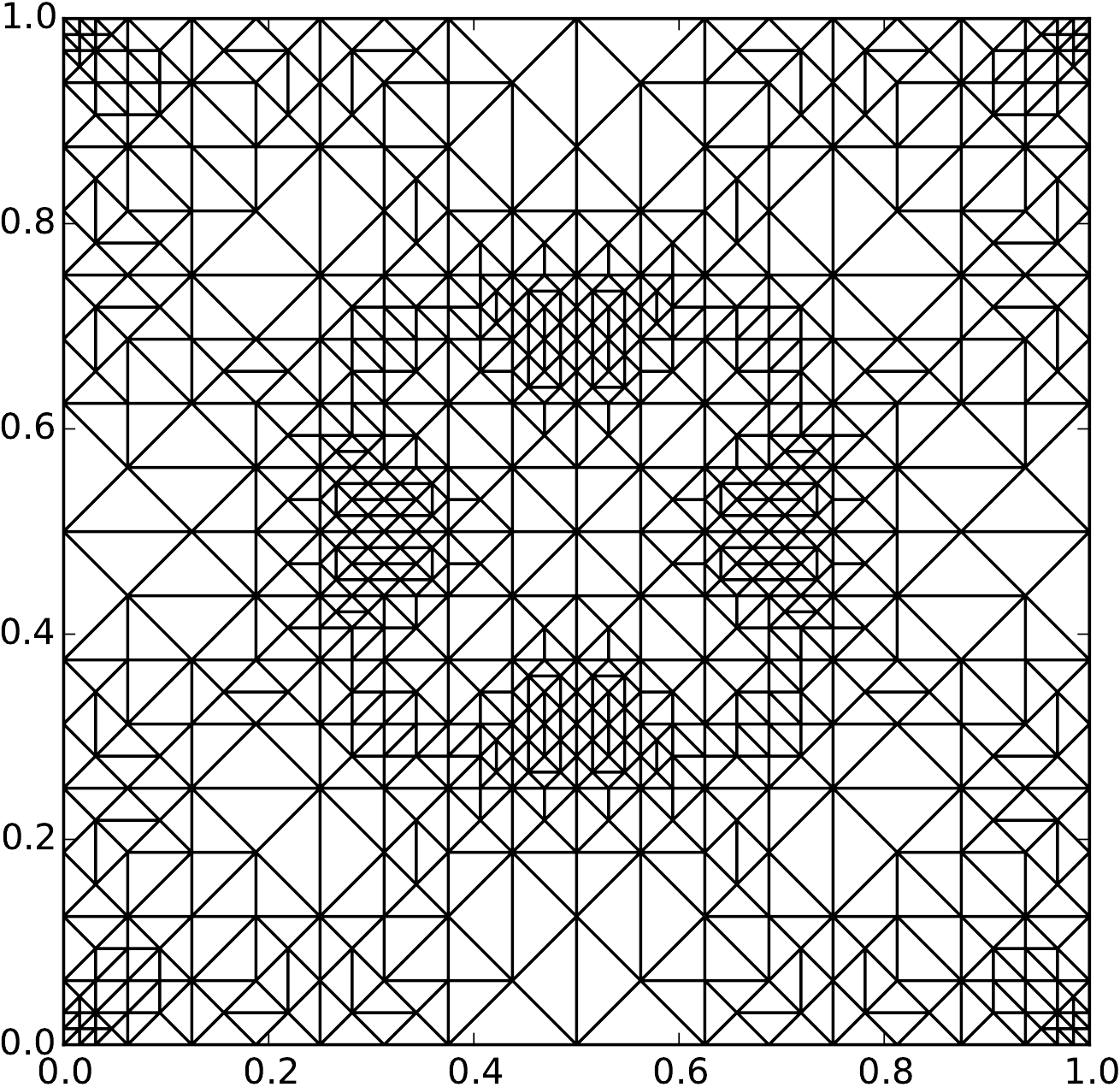}
    \caption{The sequence of adaptive meshes in  the elastic obstacle case with $\epsilon = 10^{-3}$.}
    \label{fig:meshe3}
\end{figure}


\bibliographystyle{spmpsci}      
\bibliography{PlateObst}

\end{document}